\newcommand{\zarcon}{{\mathrel{{+}\!\!\!\!\circ}}}
\definecolor{darkred}{rgb}{0.4,0,0}
\definecolor{darkgreen}{rgb}{0,0.5,0}
\definecolor{darkblue}{rgb}{0,0,0.4}
\newcommand{\Sp}{\operatorname{Sp}}
\newcommand{\GL}{\operatorname{GL}}
\newcommand{\SL}{\operatorname{SL}}
\newcommand{\Aut}{\operatorname{Aut}}
\newcommand{\Hom}{\operatorname{Hom}}
\newcommand{\Stab}{\operatorname{Stab}}
\newcommand{\boldx}{\mathbf{x}}
\newcommand{\boldy}{\mathbf{y}}
\newcommand{\Gr}{\operatorname{Gr}}
\newcommand{\onto}{\twoheadrightarrow}
\renewcommand{\paragraph}{%
\@startsection {paragraph}{4}
{\z@} \z@ {-\fontdimen 2\font }\bfseries
}
\def\@cite#1#2{{\m@th\upshape\bfseries%
[{#1\if@tempswa{\m@th\upshape\mdseries, #2}\fi}]}}
\numberwithin{equation}{section}
\theoremstyle{plain}
\newtheorem{thm}[equation]{Theorem}
\newtheorem{cor}[equation]{Corollary}
\newtheorem{prop}[equation]{Proposition}
\newtheorem{lem}[equation]{Lemma}
\theoremstyle{definition}
\newtheorem{definition}[equation]{Definition}
\theoremstyle{remark}
\newtheorem{remark}[equation]{Remark}
\renewcommand{\bold}[1]{\medskip \noindent {\bf #1 }\nopagebreak}
\newcommand{\nc}{\newcommand}
\newcommand{\rnc}{\renewcommand}
\newcommand{\ann}[1]{}
\nc\bA{\mathbb{A}}
\nc\bB{\mathbb{B}}
\nc\bC{\mathbb{C}}
\nc\bD{\mathbb{D}}
\nc\bE{\mathbb{E}}
\nc\bF{\mathbb{F}}
\nc\bG{\mathbb{G}}
\nc\bH{\mathbb{H}}
\nc\bI{\mathbb{I}}
\nc{\bJ}{\mathbb{J}} 
\nc\bK{\mathbb{K}}
\nc\bL{\mathbb{L}}
\nc\bM{\mathbb{M}}
\nc\bN{\mathbb{N}}
\nc\bO{\mathbb{O}}
\nc\bP{\mathbb{P}}
\nc\bQ{\mathbb{Q}}
\nc\bR{\mathbb{R}}
\nc\bS{\mathbb{S}}
\nc\bT{\mathbb{T}}
\nc\bU{\mathbb{U}}
\nc\bV{\mathbb{V}}
\nc\bW{\mathbb{W}}
\nc\bY{\mathbb{Y}}
\nc\bX{\mathbb{X}}
\nc\bZ{\mathbb{Z}}
\nc\cA{\mathcal{A}}
\nc\cB{\mathcal{B}}
\nc\cC{\mathcal{C}}
\rnc\cD{\mathcal{D}}
\nc\cE{\mathcal{E}}
\nc\cF{\mathcal{F}}
\nc\cG{\mathcal{G}}
\rnc\cH{\mathcal{H}}
\nc\cI{\mathcal{I}}
\nc{\cJ}{\mathcal{J}} 
\nc\cK{\mathcal{K}}
\rnc\cL{\mathcal{L}}
\nc\cM{\mathcal{M}}
\nc\cN{\mathcal{N}}
\nc\cO{\mathcal{O}}
\nc\cP{\mathcal{P}}
\nc\cQ{\mathcal{Q}}
\rnc\cR{\mathcal{R}}
\nc\cS{\mathcal{S}}
\nc\cT{\mathcal{T}}
\nc\cU{\mathcal{U}}
\nc\cV{\mathcal{V}}
\nc\cW{\mathcal{W}}
\nc\cY{\mathcal{Y}}
\nc\cX{\mathcal{X}}
\nc\cZ{\mathcal{Z}}
\nc\oGamma{\overline{\Gamma}}
\nc{\dmo}{\DeclareMathOperator}
\rnc{\Re}{\operatorname{Re}}
\rnc{\Im}{\operatorname{Im}}
\nc{\spn}{\operatorname{span}}
\dmo{\rank}{rank}
\dmo{\End}{End}
\dmo{\Jac}{Jac}
\dmo{\Id}{Id}
\dmo{\Ann}{Ann}
\dmo{\Area}{Area}
\dmo{\Isom}{Isom}
\nc{\HT}{Hodge-Teichm\"uller\xspace}
\nc{\Teichmuller}{Teichm\"uller\xspace}
\title{The algebraic hull of the Kontsevich--Zorich cocycle}
\author[Eskin]{Alex~Eskin}
\author[Filip]{Simion~Filip}
\author[Wright]{Alex~Wright}
\begin{document}

\begin{abstract}
We compute the algebraic hull of the Kontsevich--Zorich cocycle over any $ \GL^+_2(\bR) $ invariant subvariety of the Hodge bundle, and derive from this finiteness results on such subvarieties. 
\end{abstract}

\maketitle




\section{Introduction}

The space of Riemann surfaces equipped with a holomorphic $ 1 $-form carries a natural action of $ \GL^+_2(\bR) $.
The group of diagonal matrices corresponds to the \Teichmuller geodesic flow and can be viewed as a renormalization process for certain flows on surfaces.
This renormalization process has applications to a class of dynamical systems including interval exchange transformations and flows on surfaces.
For an introduction and survey of these topics, see Forni--Matheus \cite{Forni_Matheus_survey}, Masur--Tabachnikov \cite{Masur_Tab}, Wright \cite{Wsurvey}, and Zorich \cite{Zorich_survey}.

Topological and measure rigidity results for the $ \GL_2^+(\bR) $-action due to McMullen \cite{McMullen_classification} and  Eskin, Mirzakhani, and Mohammadi \cite{EM,EMM} show many similarities with locally homogeneous spaces and Ratner's Theorems.
In particular, if $(X,\omega)$ is a Riemann surface with holomorphic $1$-form, the  closure in the stratum of Abelian differentials of the orbit $ \overline{\GL_2^+(\bR)\cdot (X,\omega)} $  is an immersed orbifold given in certain  local coordinates by linear equations. Such immersed sub-orbifolds, usually called ``affine invariant submanifolds'',  are subvarieties of strata of Abelian differentials \cite{sfilip_algebraicity}.

Our main results analyze the Kontsevich--Zorich cocycle, which encodes  parallel translation of cohomology classes along $\GL_2^+(\bR)$ orbits. When translation surfaces are described by polygons in the plane, the $ \GL_2^+(\bR) $-action distorts the polygons linearly.
The Kontsevich--Zorich cocycle  encodes the procedure of cutting and regluing the polygons to a less distorted shape, and hence carries the mysterious part of the dynamics of the $ \GL_2^+(\bR)$-action.
It has been studied extensively, see e.g. \cite{Forni_deviations, FMZ_lyap}.

The algebraic hull of a cocycle is, informally speaking, the smallest algebraic group into which the cocycle can be conjugated \cite{Zimmer_book}.
We analyze the algebraic hull of the Kontsevich--Zorich cocycle over an arbitrary affine\ann{\textbf{Referee:} ``an arbitrary an affine"\\\textbf{Authors:} Deleted extra word.} invariant submanifold $ \cM$.
Theorem~\ref{T:AlgHull} computes it in terms of monodromy, and further results on monodromy determine precisely some parts of the algebraic hull.
Theorem~\ref{T:equidistribution} shows that for any sequence of affine manifolds $ \cM_i \subset \cM$ that equidistribute inside $ \cM $, the algebraic hulls of $ \cM $ and $ \cM_i $ eventually agree, up to finite index and compact factors.

The main applications of these results is to finiteness of orbit closures.
Theorem~\ref{T:Fin} implies that in any genus, there are only finitely many  orbit closures, with only two kinds of exceptions.
First, there are always the square-tiled surfaces, which generate \Teichmuller curves.
Second, there could be finitely many families of orbit closures of a very special kind, and such families are themselves contained in finitely many higher-dimensional orbit closures. Theorem~\ref{T:Fin}  is one of many results in mathematics stating that when infinitely many ``special" subvarieties of a given dimension exist it is because they are contained in a larger dimensional special  subvariety, whose existence implies the existence of the smaller special subvarieties. Compare to the Andr{\'e}--Oort and Zilber--Pink Conjectures. (For us the ``special" subvarieties are the $\GL^+_2(\bR)$ invariant ones.)

Methods related to those in the current paper were used by Matheus--Wright \cite{MW} and Lanneau--Nguyen--Wright \cite{LNW} to prove finiteness results for Teichm\"uller curves. Very different methods, which unlike ours are in principle effective, have been used by McMullen, M\"oller, Brainbridge, and Habegger to prove finiteness results for Teichm\"uller curves \cite{Mc7, McM:spin,   M3, BM, BHM}. McMullen classified all orbit closures in genus 2 \cite{McMullen_classification}.\ann{\textbf{Referee:} ``(one problem remains open ...)" could you please be more
specific or delete the line?\\\textbf{Authors:} Line deleted.} 

\bold{Detailed statements.}
Let $\cM$ be an affine invariant submanifold, equipped with the flat bundles of absolute cohomology $H^1$ or relative cohomology $H^1_{rel}$.
The fibers of these bundles are the cohomology groups of the Riemann surfaces parametrized by $ \cM $.
The topological trivializations of the bundles lead to a flat connection, and parallel transport along the orbits of $ \GL_2^+(\bR) $ gives the Kontsevich--Zorich cocycle. 

Suppose now that $ V $ is a subbundle of $ H^1 $ or $ H^1_{rel} $, for instance $ V $ could be the entire bundle.
The algebraic hull of the Kontsevich--Zorich cocycle on $V$ is defined to be the smallest linear algebraic group such that there exists a measurable choice of basis in each fiber of $V$ such that all matrices obtained by parallel translation along $\GL_2 ^+(\bR)$ orbits in $\cM$  give matrices in that group. It is a nontrivial fact that the algebraic hull is well-defined up to conjugacy. We denote the algebraic hull by $A_V(\cM)$, or, when $\cM$ is clear from context, just $A_V$, and similarly we denote the Zariski closure of monodromy as $G_V(\cM)$ or just $G_V$. 

Let $p:H^1_{rel}\to H^1$ denote the forgetful map from relative to absolute cohomology.
For any flat subbundle $V\subset H^1_{rel}$, it is known that $p(V)$ is the direct sum of simple subbundles \cite{AEM}.

Consider the tautological bundle $ T\subset H^1_{rel}$, defined as the  span of the real and imaginary parts of the holomorphic $ 1 $-form.  The bundle $T$ is a fundamental example of a bundle which is $ \GL_2^+(\bR) $-invariant, but not flat (unless $ \cM $ is a \Teichmuller curve). We call $p(T)$  the tautological bundle of $H^1$.

\begin{thm}\label{T:AlgHull}
	Let $\cM$ be an affine invariant submanifold and let $V$ be any flat subbundle of $H^1$ or $H^1_{rel}$. 
	The algebraic hull $A_V$ is the stabilizer of the tautological plane in the Zariski closure of monodromy $G_V$.
	
	If $V$ does not contain the tautological plane, then the algebraic hull coincides with the Zariski closure of monodromy.
\end{thm}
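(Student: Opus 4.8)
The plan is to bound $A_V$ from above using the flat structure together with the invariance of the tautological plane, and from below by reducing the computation to a classification of \emph{all} $\GL^+_2(\bR)$-invariant measurable tensor fields carried by $V$. The upper bound is soft; the classification is the heart of the matter.

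\textbf{Upper bound.} First I would use that the Gauss--Manin connection reduces the structure group of the flat bundle $V$ to $G_V$: in a measurable framing subordinate to the flat atlas, the Kontsevich--Zorich cocycle is a composition of locally constant transition maps, hence takes values in the group generated by the monodromy, and therefore in $G_V$. This gives $A_V \subseteq G_V$ up to conjugacy. When $V$ contains the tautological plane, $T$ is a $\GL^+_2(\bR)$-invariant (though non-flat) subbundle, so inside the $G_V$-bundle it determines a cocycle-equivariant measurable section of the associated bundle whose fiber is the $G_V$-orbit of $T$ in a Grassmannian; this is a further reduction of the structure group, so $A_V \subseteq \Stab_{G_V}(T)$. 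When $V$ does not contain the tautological plane this second step is vacuous and only $A_V \subseteq G_V$ survives.

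\textbf{From the algebraic hull to invariant tensor fields.} Next I would establish the identity
\[ A_V \;=\; \bigcap_{\Xi}\,\Stab_{G_V}\bigl(\Xi(x_0)\bigr) \]
up to conjugacy, where $\Xi$ ranges over $\GL^+_2(\bR)$-invariant measurable tensor or flag fields built from tensor powers of $V$ and $x_0$ is a generic point. One inclusion: given such a $\Xi$, ergodicity of the $\GL^+_2(\bR)$-action forces the essential image of $\Xi$, read in a framing in which the cocycle lands in $G_V$, to lie in a single $G_V$-orbit; a change-of-framing argument then reduces the cocycle to $A_V \cap \Stab_{G_V}(\Xi(x_0))$, and minimality of $A_V$ gives $A_V \subseteq \Stab_{G_V}(\Xi(x_0))$. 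The reverse inclusion comes from Chevalley's theorem: $A_V$ is the stabilizer of a line in some $G_V$-representation, and the reduction of the cocycle to $A_V$ is itself an invariant tensor (line) field whose pointwise stabilizer is a conjugate of $A_V$. Granting this identity, the theorem follows once every invariant tensor field is shown to be, up to the flat structure, assembled from flat subbundles and, when present in $V$, the tautological plane $T$: then each $\Stab_{G_V}(\Xi(x_0))$ is either $G_V$ (the purely flat case) or exactly $\Stab_{G_V}(T)$, so the intersection equals $\Stab_{G_V}(T)$ when $V \supseteq T$ (the minimum being realized already by $T$) and equals $G_V$ otherwise, which is exactly the two assertions of the theorem.

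\textbf{The main obstacle.} The serious step is the classification of $\GL^+_2(\bR)$-invariant tensor fields. For subbundles of $H^1$ this is the dichotomy ``flat or tautological'', which I would deduce by combining the measure rigidity of Eskin--Mirzakhani--Mohammadi \cite{EMM} (to force invariant measurable structures over $\cM$ to be genuinely algebraic), the semisimplicity of $p(V)$ of Avila--Eskin--M\"oller \cite{AEM} and the Hodge-theoretic semisimplicity of Filip (to exclude non-flat invariant structures away from the tautological plane, using that tensor powers of $H^1$ remain polarizable variations of Hodge structure), and Wright's cylinder-deformation techniques (to pin down the tautological part precisely). Upgrading this from subbundles of $H^1$ to arbitrary tensor powers, where one must rule out ``hidden'' invariant tensors that are not themselves subbundles or flags, is the delicate technical point. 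Finally I would handle relative cohomology by reduction to the absolute case: realize $V$ as an extension of the semisimple bundle $p(V)$ by the finite-monodromy purely relative subbundle $V \cap \ker p$, apply the absolute result to $p(V)$, and argue that the associated extension cocycle admits no further reduction; one must also keep track of the compact multiplicity factors appearing in the isotypic decomposition, which is the source of the ``finite index and compact factors'' qualification in the downstream results.
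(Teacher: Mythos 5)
Your upper bound and your reformulation of $A_V$ as the common stabilizer of all $\GL^+_2(\bR)$-invariant tensor fields are both correct and match the paper's setup (this is essentially Definition~\ref{def:alg_hull_tensor} together with the two trivial observations that flat tensors and the tautological plane are $\GL^+_2(\bR)$-invariant). But the lower bound is where your route diverges, and where it has a real gap.

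You propose to \emph{classify} all $\GL^+_2(\bR)$-invariant tensor fields on tensor constructions of $V$, showing each one is ``assembled from flat subbundles and $T$,'' and you flag the passage from subbundles of $H^1$ to arbitrary tensor powers --- ruling out ``hidden'' invariant tensors --- as ``the delicate technical point.'' That flag is honest, because you do not actually resolve it, and without it the argument does not close. The intermediate claim that each $\Stab_{G_V}(\Xi(x_0))$ is ``either $G_V$ or exactly $\Stab_{G_V}(T)$'' is also not quite what you can expect: an invariant tensor built as a polynomial mixture of flat pieces and $T$-dependent pieces need not have one of those two stabilizers, and what you actually need (and what the correct argument delivers) is only the containment $\Stab_{G_V}(\Xi(x_0)) \supseteq G_V \cap \Stab(T)$.

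The paper avoids the classification entirely by exploiting the precise \emph{form} of an invariant tensor rather than its \emph{content}. Filip's theorem (recalled as Theorem~\ref{thm:phi_polynomial}) says that any $\GL^+_2(\bR)$-equivariant measurable section $\phi$ of a tensor construction has a local expression $\phi(\boldx,\boldy)=\sum_l s_l\, P_l(\boldx,\boldy)/A(\boldx,\boldy)^{k_l}$ with $s_l$ flat and $P_l$ $\SL_2(\bR)$-invariant polynomials. Two consequences follow essentially for free: (a) this expression is defined not just in a chart on $\cM$ but on all of $\Gr^\circ(2,T\cM_{m_0})$, the local model for the space of $\GL_2^+(\bR)$-orbits (Proposition~\ref{prop:phi_ext}); and (b) the cocycle compatibility relation \eqref{eqn:equiv_prop} for $\phi$ is an equality of \emph{polynomial} data, so it holds not only for the monodromy but for its entire Zariski closure (Proposition~\ref{prop:eqvt_Grass}). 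From (a) and (b), any $\gamma \in G_V \cap \Stab(T)$ satisfies $\phi(m_0) = \phi_{m_0,ext}(\gamma\cdot m_0) = \gamma\cdot\phi_{m_0,ext}(m_0) = \gamma\cdot\phi(m_0)$, which is exactly the containment you need for every $\Xi$ simultaneously. No classification of $\Xi$ is required. Your proposed ingredients --- EMM measure rigidity, AEM semisimplicity, cylinder deformations --- are real and used elsewhere in the paper (e.g.\ in \S\ref{S:monodromy} to compute $G_V$ itself, and in \S\ref{sec:alg_hull_limits}), but they are not the mechanism of Theorem~\ref{T:AlgHull}; the mechanism is polynomial rigidity plus Zariski density. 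For the relative case your extension-by-$\ker p$ idea is in the right spirit, but the paper handles $H^1_{rel}$ uniformly by extending the polynomiality/analyticity results to relative cohomology in Appendix~\ref{app:an_poly} rather than by a separate reduction.
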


Recall that any affine invariant submanifold $ \cM $ has a tangent bundle $ T\cM\subset H^1_{rel}$, and we may also consider its image $ p(T\cM)\subset H^1$  in absolute cohomology.
If the equations defining $ \cM $ have entries in some number field larger than $\bQ$, then the Galois conjugates of the above bundles give additioanal flat subbundles.

The next result follows from Theorem~\ref{T:AlgHull} and additional results on Zariski closure of monodromy.
Even for strata of abelian differentials, the result gives new, nontrivial information on the algebraic hull.

\begin{thm}\label{T:MoreAlgHull}
	Let $\cM$ be an affine invariant submanifold.\leavevmode
	\begin{enumerate}
		\item The algebraic hulls $A_{p(T\cM)}$ and $A_{T\cM}$ are the full group of endomorphisms that respect the symplectic form, the tautological plane, and, in the case of $T\cM$, the kernel of the map $p$ from relative to absolute cohomology. 
		\item The algebraic hull of a nontrivial Galois conjugate of $p(T\cM)$ or $T\cM$ is the full group of endomorphisms that respect the symplectic form, and, in the case of $T\cM$, the kernel of the map $p$ from relative to absolute cohomology.
	\end{enumerate}
\end{thm}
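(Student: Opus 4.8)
The plan is to combine Theorem~\ref{T:AlgHull} with a determination of the Zariski closure of monodromy $G_V$ for $V$ equal to $p(T\cM)$, to $T\cM$, and to their nontrivial Galois conjugates. By Theorem~\ref{T:AlgHull} one has $A_V = \Stab_{G_V}(T)$ whenever $V$ contains the tautological plane $T$, and $A_V = G_V$ otherwise, so the two assertions reduce to (a) deciding which of these bundles contains the tautological plane, and (b) identifying each $G_V$.

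First I would settle (a). The tangent bundle $T\cM \subset H^1_{rel}$ always contains the tautological bundle: at any $(X,\omega)$ the directions tangent to the $\GL_2^+(\bR)$ orbit span exactly $\spn_\bR(\Re\omega,\Im\omega) = T_{(X,\omega)}$, and these lie in $T\cM$ because $\cM$ is $\GL_2^+(\bR)$-invariant; applying $p$ gives $p(T\cM) \supset p(T)$ as well. For a nontrivial Galois conjugate one uses the decomposition, over the field of definition $\bk$ of $\cM$, of $p(T\cM)$ (resp.\ $T\cM$) into Galois-conjugate subbundles: the tautological plane lies in the identity summand and meets each other summand trivially, so a nontrivial conjugate of $p(T\cM)$ or $T\cM$ does not contain it. Hence the first clause of Theorem~\ref{T:AlgHull} applies to $p(T\cM)$ and $T\cM$, while its second clause applies to the nontrivial Galois conjugates.

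Second, for (b) I would invoke the results on the Zariski closure of monodromy. Since the image of $p$ is a symplectic subbundle, $G_{p(T\cM)} \subseteq \Sp(p(T\cM))$, and the content is the reverse inclusion $G_{p(T\cM)} = \Sp(p(T\cM))$; likewise each $(p(T\cM))^\sigma$ carries a symplectic form with $G_{(p(T\cM))^\sigma} = \Sp((p(T\cM))^\sigma)$. In the relative case, $G_{T\cM}$ is contained in the group of automorphisms of $T\cM$ preserving both the degenerate symplectic form pulled back from $H^1$ and the subbundle $\ker p \cap T\cM$, and the claim is that it is as large as these evident structures permit (i.e.\ the group appearing in part (1) with the tautological-plane condition dropped), and similarly for $(T\cM)^\sigma$. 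Granting these identifications, what remains is the group-theoretic computation of the stabilizer of the tautological plane: $\Stab_{\Sp(p(T\cM))}(p(T))$ is tautologically the full group of symplectic endomorphisms preserving $p(T)$, and its preimage in the corresponding relative group is the full group of endomorphisms respecting the symplectic form, the tautological plane, and $\ker p$; dropping this last step for the nontrivial Galois conjugates (where $T$ is absent) gives part (2).

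The hard part is the monodromy input --- showing $G_V$ is maximal for each bundle. For the Galois conjugates this means ruling out any further invariant subbundle and any complex or quaternionic structure enlarging the endomorphism algebra of the cocycle, which uses semisimplicity of the Kontsevich--Zorich cocycle together with the structure theory of affine invariant submanifolds and their fields of definition; pinning down the exact form of $G_{T\cM}$ is also delicate, since the (finite) monodromy on the purely relative part and the possibly non-split extension $0 \to \ker p \to H^1_{rel} \to H^1 \to 0$ both enter. By comparison, the transversality of Galois conjugates to the tautological plane, the containments $G_V \subseteq \Sp$, and the final stabilizer bookkeeping are routine.
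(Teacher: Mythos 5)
Your reduction is correct and matches the paper's: the theorem is obtained by combining Theorem~\ref{T:AlgHull} with the monodromy computations of Section~\ref{S:monodromy}, namely that $G_{p(T\cM)_\iota}=\Sp(p(T\cM)_\iota)$ (Theorem~\ref{thm:Zar_cl_pTM}, cited from \cite{sfilip_zero}) and $G_{T\cM}=U_\cM\rtimes\Sp(p(T\cM))$ with the analogous statement for Galois conjugates (Proposition~\ref{prop:Zar_cl_TM}, proved via \cite{AW_field}), together with exactly the observations you make about which bundles contain the tautological plane. You correctly flag the monodromy maximality as the substantive input you are black-boxing; the paper supplies it by citation in the absolute case and by Proposition~\ref{prop:Zar_cl_TM} in the relative case.
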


An endomorphism respects $\ker(p)$ if it acts as the identity on $\ker(p)$; in that case it respects the symplectic form if the induced map on the image in $ H^1 $ preserves the symplectic form.

The next results will be essential for proving finiteness statements on orbit closures.

\begin{thm}\label{T:equidistribution} 
	Fix an affine invariant submanifold $\cM$, and $V\subset H^1$ or $ V\subset H^1_{rel} $ a flat subbundle over $ \cM $.
	For any affine invariant submanifold $ \cM'\subset \cM $ we have the containment of algebraic hulls $A_{V}(\cM')\subset A_{V}(\cM)$.
	
	Furthermore, there is a finite union $\cB$ of proper affine invariant submanifolds of $\cM$ such that if $\cM'$ is not contained in $\cB$, then
	 $A_{V}(\cM')$ and  $A_{V}(\cM)$ are equal up to finite index and compact factors. 
\end{thm}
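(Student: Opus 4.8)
\medskip
\noindent\textbf{Proof strategy.}
The containment $A_V(\cM')\subseteq A_V(\cM)$ is the easy half. The plan is to fix a measurable trivialization of $V$ over $\cM$ in which every matrix of parallel transport along a $\GL_2^+(\bR)$-orbit segment lies in $A:=A_V(\cM)$, and then restrict this trivialization to $\cM'$. Since the cocycle over $\cM'$ is the restriction of the cocycle over $\cM$, while $\cM'$ carries an ergodic invariant probability measure, the cocycle over $\cM'$ still takes values in $A$ in this trivialization; hence $A_V(\cM')$ is conjugate into $A$.

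For the second assertion I would first pass to the Zariski closure of monodromy. By Theorem~\ref{T:AlgHull} we have $A_V(\cM)=\Stab_{G_V(\cM)}(P)$ and $A_V(\cM')=\Stab_{G_V(\cM')}(P)$, where $P$ is the tautological plane, a bundle that does not depend on the ambient affine invariant submanifold; and the map on fundamental groups induced by $\cM'\hookrightarrow\cM$ yields $G_V(\cM')\subseteq G_V(\cM)$, both groups being semisimple. Since forming the stabilizer of a fixed subspace is compatible with passing to finite-index subgroups and to quotients by compact normal factors (a routine group-theoretic check), it then suffices to construct a finite union $\cB$ of proper affine invariant submanifolds of $\cM$ such that $\cM'\not\subseteq\cB$ forces $G_V(\cM')$ to equal $G_V(\cM)$ up to finite index and compact factors.

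Next I would convert smallness of $G_V(\cM')$ into the presence of extra structure on $V|_{\cM'}$. If $G_V(\cM')$ is not equal to $G_V(\cM)$ up to finite index and compact factors, then the image of $G_V(\cM')$ in the quotient of $G_V(\cM)$ by its compact factors is a reductive subgroup of positive codimension; by Chevalley's theorem, together with a bound depending only on $\dim V$ for the tensor degree needed to realize a reductive subgroup of a fixed reductive group as a line stabilizer, $G_V(\cM')$ is contained in the stabilizer of a line in $W:=\bigoplus_{a+b\le N}V^{\otimes a}\otimes(V^*)^{\otimes b}$ that is not $G_V(\cM)$-invariant. Because $G_V(\cM)$ is reductive, $W$ is a semisimple $G_V(\cM)$-module, so this says exactly that $W|_{\cM'}$ admits a flat (equivalently, by semisimplicity, $\GL_2^+(\bR)$-invariant) sub-line-bundle that does not extend to a flat sub-line-bundle of $W$ over all of $\cM$.

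The crux is to show that the set $Z$ of all $x\in\cM$ lying on some affine invariant submanifold $\cM'\subseteq\cM$ carrying such an extra flat sub-line-bundle of $W$ is contained in a finite union of proper affine invariant submanifolds of $\cM$; that union is the desired $\cB$, and each piece is proper since the sub-line-bundle fails to extend to $\cM$. A flat sub-line-bundle over $\cM'$ is a flat section $\cM'\to\bP(W)|_{\cM'}$ whose image is an affine invariant submanifold of $\bP(W)$ mapping finitely onto $\cM'$, using the extension to the projectivized cocycle of the classification of Eskin--Mirzakhani--Mohammadi \cite{EMM}. Were $Z$ not contained in finitely many proper affine invariant submanifolds of $\cM$, one could select such submanifolds of $\bP(W)$ that equidistribute, again by \cite{EMM}, to an affine invariant submanifold $\widetilde\cN\subseteq\bP(W)$ which eventually contains them and projects onto $\cM$; analyzing $\widetilde\cN\to\cM$ with the semisimplicity of $W|_{\cM}$ — which controls the $G_V(\cM)$-invariant subvarieties of $\bP(W_x)$, and hence the fibers of $\widetilde\cN$ — one would show that in every case the alleged extra sub-line-bundle is already present over a finite cover of $\cM$, a contradiction. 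I expect this last step to be the main obstacle: it is precisely the point where one must preclude infinite families of affine invariant submanifolds carrying extra structure on a fixed flat bundle, and it will lean on the algebraicity of affine invariant submanifolds together with an induction on $\dim\cM$.
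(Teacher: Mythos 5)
Your proposal captures the right shape of the problem but has two substantive gaps, and the ``easy half'' is also not quite as easy as you state.

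For the containment $A_V(\cM')\subseteq A_V(\cM)$, observe that a \emph{measurable} trivialization of $V$ over $\cM$ is defined only up to a null set for Lebesgue measure on $\cM$, and a proper affine invariant submanifold $\cM'\subset\cM$ is a null set. So the phrase ``restrict this trivialization to $\cM'$'' does not literally make sense: the trivialization might be undefined almost everywhere on $\cM'$. The point the paper uses, and which you need, is that over $\cM$ every measurable $\GL_2^+(\bR)$-equivariant line subbundle of a tensor power of $H^1$ is automatically real-analytic (and in fact given by polynomials in period coordinates); this is the rigidity theorem of \cite{sfilip_ssimple}, quoted here as Theorem~\ref{thm:phi_polynomial} and its extension to $H^1_{rel}$ in Appendix~\ref{app:an_poly}. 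Only once you know the reduction is continuous can you evaluate it on $\cM'$. Without this, even the inclusion fails a priori.

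For the main assertion, your reduction to monodromy via Theorem~\ref{T:AlgHull} and the Chevalley step is sound, but the step you flag as ``the main obstacle'' is indeed the whole theorem, and the mechanism you sketch does not close it. You invoke the Eskin--Mirzakhani--Mohammadi classification for affine invariant submanifolds of $\bP(W)$, but the total space of the projectivized Kontsevich--Zorich cocycle is not a stratum of abelian differentials, so \cite{EMM} does not apply there, and the notion of ``affine invariant submanifold of $\bP(W)$ mapping finitely onto $\cM'$'' is not one that that theory delivers. The paper circumvents this precisely: given a putative sequence $\cM_i$ with strictly smaller hulls, it puts an $\SL_2(\bR)$-invariant probability measure $\eta_i$ on the projectivized bundle by taking the product of Lebesgue on $\cM_i$ with a model fiber measure supported on the hull's orbit (the compactness of $G_i/G_i^\zarcon$ is what makes the fiber measure finite), takes a weak limit $\eta$ projecting to Lebesgue on $\cM$, and analyzes the fiberwise disintegration of $\eta$ using Zimmer's countable-separability and measure-stabilizer lemmas (Proposition~\ref{prop:separability} and \cite[3.2.23, 2.1.10]{Zimmer_book}). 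Either the resulting stabilizer in $G$ is a further reduction of the hull over $\cM$, contradiction; or $G$ preserves a probability measure on $\bP(V')$, forcing the action on $V'$ to be via a compact group by \cite[Cor.~3.2.2]{Zimmer_book}. No classification of orbit closures in $\bP(W)$ is required. (The paper also offers a second, shorter route via the Hodge--Teichm\"uller tensor bundle, identifying the locus $\cB$ with the places where the dimension of that bundle jumps.) Finally, your proposal does not address the relative case $V\subset H^1_{rel}$ at all; the paper needs a separate argument there (Theorem~\ref{thm:limit_relative}), involving a reduction modulo multiplicities (Proposition~\ref{prop:mult}) and an explicit description of the unipotent part of the hull in $\Hom(E,\ker p)$ (Lemma~\ref{lemma:S_alg_hull}).
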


The second statement implies eventual agreement (up to finite index and compact factors) of algebraic hulls for infinite sequences of manifolds $ \cM_i $ equidistributing inside $ \cM $.

The locus $\cB$ is contained in a locus analogous to one where the second fundamental form of the Hodge bundle fails to have full rank. If the algebraic hull of $\cM$ is connected and has no compact factors, then the theorem gives that the algebraic hulls of $\cM$ and $\cM'$ are exactly equal. This is the case when $\cM$ is a stratum, and gives the following consequence, where we restrict to square-tiled surfaces because they are abundant and much studied.

\begin{cor}
	Let $\cM_i$ be a sequence of closed $\GL_2^+(\bR)$ orbits of genus $g$ square-tiled surfaces, and assume $\cM_i$ equidistributes to a stratum. Then $A_{H^1}(\cM_i)=Sp(2)\times Sp(2g-2)$ for all $i$ sufficiently large. 
\end{cor}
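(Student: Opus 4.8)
The plan is to first compute $A_{H^1}(\cM)$ when $\cM$ is the stratum component to which the $\cM_i$ equidistribute, and then transport the answer to the individual $\cM_i$ using Theorem~\ref{T:equidistribution}. For the stratum, I would invoke the known fact that the Zariski closure of monodromy of $H^1$ over a connected component of a stratum is the full symplectic group, $G_{H^1}(\cM)=\Sp(2g)$. The tautological plane $p(T)$ is a nondegenerate symplectic $2$-plane in $H^1$, since the cup product of its spanning classes $[\Re\omega]$ and $[\Im\omega]$ equals $\Area(X,\omega)>0$; hence $H^1=p(T)\oplus p(T)^{\perp}$ as symplectic vector spaces. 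An element of $\Sp(2g)$ that stabilizes $p(T)$ automatically preserves $p(T)^{\perp}$, because the latter is determined by $p(T)$ and the preserved symplectic form, so the stabilizer of the tautological plane in $\Sp(2g)$ is the block group $\Sp(p(T))\times\Sp(p(T)^{\perp})\cong\Sp(2)\times\Sp(2g-2)$. By Theorem~\ref{T:AlgHull} this stabilizer is exactly $A_{H^1}(\cM)$, so $A_{H^1}(\cM)=\Sp(2)\times\Sp(2g-2)$.

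Next I would observe that $\Sp(2)\times\Sp(2g-2)$ is connected and semisimple with no compact factors. Theorem~\ref{T:equidistribution}, applied with $V=H^1$, then produces a finite union $\cB$ of proper affine invariant submanifolds of $\cM$ such that $A_{H^1}(\cM')$ agrees with $A_{H^1}(\cM)$ up to finite index and compact factors for every affine invariant submanifold $\cM'\subset\cM$ not contained in $\cB$. Since $A_{H^1}(\cM)$ is connected with no compact factors, ``up to finite index and compact factors'' collapses to literal equality, as in the discussion preceding the corollary; hence $A_{H^1}(\cM_i)=\Sp(2)\times\Sp(2g-2)$ for every $i$ with $\cM_i\not\subset\cB$.

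It then remains to check that $\cM_i\not\subset\cB$ for all sufficiently large $i$. Because $\cB$ is a finite union of proper affine invariant submanifolds of $\cM$, if $\cM_i\subset\cB$ for infinitely many $i$ then infinitely many $\cM_i$ would lie in a single fixed proper affine invariant submanifold $\cN\subsetneq\cM$; the affine probability measures on these $\cM_i$ would all be supported on the closed proper subset $\cN$, so every weak-$*$ limit would be supported on $\cN$, contradicting the assumption that $\cM_i$ equidistributes to $\cM$. Thus only finitely many $\cM_i$ meet $\cB$, and the corollary follows. Essentially all the difficulty is already packaged in Theorems~\ref{T:AlgHull} and~\ref{T:equidistribution}; beyond them the only external input is the identity $G_{H^1}(\cM)=\Sp(2g)$ for a stratum component, and the only point requiring (mild) care is the equidistribution argument ruling out that the curves eventually sink into the exceptional locus $\cB$ — which I expect to be the main, though minor, obstacle.
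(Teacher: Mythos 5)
Your proof is correct and follows exactly the route the paper intends: the paper gives no separate proof of this corollary but states, in the paragraph immediately before it, that over a stratum the algebraic hull is connected with no compact factors, so that the ``up to finite index and compact factors'' clause in Theorem~\ref{T:equidistribution} collapses to equality. Your derivation of $A_{H^1}(\cM)=\Sp(2)\times\Sp(2g-2)$ via $G_{H^1}(\cM)=\Sp(2g)$ and Theorem~\ref{T:AlgHull} (equivalently, Theorem~\ref{T:MoreAlgHull}(1) with $p(T\cM)=H^1$), the identity-component/no-compact-factors argument, and the weak-$*$ contradiction showing only finitely many $\cM_i$ can be contained in the closed locus $\cB$, are all standard and sound; the last point is indeed the only item not packaged in the quoted theorems, and you handle it correctly.
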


The key finiteness statement below involves the notions of rank and degree of an affine invariant manifold.
Recall that an affine manifold is cut out by linear equations in period coordinates of an ambient stratum, and has a tangent space.
There is a smallest field such that the coefficients of the linear equations can be chosen in it, and it is called the field of affine definition.
It is a number field and its degree over $ \bQ $ is called for brevity the degree of the manifold.
The rank of the manifold is defined as half the dimension of the projection of its tangent space to absolute cohomology.
For strata, the rank is the genus of the parametrized Riemann surfaces, and for \Teichmuller curves the rank is $ 1 $.  
More details on the field of affine definition can be found in \cite{AW_field}.

\begin{thm}\label{T:Fin}
In each stratum of Abelian differentials, all but finitely many affine invariant submanifolds have rank 1 and degree at most 2.  

In each genus there is a finite union of rank 2 degree 1 affine invariant submanifolds $\cM$ such that all but finitely many of the affine invariant submanifolds of rank 1 and degree 2 are a codimension 2 subvariety of one of these $\cM$. 
\end{thm}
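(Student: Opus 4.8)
The plan is to deduce finiteness from the rigidity of the algebraic hull established in Theorems~\ref{T:AlgHull}, \ref{T:MoreAlgHull}, and~\ref{T:equidistribution}, combined with a dimension-counting / degeneration argument. The starting observation is that if there were infinitely many affine invariant submanifolds of a fixed stratum of rank $\ge 2$ (or of rank $1$ and degree $\ge 3$, or of rank $1$ and degree $2$ not contained in one of the listed rank-$2$ examples), then by passing to a subsequence one could arrange them to equidistribute, in the sense of \cite{EMM}, to some affine invariant submanifold $\cM$ of the stratum. Theorem~\ref{T:equidistribution} then forces the algebraic hulls $A_{H^1_{rel}}(\cM_i)$ to agree with $A_{H^1_{rel}}(\cM)$ up to finite index and compact factors, once $i$ is large and $\cM_i\not\subset\cB$; since $\cB$ is a finite union of proper affine submanifolds, all but finitely many of the $\cM_i$ lie outside $\cB$, so this applies to infinitely many of them.

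The second step is to translate the algebraic-hull coincidence into a statement about the tangent bundles. Each $\cM_i$ carries its tangent bundle $T\cM_i\subset H^1_{rel}$, whose Galois conjugates under the field of affine definition are flat subbundles of $\cM_i$; by Theorem~\ref{T:MoreAlgHull} the algebraic hull of $T\cM_i$, and of each Galois conjugate, is as large as possible given the constraints it must respect (symplectic form, tautological plane, $\ker p$). For the hull of $T\cM_i$ to be contained in the hull of the fixed $\cM$, the number of independent Galois conjugates (i.e.\ the degree) and the rank cannot both be large: a high-rank or high-degree $\cM_i$ would have a ``bigger'' algebraic hull structure — more independent symplectic blocks coming from Galois conjugates, and larger symplectic blocks coming from higher rank — than the ambient $\cM$ can accommodate, contradicting $A_{T\cM_i}(\cM_i)\subset A_{T\cM}(\cM)$ up to finite index and compact factors. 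This pins down rank $1$ and degree $\le 2$ for all but finitely many $\cM_i$ in each stratum. Summing over the finitely many strata of a given genus gives the first assertion.

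For the second assertion one refines the degeneration: among the infinitely many rank-$1$, degree-$2$ submanifolds $\cM_i$ of a fixed stratum, equidistribution again produces a limit $\cM$, which by the hull computation must itself be rank $2$ and degree $1$ (it is large enough to contain the hull of a rank-$1$ degree-$2$ piece, but finiteness prevents it from being larger). One then checks, using the structure of the symplectic-orthogonal splitting forced by the hull, that $\cM_i$ sits inside $\cM$ as a subvariety of codimension $2$: the degree-$2$ field of affine definition of $\cM_i$ contributes a Galois-conjugate flat subbundle inside $T\cM$, and $\cM_i$ is essentially the locus where the tangent space meets a rank-$2$ flat subbundle of $\cM$ in the expected way, which is codimension $2$. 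A compactness/accumulation argument bounds the number of such limits $\cM$ by a finite number within each genus. The main obstacle I expect is precisely this last step: extracting, from the abstract agreement of algebraic hulls up to finite index and compact factors, the concrete geometric conclusion that $\cM_i$ is a codimension-$2$ subvariety of $\cM$ — this requires controlling how the field of affine definition and the flat-subbundle decomposition interact, and ruling out the compact and finite-index ambiguities actually affecting the codimension count.
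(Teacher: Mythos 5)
Your overall strategy — pass to an equidistributing subsequence, invoke Theorem~\ref{T:equidistribution} to get agreement of algebraic hulls, and then translate that agreement into a constraint on the tangent bundle of $\cM_i$ via Theorem~\ref{T:MoreAlgHull} — matches the paper's. But the crucial bridging step is garbled, and fixing it is exactly where the content of the paper's proof lives (its Lemma~\ref{L:finite}).

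The first problem is that the inclusion you write, $A_{T\cM_i}(\cM_i)\subset A_{T\cM}(\cM)$, does not type-check: these are subgroups of automorphisms of fibers of \emph{different} bundles ($T\cM_i$ versus $T\cM$). Theorem~\ref{T:equidistribution} compares hulls of the \emph{same} flat bundle — namely $T\cM$ and its Galois conjugates, restricted to $\cM_i$ — so the correct conclusion is $A_{T\cM}(\cM_i)=A_{T\cM}(\cM)$ (and similarly for Galois conjugates; note the agreement is exact rather than merely up to finite index and compact factors, because by Theorem~\ref{thm:Zar_cl_pTM} these hulls have no compact factors and are connected, whereas your choice of $H^1_{rel}$ would drag along genuine compact-factor and finite-index ambiguities). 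The second, and more substantive, problem is the mechanism. It is not that a high-rank or high-degree $\cM_i$ has a ``bigger'' hull than $\cM$ can accommodate — by Theorem~\ref{T:equidistribution}(i) the hull over $\cM_i$ is always \emph{contained} in the hull over $\cM$. The correct mechanism runs in the opposite direction: $T\cM_i\subset T\cM|_{\cM_i}$ is a flat subbundle over $\cM_i$, hence $\GL_2^+(\bR)$-invariant, hence must be preserved by $A_{T\cM}(\cM_i)=A_{T\cM}(\cM)$, and Theorem~\ref{T:MoreAlgHull} gives an explicit and short list of $A_{T\cM}(\cM)$-invariant subspaces: the tautological plane, its symplectic complement in $p(T\cM)$, subspaces of $\ker p$, and (for nontrivial Galois conjugates) nothing outside $\ker p$. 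Forcing $T\cM_i$ and each of its Galois conjugates onto this list is what yields ``rank 1, degree $\le 2$.'' Without this case analysis the rank and degree bounds do not follow from anything you have written.

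The same gap affects your second assertion. The codimension-$2$ statement in the paper comes from the equality $\ker(p)\cap T\cM=\ker(p)\cap T\cM'$, which is derived inside Lemma~\ref{L:finite} from the constraint that the Galois conjugate of $T\cM'$ must be one of the listed invariant subspaces (namely $p^{-1}$ of the complement of the tautological plane). Your sketch gestures at the symplectic-orthogonal splitting but does not actually verify this kernel condition, and without it the dimension count $\dim T\cM-\dim T\cM'=\dim p(T\cM)-\dim p(T\cM')=2$ is not justified. Finally, the finiteness of the set of rank-$2$ degree-$1$ containers requires no separate accumulation argument: it is the first assertion applied to that class, since they have rank $\ge 2$.
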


A special case of of the first statement in Theorem \ref{T:Fin} is the following. 

\begin{cor}
In each genus, there are only finitely many Teichm\"uller curves with trace field of degree greater than 2. 
\end{cor}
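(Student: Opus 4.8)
The plan is to deduce this directly from the first statement of Theorem~\ref{T:Fin}; the only extra ingredient needed is the identification, for a Teichm\"uller curve, of its trace field with its field of affine definition. First I would record that a Teichm\"uller curve is, by definition, a closed $\GL_2^+(\bR)$ orbit $\GL_2^+(\bR)\cdot(X,\omega)$, hence in particular an affine invariant submanifold, and that the orders of the zeros of $\omega$ single out the unique stratum $\cH(\kappa)$ of abelian differentials in the genus $g$ of $X$ that contains it. (In passing, every Teichm\"uller curve has rank $1$, its tangent space projecting onto the complexification of the tautological plane; but only the assertion about degrees will be needed.)

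Next I would invoke Wright's work on fields of definition \cite{AW_field}: the field of affine definition of a Teichm\"uller curve $C$ coincides with its trace field. Hence the degree of $C$ as an affine invariant submanifold, that is the degree over $\bQ$ of its field of affine definition, equals the degree of the trace field of $C$. Consequently a genus $g$ Teichm\"uller curve whose trace field has degree greater than $2$ is nothing other than an affine invariant submanifold of degree greater than $2$ lying inside one of the strata of abelian differentials in genus $g$.

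I would then fix one such stratum $\cH(\kappa)$ and apply the first statement of Theorem~\ref{T:Fin}: all but finitely many affine invariant submanifolds of $\cH(\kappa)$ have rank $1$ and degree at most $2$, so in particular only finitely many have degree greater than $2$, and hence $\cH(\kappa)$ contains only finitely many Teichm\"uller curves with trace field of degree greater than $2$. Since the strata of abelian differentials in genus $g$ are finite in number (they are indexed by the partitions of $2g-2$) and every such Teichm\"uller curve lies in exactly one of them, summing over the strata would give the asserted finiteness.

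I do not expect any genuine obstacle in this deduction: it uses only the ``degree at most $2$'' half of the first statement of Theorem~\ref{T:Fin}, together with the coincidence of the trace field and the field of affine definition for Teichm\"uller curves; all the substance is already contained in Theorem~\ref{T:Fin} itself.
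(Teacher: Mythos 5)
Your deduction is correct and matches the paper's: the paper treats this corollary as a direct special case of the first part of Theorem~\ref{T:Fin}, using exactly the same identification from \cite{AW_field} of the trace field of a Teichm\"uller curve with its field of affine definition, and then summing over the finitely many strata in each genus.
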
 

Affine invariant submanifolds of rank 1 and degree 1 consist of branched covers of tori, and these are dense in every stratum.

The strata  in genus 2 and  Prym loci in genus 3, 4, 5  are examples of rank $ 2 $  degree $ 1 $ affine invariant submanifolds; these are known to contain dense sets of codimension 2 affine invariant submanifolds of rank 1 and degree 2 by independent work of McMullen and Calta for the case of genus 2 and work of McMullen for the Prym loci \cite{Ca, Mc, Mc2}. A new example of a rank $ 2 $  degree $ 1 $ affine invariant submanifold was discovered and shown to contain infinitely many rank $ 2 $  degree $ 1 $ affine invariant submanifolds  in \cite{MMW}, and one additional example is forthcoming in \cite{EMMW}.

\begin{thm}\label{T:H2}
	Any rank 2 degree 1 affine invariant submanifold $\cM$  contains a dense set codimension 2 affine invariant submanifolds of rank 1 and degree 2.
\end{thm}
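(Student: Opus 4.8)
The plan is to exhibit the required codimension~$2$ subvarieties as \emph{eigenform loci} --- loci where the relevant part of absolute cohomology acquires real multiplication by a real quadratic order, with $\omega$ an eigenform --- and to prove density by a perturbation argument via cylinder deformations. The prototype is the case $\cM=\cH(2)$, where these are the loci $\Omega E_D$ shown to be dense by McMullen and Calta \cite{Mc, Ca}.

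\emph{Setup.} Write $W:=p(T\cM)\subset H^1$; since $\cM$ has rank $2$ and degree $1$, $W$ is a flat subbundle of fiber dimension $4$, defined over $\bQ$, and it carries the symplectic form together with the tautological plane $p(T)$. For a real quadratic discriminant $D$, a self-adjoint non-scalar endomorphism $T$ of $W$ over $\bZ$ with $T^2=D\cdot\Id$ splits $W\otimes\bQ(\sqrt D)$ into two Galois-conjugate rank~$2$ eigenbundles $W_+\oplus W_-$, exchanged by $\sqrt D\mapsto-\sqrt D$ and symplectically dual to each other. A translation surface for which $\Re\omega$ and $\Im\omega$ lie in the fiber of $W_+$ is an eigenform for the order $\cO_D$; the $\GL_2^+(\bR)$-orbit closure of such a surface inside $\cM$ will be our candidate sublocus, with $W_+$ playing the role of $p(T\cM')$. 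Two things must then be established: (i) arbitrarily close to any $(X,\omega)\in\cM$, and staying inside $\cM$, one can produce such an eigenform; (ii) its orbit closure $\cM'$ has rank~$1$, degree~$2$, and codimension~$2$ in $\cM$.

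\emph{The construction, which is the main obstacle.} First perturb within $\cM$ to a surface with a completely periodic horizontal direction; this uses that $\cM$ has positive rank, hence contains cylinders, together with the $\GL_2^+(\bR)$-action and Wright's cylinder deformation theorem, which allows one to move inside $\cM$ while reshaping cylinders. Decompose the surface into horizontal cylinders $C_1,\dots,C_n$ grouped into Wright's $\cM$-equivalence classes; inside $\cM$ one may shear and stretch each class independently, and the ratios of circumferences within a class lie in the field of affine definition, hence in $\bQ$. The eigenform condition becomes a finite system of $\bQ$-coefficient equations among the circumferences, heights, and twists, and the rank~$2$, degree~$1$ hypothesis provides just the right number of free deformation directions inside $\cM$ --- two spanning the $\bQ$-rational horizontal subspace of $W$, two more transverse to it inside $W$ --- to solve this system near the starting surface for a suitable $D$. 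Carrying this out rigorously, with a robust density statement for completely periodic directions in $\cM$ and an exact count of deformation parameters against equations, is the crux of the theorem; it is the step that genuinely uses rank~$2$ and degree~$1$, and where the genus~$2$ computations of \cite{Mc, Ca} must be generalized.

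\emph{Verifying rank, degree, and codimension.} Let $\cM'=\overline{\GL_2^+(\bR)\cdot(X',\omega')}$ for the constructed eigenform $(X',\omega')\in\cM$; it is an affine invariant submanifold, contained in $\cM$ and in the eigenform locus of $\cO_D$, the latter being a closed $\GL_2^+(\bR)$-invariant set, so $p(T\cM')$ lies in the rank~$2$ eigenbundle $W_+$ and thus $\cM'$ has rank $1$. The eigenform condition constrains absolute cohomology only, so $T\cM'\cap\ker p$ has the same dimension as $T\cM\cap\ker p$; hence $\dim_\bC\cM'$ and $\dim_\bC\cM$ differ by an even number, and since $\cM'$ has rank $1<2$ it has codimension exactly $2$. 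Finally the real multiplication by $\cO_D$ present in the tangent directions of $\cM'$ forces the field of affine definition of $\cM'$ to contain $\bQ(\sqrt D)$, so its degree is exactly $2$ (a generic choice of $D$ in the construction ensures it is not larger). Letting $(X,\omega)$ and the accuracy of approximation vary, the union of all such $\cM'$ is dense in $\cM$, completing the argument.
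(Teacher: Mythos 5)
Your overall strategy --- realize the rank~$1$, degree~$2$ subvarieties as loci of eigenforms for real multiplication by a real quadratic order, and verify rank/degree/codimension from the eigenbundle decomposition --- matches the paper's, and your rank/degree/codimension bookkeeping in the last paragraph is essentially the content of the paper's Lemma~\ref{L:RM}. But at the step you yourself flag as ``the crux'' --- producing an eigenform in an arbitrary open set $U\subset\cM$ --- you propose a route (completely periodic perturbations, cylinder deformations, counting $\bQ$-coefficient equations against free parameters) that you acknowledge you do not carry out, and indeed it is far from clear that it can be made to work in the generality claimed. This is a genuine gap.

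The paper's density argument is much simpler and entirely linear-algebraic, and is worth internalizing. Fix $m\in U$ and use period coordinates to identify $U$ with an open subset of $T\cM_m$. Fix any self-adjoint action of an order $\cO$ on $p(T\cM)_m$ preserving the integer lattice, and let $v$ be an eigenvector. Since $\cM$ has degree~$1$, $p(T\cM)_m$ has a $\bQ$-structure, so $\Sp(p(T\cM)_m,\bQ)\cong\Sp(4,\bQ)$ is defined and is dense in $\Sp(4,\bR)$; choose $\gamma$ rational with $\gamma v\in p(U)$. Conjugating the $\cO$-action by $\gamma$ and restricting to the suborder $\cO'=N\cO$ (where $\gamma\in\Sp(4,\tfrac1N\bZ)$) gives a self-adjoint, integral action for which $v'=\gamma v$ is an eigenvector. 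Any $(X,\omega)\in U$ with $p(\omega)=v'$ is then an eigenform for $\cO'$, and Lemma~\ref{L:RM} finishes. Note how the hypotheses are used: rank~$2$ gives $\Sp(4)$, and degree~$1$ gives the rational structure needed for density of $\Sp(4,\bQ)$; no cylinder geometry is required. Your proposal should replace its ``construction'' paragraph with this argument; as written, that paragraph is the missing piece rather than a proof.
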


Theorem \ref{T:H2} is a consequence of the phenomenon discovered by McMullen in \cite{Mc}. 

For some results related to ours, see \cite{Ham}.  

\bold{Additional applications.} Forthcoming work will use results of this paper to study totally geodesic submanifolds of Teichm\"uller space \cite{Wtot} as well as marked points and the illumination and security problems \cite{AW}.

\bold{Organization.}
Section \ref{S:alghull} gives general background on algebraic hulls, and Section \ref{sec:computing_alg_hull} proves Theorem \ref{T:AlgHull}. Section \ref{S:monodromy} gives results on Zariski closure of monodromy, which together with Theorem \ref{T:AlgHull} imply Theorem \ref{T:MoreAlgHull}. Section \ref{sec:alg_hull_limits} proves Theorem \ref{T:equidistribution}, which is applied in Section \ref{S:fin} to prove Theorem \ref{T:Fin}.  Section \ref{S:fin} also establishes Theorem \ref{T:H2}. The two appendices extend results of \cite{sfilip_ssimple} and show the equivalence of two different definitions of algebraic hull. 

\bold{Acknowledgements.}
The proof of Theorem \ref{T:H2} arose from conversations with Ronen Mukamel.
We are  grateful to Brian Conrad for pointing out the reference \cite{SGA3}, to Curtis McMullen for comments on a previous draft, and to Amir Mohammadi for helpful conversations about algebraic groups. 

This research was partially conducted during the period AW and SF served as Clay Research Fellows. The research of AE is partially supported by NSF grants DMS 0905912 and DMS 1201422. 

\section{Algebraic hulls}\label{S:alghull}

This section introduces algebraic hulls, in the form that they will be used in this paper. Our definition is slightly different from Zimmer's \cite[Ch. 4]{Zimmer_book}; in the appendix (see \S\ref{app:zimmer_alg_hull}) we will show it is equivalent.

\bold{Chevalley's Theorem.}
The motivation for the definition of algebraic hull used in this paper is provided by the following result \cite[Prop. 3.1.4]{Zimmer_book}, sometimes called Chevalley's Theorem.
Let $G$ be an affine algebraic group, acting faithfully on a vector space $V$.
This gives an inclusion $G\subset \GL(V)$.
Below, a \emph{standard tensor operation} refers to taking direct sums, duals, or tensor products, in any order and any finite number of times.

\begin{thm}
\label{thm:chevalley}
For any algebraic subgroup $H\subset G$, there exists a representation $\cS(V)$ constructed from $V$ by standard tensor operations, and a line $l\subset \cS(V)$, such that $H$ is the stabilizer of $l$ in $G$.

Similarly, for any \emph{reductive} algebraic\ann{\textbf{Authors:} Added word ``algebraic" twice.} subgroup $H\subset G$, there exists a representation $\cS(V)$ constructed from $V$ by standard tensor operations, and a vector $v \in \cS(V)$, such that $H$ is the stabilizer of $v$ in $G$.
\end{thm}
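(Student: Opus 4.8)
The plan is to prove the first statement by the classical argument, then deduce the reductive case. For the first part, since $G$ is an affine algebraic group, its ideal-theoretic data is controlled by the coordinate ring $k[G]$, on which $G$ acts by right translation; the key classical input is that $k[G]$, as a $G$-representation, is locally finite, i.e.\ every element lies in a finite-dimensional $G$-invariant subspace. First I would let $I \subset k[G]$ be the ideal of the closed subgroup $H$. Since $I$ is finitely generated, I can choose a finite-dimensional $G$-invariant subspace $W \subset k[G]$ containing a generating set of $I$; replacing $I$ by $I \cap W$ gives a finite-dimensional subspace $W_0 = I \cap W$ with the property that $H = \{g \in G : g \cdot W_0 = W_0\}$ (this uses that $H$ is exactly the stabilizer of the ideal $I$, together with the fact that $W_0$ generates $I$). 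Then, setting $d = \dim W_0$, I pass to the top exterior power: the line $l = \bigwedge^d W_0 \subset \bigwedge^d W$ is stabilized exactly by the subgroup of $G$ stabilizing the subspace $W_0$, hence exactly by $H$. Finally I must realize $W$, hence $\bigwedge^d W$, as a subrepresentation of a representation built from $V$ by standard tensor operations: since $G \hookrightarrow \GL(V)$ acts faithfully and $k[G]$ is generated as an algebra by matrix coefficients of $V$ (which embed $k[G]$ into sums of copies of tensor products $V^{\otimes a}\otimes (V^*)^{\otimes b}$), any finite-dimensional $G$-submodule of $k[G]$ embeds into such an $\cS(V)$; then $\bigwedge^d W$ embeds into a standard tensor construction on $\cS(V)$, which is again a standard tensor construction on $V$, and $l$ sits inside it. This proves the first statement.

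For the second (reductive) statement, I would deduce it from the first. Apply the first part to obtain $\cS(V)$ and a line $l \subset \cS(V)$ with $H = \Stab_G(l)$. Then $G$ acts on $l$ through a character $\chi : H \to \GL(l) = \mathbb{G}_m$. The point is to kill this character. Since $H$ is reductive, the derived subgroup of its identity component is semisimple and hence has no nontrivial characters, so $\chi$ restricted to $H^\circ$ has image a torus quotient; more importantly, because $H$ is reductive it is linearly reductive (in characteristic zero, which is the setting here), so the one-dimensional representation $l$ of $H$ admits an $H$-equivariant complement, and moreover some tensor power argument lets me arrange an invariant vector. Concretely: consider the representation $l \otimes l^* \cong k$ is trivial, which is not directly useful; instead I would work with $l^{\otimes n}$ for the appropriate $n$, or better, use that a reductive group has the property that $H$ is the stabilizer of a \emph{vector} iff the line it stabilizes carries the trivial character — and replace $l$ by $l \otimes (\det)^{?}$ to trivialize. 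The cleanest route: extend the character $\chi$ of $H$ to a character $\tilde\chi$ of $G$ (possible after replacing $\cS(V)$ if necessary, or argue directly), and twist; alternatively, invoke the standard fact that for reductive $H \subset G$ the homogeneous space $G/H$ is affine (Matsushima's criterion), so the orbit map $G \to G/H$ gives $H$ as the stabilizer of a point in an affine $G$-variety, which embeds $G$-equivariantly into some $\cS(V)$, and the image of the base point is the desired vector $v$.

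The main obstacle I expect is the second statement: passing from "stabilizer of a line" to "stabilizer of a vector" requires genuinely using reductivity, and the slickest justification (Matsushima's criterion, that $G/H$ affine $\iff$ $H$ reductive) is a nontrivial theorem. If I want to keep the argument self-contained I would instead argue: the character $\chi: H \to \mathbb{G}_m$ has image contained in a torus, and by linear reductivity of $H$ I can find, inside a suitable standard tensor construction, a one-dimensional $H$-subrepresentation on which $H$ acts by $\chi^{-1}$ (e.g.\ inside $\cS(V)$ twisted appropriately, using that all characters of $H$ occurring are "polynomial" in the matrix coefficients); tensoring $l$ with this gives a line with trivial $H$-action, i.e.\ a nonzero $H$-fixed vector $v$ in the tensor product, and one checks $\Stab_G(v) = H$ because $\Stab_G(l) = H$ already and the extra factor is $G$-stable data. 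The bookkeeping to ensure $\Stab_G(v)$ does not become \emph{larger} than $H$ — it cannot become smaller — is the delicate point, and is exactly where linear reductivity (characteristic zero) is used.
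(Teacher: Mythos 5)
First, a remark on context: the paper states this theorem as a quotation from Zimmer's book (it cites \cite[Prop.~3.1.4]{Zimmer_book}) and gives no proof of its own, so there is no in-paper proof to compare against; I will assess your proposal directly.

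Your argument for the first statement is the classical Chevalley argument and is essentially correct, but one step is glossed over. A finite-dimensional $G$-submodule $W$ of $k[G]$ is in general only a \emph{subquotient} of a direct sum of $V^{\otimes a}\otimes (V^*)^{\otimes b}$, not a subrepresentation: $k[G]$ is a quotient of $k[\GL(V)]$, and this quotient map has no $G$-equivariant splitting when $G$ is not reductive. So ``embed $k[G]$ into sums of copies of $V^{\otimes a}\otimes(V^*)^{\otimes b}$'' is not literally available. The fix is routine: choose a $G$-submodule $W'$ of a tensor construction $T$ surjecting onto $W$, pull $W_0$ back to $W_0'\subset W'$ (the kernel is $G$-stable, so the transporter of $W_0'$ in $G$ is again exactly $H$), and note that in characteristic zero $\Lambda^{d'}W'$ is a $G$-direct summand of $(W')^{\otimes d'}\subset T^{\otimes d'}$, so the line $\Lambda^{d'}W_0'$ sits inside a standard tensor construction. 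With that correction the first part is fine.

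The second statement is where your plan has a genuine gap. You correctly identify that reductivity of $H$ should supply an $H$-invariant complement, and you correctly identify the delicate point (ensuring $\Stab_G(v)$ does not become larger than $H$), but you never actually close it: you alternate between Matsushima's criterion, an attempt to extend the character $\chi$ of $H$ to all of $G$ (which is simply false in general), and a tensor twist whose stabilizer you do not compute. You even dismiss $l\otimes l^*\cong k$ as ``not directly useful,'' which is exactly backwards once $l^*$ is interpreted properly. The missing argument is: since $H$ is linearly reductive, write $W|_H = l\oplus W_1$ with $W_1$ an $H$-submodule, and set $l^*:=W_1^\perp\subset W^*$; this is an $H$-stable line on which $H$ acts by $\chi^{-1}$, and $W^*$ is already a standard tensor construction, so no extension of $\chi$ to $G$ is ever needed. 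Pick nonzero $e\in l$, $\lambda\in l^*$ and set $v:=e\otimes\lambda\in W\otimes W^*$. Then $H$ fixes $v$. Conversely, if $g\in G$ fixes $v$, the equality $(ge)\otimes(g\lambda)=e\otimes\lambda$ of nonzero rank-one tensors forces $ge$ to be a scalar multiple of $e$, so $g$ stabilizes $l$, i.e.\ $g\in H$. This rank-one observation is precisely what rules out $\Stab_G(v)\supsetneq H$, and its absence is why your second paragraph only proves $H\subseteq\Stab_G(v)$ rather than equality.
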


For example, the diagonal subgroup of $\SL_2(\bR)$ stabilizes the quadratic form $dx\cdot dy$, and the upper-triangular group stabilizes the horizontal axis in $\bR^2$.

The connected component of the identity of an algebraic group is not always an algebraic group. For example, the connected component of the identity of $GL_2(\bR)$ is the positive determinant matrices, and this is not an algebraic subgroup of $GL_2(\bR)$. Non-algebraic subgroups cannot arise as a stabilizer as in Theorem \ref{thm:chevalley}.
\ann{\textbf{Authors:} Added cautionary remark.}  

\bold{Cocycles.}
Suppose now that a group $ A $ has an ergodic probability measure-preserving action on a measure space $ (X,\mu) $.
A cocycle over this action is a vector bundle $ V\to X $ with a lift of the action of $ A $ to $ V $ by linear transformations on the fibers.
Below, the fiber of $ V $ above a point $ x\in X $ will be denoted $ V_x $.

\begin{definition}[Algebraic Hull]
	\label{def:alg_hull_tensor}
	The algebraic hull for a cocycle  $ V\to X $ over an $A$-action is the collection of all groups $ G_x\subset \GL(V_x) $ such that $ G_x $ is the largest group preserving the fibers of all the $ A $-invariant line subbundles of tensor power constructions on $V$.
	Similarly, the \emph{reductive} algebraic hull is the collection of the largest groups preserving, in each fiber, the fibers of $ A $-invariant sections of tensor power constructions on $V$.
		
	The groups $ G_x $ are defined for $ \mu $-a.e. $ x\in X $.
	The definition naturally extends to give measurable, continuous, real-analytic, etc., algebraic hulls, where the corresponding adjective is imposed on the line subbundles or sections defining the algebraic hull.
\end{definition}

\begin{remark}
	\label{rmk:alg_hull}
	\leavevmode
 \begin{itemize}
  \item [(i)] Definition \ref{def:alg_hull_tensor} defines the algebraic hull to be a collection of groups $ G_x\subset \GL(V_x) $ above $\mu$-a.e point $ x $.
  In Appendix~\ref{app:alg_hull} we show that the conjugacy class of $ G_x $ is independent of the choice of $ \mu $-generic point $x$ and is equal to Zimmer's definition of algebraic hull.

  \item [(ii)] Algebraic varieties satisfy the finite chain condition (i.e. the Noetherian property) so in the definition above only finitely many lines, resp. tensors, are required.
  In fact, by Chevalley's Theorem \ref{thm:chevalley} a single one suffices.

  \item [(iii)] From the results in \cite{AEM} the algebraic hull of the Kontsevich--Zorich cocycle on $H^1$ is automatically reductive.
 \end{itemize}
\end{remark}

\section{Computing the Algebraic Hull}
\label{sec:computing_alg_hull}

\subsection{Setup}
In the setting of the Kontsevich--Zorich cocycle, we have the action of $ \GL_2^+(\bR) $ on a stratum of translation surfaces, and an affine invariant submanifold $ \cM $ equipped with a Lebesgue class probability measure, invariant and ergodic under $ \SL_2(\bR) $.
Each bundle in the short exact sequence
\begin{align}
\label{eqn:H1bdl_ses}
0 \to \ker(p) \to H^1_{rel} \xrightarrow{p} H^1 \to 0
\end{align}
is equipped with a flat connection inducing the Kontsevich--Zorich cocycle for the action of $ \SL_2(\bR) $.
The bundles in \eqref{eqn:H1bdl_ses} are taken to be real, i.e. the fibers are given by cohomology with real coefficients. (Parts of Section \ref{sec:alg_hull_limits} and the first appendix are exceptions; when we must refer frequently to the Hodge decomposition it is more convenient to use the complexified bundles.)
By passing to a finite cover of the ambient stratum (where zeros are labeled), assume that the bundle $\ker(p)$ is in fact trivial.
Associated to the short exact sequence \eqref{eqn:H1bdl_ses} is the sequence of groups denoted
\begin{align}
\label{eqn:H1gp_ses}
0 \to \Hom \left(H^1,\ker(p)\right) \to \Aut\left(H^1_{rel}\right) \to \Sp(H^1) \to 0.
\end{align}
Here $\Aut(H^1_{rel})$ is the group of automorphisms of $H^1_{rel}$ which preserve (i.e. act as the identity on) $\ker(p)$ and act symplectically on $H^1$. 
It can be written as a semi-direct product of the two other groups, but only after a choice of splitting $H^1\to H^1_{rel}$.

An element $ \xi \in \Hom(H^1,\ker (p)) $ induces a unipotent linear automorphism of $ H^1_{rel} $  defined by $ v\mapsto v + \xi(p(v)) $.
The map $\xi \mapsto (v\mapsto v + \xi(p(v)))$ identifies $\Hom(H^1,\ker(p))$ with the subgroup of automorphisms of $H^1_{rel}$  acting trivially on $H^1$. 

\subsection{Equivariant sections and bundles}
\label{ssec:eqvrt_bdls}

Suppose $W$ is some bundle over $\cM$, such that the $\GL^+_2(\bR)$-action on $\cM$ lifts to $W$ (e.g. by parallel transport, if $W$ is a local system).
Throughout, for a point $m\in \cM$ the subscript $\bullet_m$ (e.g. $W_m$) will denote the fiber over this point.

A section $\phi$ of $W$ defined over $\cM$ is \emph{equivariant} if for all $g\in \GL^+_2(\bR)$ and $m\in \cM$ we have $\phi(gm)=g\cdot \phi(m)$.

\bold{Rigidity of equivariant sections.}
Recall that local period coordinates on a stratum $\cH$ are given by the cohomology group $H^1_{rel}$.
Similarly, local period coordinates on $\cM$ are given by the sublocal system $T\cM\subset H^1_{rel}$.
These are local systems with complex coefficients that have a real structure, with local coordinates denoted by $(\boldx,\boldy)$ where $\boldx$ and $\boldy$ are the real and imaginary parts.

Fix a basepoint $m_0\in \cM$.
Monodromy refers to parallel transport along loops based at $m_0$. The local systems $H^1, H^1_{rel}$, etc., may be identified in a neighbourhood of $m_0$ with their fibers at $m_0$.

Let $A(\boldx,\boldy)$ denote the area of the flat surface with coordinates $(\boldx,\boldy)$, which may be calculated as the symplectic pairing of $p(\boldx),p(\boldy)\in H^1$.
This function is $\SL_2(\bR)$-invariant and scales quadratically under simultaneous scaling of both coordinates by $\bR^\times$.

The next result restricts the local nature of equivariant sections of bundles obtained from $H^1$ by any tensor operations.

\begin{thm}[{\cite[Thm. 7.7]{sfilip_ssimple}}]
	\label{thm:phi_polynomial}
	Let $ \mathbf{H} $ be some tensor construction on $ H^1 $ or $ H^1_{rel} $ and suppose $\phi$ is a $\GL_2^+(\bR)$-equivariant measurable section of $\mathbf{H}$ over $\cM$.
	Then in neighbourhood of $m_0$ in $\cM$, there exist flat local sections $\{s_l\}$ such that a.e. on the neighbourhood 
	\begin{align}
	\label{eqn:loc_form_eqvt_section}
	\phi(\boldx,\boldy) = \sum_l s_l \cdot \frac{P_l(\boldx,\boldy)} {A(\boldx,\boldy)^{k_l}},
	\end{align}
	where $P_l$ are homogeneous polynomials of bidegree $(k_l,k_l)$ in the variables $(\boldx,\boldy)$.
	Moreover, the polynomials $P_l(\boldx,\boldy)$ are invariant under the $\SL_2(\bR)$-action.
\end{thm}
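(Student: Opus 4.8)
The plan is to work in the flat trivialization over a simply connected coordinate neighbourhood $U$ of $m_0$. Using parallel transport, identify $\mathbf{H}|_U$ with $U\times\mathbf{H}_{m_0}$, and using period coordinates identify $U$ with an open subset of $T\cM_{m_0}\otimes\bC$, writing a point as $(\boldx,\boldy)$ with $\boldx,\boldy$ in the real vector space $V:=T\cM_{m_0}$. On a simply connected chart the lift of the $\GL_2^+(\bR)$-action to the local system $\mathbf{H}$ is by parallel transport, hence trivial in this trivialization; so $\GL_2^+(\bR)$-equivariance says exactly that $\phi$, viewed as a measurable map $U\to\mathbf{H}_{m_0}$, is constant along the traces in $U$ of the orbits of the linear action $(\boldx,\boldy)\mapsto(a\boldx+b\boldy,\,c\boldx+d\boldy)$. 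Choosing a flat frame $(s_l)$ of $\mathbf{H}$ over $U$ and writing $\phi=\sum_l f_l\,s_l$, it is enough to treat a single measurable scalar $f$ on $U$ invariant under this action; since the orbit of $(\boldx,\boldy)$ depends only on the plane $\spn(\boldx,\boldy)$, such an $f$ is a measurable function of the point $[\boldx\wedge\boldy]$ of the Grassmannian $\Gr_2(V)$.

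Next I would record the relevant invariant theory. By the first fundamental theorem for $\SL_2(\bR)=\Sp_2$, the $\SL_2(\bR)$-invariant polynomials on $V\otimes\bR^2$ are the polynomials in the $2\times 2$ Pl\"ucker brackets, i.e.\ in $\boldx\wedge\boldy\in\Lambda^2 V$; the area $A(\boldx,\boldy)=\langle p(\boldx),p(\boldy)\rangle$ is the value on $\boldx\wedge\boldy$ of the symplectic form pulled back by $p$, a linear functional on $\Lambda^2 V$ which is strictly positive on the locus of genuine translation surfaces, and simultaneous scaling acts on brackets with weight $2$. Consequently the functions $P_l/A^{k_l}$ with $P_l$ an $\SL_2(\bR)$-invariant polynomial of bidegree $(k_l,k_l)$ are precisely the $\GL_2^+(\bR)$-invariant rational functions on $V\otimes\bR^2$ that are regular on $\{A\neq 0\}$. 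The theorem therefore asserts that the measurable invariant function $f$ above is one of these, and, $A$ being nowhere zero near $m_0$, that it is regular there.

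The heart of the matter is to promote ``$f$ measurable and orbit-invariant'' to ``$f$ is the restriction of a rational function of $\boldx\wedge\boldy$''. The route I would take uses monodromy: $\phi$ is defined a.e.\ on all of $\cM$, so carrying the chart around a loop $\gamma$ based at $m_0$ and comparing trivializations gives, for a.e.\ point, $\phi\big(\rho_{T\cM}(\gamma)\cdot[\boldx\wedge\boldy]\big)=\rho_{\mathbf{H}}(\gamma)\cdot\phi\big([\boldx\wedge\boldy]\big)$, where $\rho_{T\cM}$ and $\rho_{\mathbf{H}}$ are the monodromy representations on $T\cM$ and on $\mathbf{H}$. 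Thus $\phi$ is a measurable map $\Gr_2(V)\to\mathbf{H}_{m_0}$ intertwining the monodromy action on the Grassmannian with a finite-dimensional algebraic representation. Since the tautological plane is generically nondegenerate it lies in the open orbit of the Zariski closure of monodromy acting on $\Gr_2(V)$ (cf.\ \cite{sfilip_ssimple}), so its monodromy orbit is Zariski dense; a measurable equivariant map out of a variety with a Zariski-dense orbit, valued in an algebraic representation, must coincide a.e.\ with a rational map $\Gr_2(V)\dashrightarrow\mathbf{H}_{m_0}$. Being in addition invariant under $\SL_2(\bR)$ and under scaling, and regular near $m_0$, this rational map has polar locus contained in the invariant irreducible divisor $\{A=0\}$; clearing those poles and expanding against the flat frame $(s_l)$ gives $\phi=\sum_l s_l\,P_l/A^{k_l}$ with each $P_l$ an $\SL_2(\bR)$-invariant polynomial of bidegree $(k_l,k_l)$, as claimed.

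The delicate step, and the one I expect to be the main obstacle, is the passage from a \emph{measurable} equivariant map to an algebraic one. Local orbit-invariance alone is far too weak --- any measurable function of $[\boldx\wedge\boldy]$ has it --- so the global monodromy (or, equivalently, the ergodic theory of the \Teichmuller geodesic flow) must really enter. If one prefers to avoid the measurable-to-rational principle, an alternative is to first prove directly that $\phi$ is real analytic, using recurrence of the geodesic flow together with the exponential contraction along the stable horocycle to propagate the regularity of $\phi$ in the directions transverse to $\GL_2^+(\bR)$-orbits, and then to apply the same invariant-theoretic identification to the analytic germ. Either way, everything outside this regularity/rationality input is bookkeeping.
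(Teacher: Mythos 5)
Your plan correctly isolates the crux: the jump from ``measurable, orbit-invariant'' to ``rational/polynomial''. But the principle you invoke to make that jump --- that a measurable map out of a variety carrying a Zariski-dense orbit of a countable group $\Gamma$, equivariant for an algebraic $\Gamma$-representation, must coincide a.e.\ with a rational map --- is not a theorem, and as stated it is false. Zariski density of a $\Gamma$-orbit constrains only algebraic objects; a merely measurable $f$ on $\Gr_2(V)$ satisfying $f(\gamma x)=\rho(\gamma)f(x)$ for a countable group of $\gamma$'s can be arbitrarily wild off the orbit equivalence classes. Rigidity statements that do force measurable cocycles to be algebraic (Margulis--Zimmer superrigidity and its relatives) apply in much narrower situations than ``Zariski-dense image in a representation''; they do not cover the monodromy of an arbitrary affine invariant submanifold.

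Your alternative sketch (``prove real-analyticity first, then apply invariant theory'') is closer to what is actually done, but the second half is not just invariant theory. After establishing real-analyticity, the paper shows polynomiality by Taylor-expanding $\phi$ transverse to a stable leaf, writing $\tilde\phi(x,v)=\sum_\alpha c_\alpha(x)v^\alpha$, and using recurrence under $g_t$ together with Forni's spectral gap for the Kontsevich--Zorich cocycle: because parallel transport on $\mathbf{H}$ grows no faster than $e^{\lambda_1 t}$ while the unstable derivative $dg_t$ contracts $v$ at a definite linear rate, the coefficients $c_\alpha$ with $|\alpha|$ large must vanish. This is the mechanism that bounds the degree and turns an analytic germ into a polynomial; real-analyticity on an open subset of the Grassmannian together with $\Gamma$-equivariance for a countable $\Gamma$ does not by itself force algebraicity (a real-analytic subvariety of $G\times\Gr$ can contain $\Gamma\times U$ without being all of $G\times U$, since $\Gamma$ is countable and discrete in the classical topology). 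You have the right reduction --- flat frame, orbit-invariance, Pl\"ucker/bracket invariant theory, and the role of the area form $A$ as the invariant divisor --- but you need to replace the measurable-to-rational principle with the Lyapunov spectral gap and recurrence argument, and the real-analyticity input itself (for the relative bundle) uses the Hodge-theoretic semisimplicity results, not just recurrence and contraction.
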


This is proven in \cite[Th. 7.7]{sfilip_ssimple} for tensor constructions on $H^1$, and in Appendix~\ref{app:an_poly} we show that tensor constructions on $H^1_{rel}$ may be handled following the same outline.

\bold{Extending the section.}
Consider the Grassmanian $ \Gr^\circ(2,T\cM_{m_0}) $ of real $ 2 $-planes in $ T\cM_{m_0} $ whose projection to $ H^1 $ is a symplectically non-degenerate $ 2 $-plane. Note that this Grassmanian is an open subset of the full Grassmanian of $ 2 $-planes.

Consider the map $(X,\omega) \mapsto \operatorname{span}(\Re(\omega), \Im(\omega))$ from a simply connected neighbourhood $U$ of  $m_0$ in $\cM$ to $ \Gr^\circ(2,T\cM_{m_0}) $. The fibers of this map   are  connected components of the intersection of $\GL_2(\bR)$ orbits with $U$. Thus we may say that the set of  $\GL_2(\bR)$ orbits near $m_0$ is locally modeled on $\Gr^\circ(2,T\cM_{m_0}) $.

Since the polynomials defining $\phi$ are $\SL_2(\bR)$ invariant, $\phi$ defines a function on the image of $U$ in  $ \Gr^\circ(2,T\cM_{m_0}) $.
We now see that this function can be extended to all of $ \Gr^\circ(2,T\cM_{m_0}) $.

\begin{prop}
	\label{prop:phi_ext}
	Let $ \mathbf{H}_{m_0} $ denote the trivial bundle over $ \Gr^\circ(2,T\cM_{m_0}) $.
	Then the expression in \eqref{eqn:loc_form_eqvt_section} defines an algebraic section $ \phi_{ext,m_0} $ of  $\mathbf{H}_{m_0}$ defined on the entire Grassmanian $ \Gr^\circ(2,T\cM_{m_0}) $. 
\end{prop}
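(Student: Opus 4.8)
The plan is to reinterpret the right-hand side of \eqref{eqn:loc_form_eqvt_section} as a single rational expression in the $2$-plane, defined on all of $\Gr^\circ(2,T\cM_{m_0})$, and then to read off algebraicity from the fact that its denominator never vanishes there. First I would fix the simply connected neighbourhood $U$ of $m_0$ together with the flat local sections $\{s_l\}$ furnished by Theorem~\ref{thm:phi_polynomial}, and use parallel transport inside $U$ to trivialize $\mathbf{H}|_U\cong U\times\mathbf{H}_{m_0}$. Under this identification each $s_l$ becomes a constant vector $v_l\in\mathbf{H}_{m_0}$, the period coordinates $(\boldx,\boldy)$ of a point of $U$ become a pair of vectors in $T\cM_{m_0}$ whose span lies in $\Gr^\circ(2,T\cM_{m_0})$ (the symplectic pairing of $p(\boldx),p(\boldy)$ is the positive area), and $\phi|_U$ becomes, almost everywhere, the map
\[
(\boldx,\boldy)\longmapsto \sum_l v_l\,\frac{P_l(\boldx,\boldy)}{A(\boldx,\boldy)^{k_l}} .
\]

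Next I would observe that this expression makes sense for \emph{any} pair $(\boldx,\boldy)\in T\cM_{m_0}\times T\cM_{m_0}$ with $A(\boldx,\boldy)\neq 0$: it is an $\mathbf{H}_{m_0}$-valued rational function, regular on the Zariski-open locus $\{A\neq 0\}$, since $A(\boldx,\boldy)=\langle p(\boldx),p(\boldy)\rangle$ is a quadratic polynomial and the $P_l$ are polynomials. Consider the map
\[
q\colon \{(\boldx,\boldy)\ :\ A(\boldx,\boldy)\neq 0\}\longrightarrow \Gr^\circ(2,T\cM_{m_0}),\qquad (\boldx,\boldy)\mapsto\operatorname{span}(\boldx,\boldy),
\]
which is surjective onto $\Gr^\circ(2,T\cM_{m_0})$ exactly because a $2$-plane in $T\cM_{m_0}$ has symplectically nondegenerate image in $H^1$ if and only if $A$ is nonzero on one (equivalently every) basis. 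The fibre of $q$ over a plane $\Pi$ is the set of all ordered bases of $\Pi$, so to descend the expression along $q$ I need its invariance under the natural $\GL_2(\bR)$-action on frames. For $g\in\SL_2(\bR)$ this is the $\SL_2(\bR)$-invariance of $P_l$ and $A$ from Theorem~\ref{thm:phi_polynomial}. For a scaling $g=tI$, the bidegree $(k_l,k_l)$ gives $P_l(t\boldx,t\boldy)=t^{2k_l}P_l(\boldx,\boldy)$ while $A(t\boldx,t\boldy)=t^2A(\boldx,\boldy)$, so the ratio is unchanged; and for the reflection $(\boldx,\boldy)\mapsto(\boldx,-\boldy)$ one has $P_l(\boldx,-\boldy)=(-1)^{k_l}P_l(\boldx,\boldy)$ and $A(\boldx,-\boldy)=-A(\boldx,\boldy)$, so the ratio is again unchanged. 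Since these elements generate $\GL_2(\bR)$, the expression is constant on the fibres of $q$ and factors through a map $\phi_{ext,m_0}\colon \Gr^\circ(2,T\cM_{m_0})\to\mathbf{H}_{m_0}$.

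It remains to see that $\phi_{ext,m_0}$ is algebraic and extends $\phi$. For algebraicity I would work in the standard affine charts of the Grassmannian: in such a chart a plane is the graph of a linear map, so there is a basis $(\boldx,\boldy)$ depending polynomially on the chart coordinates, and substituting it into the expression above exhibits $\phi_{ext,m_0}$ as a regular $\mathbf{H}_{m_0}$-valued function on that chart, the denominator $A(\boldx,\boldy)^{k_l}$ being invertible throughout $\Gr^\circ(2,T\cM_{m_0})$; hence $\phi_{ext,m_0}$ is an algebraic section of the trivial bundle $\mathbf{H}_{m_0}$. Finally, the map $U\to\Gr^\circ(2,T\cM_{m_0})$, $(X,\omega)\mapsto\operatorname{span}(\Re\omega,\Im\omega)$, is a submersion, hence open, and by construction $\phi_{ext,m_0}$ pulled back along it agrees (a.e.) with $\phi$ in the flat trivialization on $U$; this is precisely the statement that the expression in \eqref{eqn:loc_form_eqvt_section} defines the algebraic section $\phi_{ext,m_0}$ on the whole of $\Gr^\circ(2,T\cM_{m_0})$.

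The main obstacle is bookkeeping rather than a genuine difficulty: one must correctly line up the flat sections $s_l$ (which only exist near $m_0$, but become constant vectors after trivializing $\mathbf{H}$), the polynomials $P_l$ and $A$ (which have to be seen to descend from frames to planes), and the identification of the quotient $\{A\neq 0\}/\GL_2(\bR)$ with $\Gr^\circ(2,T\cM_{m_0})$. The only input beyond Theorem~\ref{thm:phi_polynomial} is the homogeneity-and-sign computation upgrading $\SL_2(\bR)$-invariance to $\GL_2(\bR)$-invariance, which relies on the precise bidegree $(k_l,k_l)$ of the $P_l$.
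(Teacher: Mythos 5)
Your proof is correct and takes essentially the same route as the paper: interpret \eqref{eqn:loc_form_eqvt_section} as a function on the pairs $(\boldx,\boldy)$ with $A(\boldx,\boldy)\neq 0$ and pass to the quotient of that frame locus by change of basis to land on $\Gr^\circ(2,T\cM_{m_0})$. You only fill in what the paper's short argument leaves implicit, namely the homogeneity and sign checks that upgrade the stated $\SL_2(\bR)$-invariance of the $P_l$ and $A$ to invariance of the ratio under all of $\GL_2(\bR)$ (scalings via bidegree $(k_l,k_l)$, orientation reversal via the $(-1)^{k_l}$ cancellation), and the verification of algebraicity in standard affine charts where the denominator $A^{k_l}$ is invertible.
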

\begin{proof}
The expression in \autoref{eqn:loc_form_eqvt_section} defines a function on the space of points  $(\boldx,\boldy)$ in $T\cM_{m_0}\otimes \bC$ that satisfy $A(\boldx,\boldy)\neq 0$. This function is $\GL_2^+(\bR)$ invariant. The Grassmanian $ \Gr^\circ(2,T\cM_{m_0}) $ is the quotient of the   space by $\GL_2^+(\bR)$. 
\end{proof}

\bold{The Grassmanian.}
Define  $G_{T\cM} \subset \GL(T\cM)$ to be the subgroup which acts as the identity on $ (\ker p) \cap T\cM$ and by symplectic transformations on $ p(T\cM) $. (Later we will show that $G_{T\cM}$ is the algebraic hull of $T\cM$, justifying the notation.) 

The group  $G_{T\cM}$ acts transitively on $\Gr^\circ(2,T\cM_{m_0})$, so 
\begin{align}
\label{eqn:Gr_GS}
\Gr^\circ(2,T\cM_{m_0}) = G_{T\cM}/\Stab_T
\end{align}
is a homogenous space, where $T$ denotes a $2$-plane $T\subset T\cM_{m_0}$. To describe the stabilizer $\Stab_{T}$, consider the symplectic-orthogonal decomposition $(p(T\cM))_{m_0} = p(T) \oplus p(T)^\perp$.
Then $\Stab_T$ surjects onto $\Sp(p(T))\times \Sp(p(T)^\perp)$ with kernel a unipotent subgroup.

\bold{Compatibility with monodromy.}
Because the section $ \phi_{ext,m_0} $ defined by equation \eqref{eqn:loc_form_eqvt_section} \ann{\textbf{Referee:} ``$ \phi_{ext,m_0} $ " is this extension of $\phi$ from equation (3.4)?\\\textbf{Authors:} Yes. Clarification added.} is polynomial, its equivariance properties extend to Zariski closures, as the next result shows.

\ann{\textbf{Referee:} is the cocycle that gives the action on $s_l$ a rational function?\\\textbf{Authors:} The action is by linear transformations on $\mathbf{H}_{m_0}\oplus T\cM_{m_0}.$ The linear action on $T\cM_{m_0}$ induces an algebraic action on $\Gr^\circ(2,T\cM_{m_0})$, as can be seen from the Plucker embedding. Hence we obtain an algebraic action on the total space $\mathbf{H}_{m_0}\times \Gr^\circ(2,T\cM_{m_0})$, which induces an algebraic action on sections by precomposition.}
\begin{prop}
	\label{prop:eqvt_Grass}
	Let $\phi$ be a $\GL_2^+(\bR)$-equivariant section of $\mathbf{H}$ over $\cM$.
	By Proposition~\ref{prop:phi_ext} it gives rise to an algebraic section $\phi_{ext,m_0}$ of the trivial bundle $\mathbf{H}_{m_0}$ over $\Gr^\circ(2,T\cM_{m_0})$.
	
	Then $ \phi_{ext,m_0} $ is equivariant for the simultaneous action of the Zariski closure of the monodromy inside $\GL\left(\mathbf{H}_{m_0}\right)\times \GL (T\cM_{m_0})$ (as given in \eqref{eqn:equiv_prop} below).
\end{prop}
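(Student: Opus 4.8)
The plan is to establish the desired equivariance first for the monodromy group itself, by working on a cover of $\cM$ on which the relevant local systems trivialize, and then to promote it to the Zariski closure of monodromy using that $\phi_{ext,m_0}$ is algebraic.

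First I would pass to the universal cover $\pi\colon\widetilde\cM\to\cM$, on which $H^1_{rel}$ — and hence every tensor construction $\mathbf{H}$ together with the flat subbundle $T\cM$ — becomes trivial and may be identified with the fibers $\mathbf{H}_{m_0}$, $T\cM_{m_0}$ at $m_0$; the deck group $\Gamma$ then acts on these through the monodromy representations $\rho_{\mathbf{H}}$ and $\rho_{T\cM}$. Two preliminary facts: since zeros are labeled, $\rho_{T\cM}(\gamma)$ fixes $(\ker p)\cap T\cM$ pointwise and acts symplectically on $p(T\cM)$, so $\rho_{T\cM}(\Gamma)\subset G_{T\cM}$, and $G_{T\cM}$ acts algebraically on $\Gr^\circ(2,T\cM_{m_0})$; and, by the definition of period coordinates, the developing map lifts to a $\rho_{T\cM}$-equivariant map $\widetilde\cM\to\{A\ne 0\}\subset T\cM_{m_0}\otimes\bC$, which, composed with the quotient by $\GL^+_2(\bR)$ identifying $\{A\ne 0\}/\GL^+_2(\bR)$ with $\Gr^\circ(2,T\cM_{m_0})$, gives a $\rho_{T\cM}$-equivariant map $\delta\colon\widetilde\cM\to\Gr^\circ(2,T\cM_{m_0})$ whose restriction to $\pi^{-1}(U)$ has image the set $\Omega$ appearing in the discussion before Proposition~\ref{prop:phi_ext} — a nonempty open, hence Zariski dense, subset of $\Gr^\circ(2,T\cM_{m_0})$.

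Next I would combine three facts: (i) by Theorem~\ref{thm:phi_polynomial} the measurable section $\phi$ agrees a.e.\ near $m_0$ with the algebraic section $\phi_{ext,m_0}$, so the lift $\widetilde\phi$ of $\phi$ to $\widetilde\cM$ satisfies $\widetilde\phi=\phi_{ext,m_0}\circ\delta$ a.e.\ on $\pi^{-1}(U)$; (ii) since $\phi$ is a single-valued section of $\mathbf{H}$ over $\cM$, its lift satisfies $\widetilde\phi(\gamma\cdot\tilde m)=\rho_{\mathbf{H}}(\gamma)\,\widetilde\phi(\tilde m)$ for every $\gamma\in\Gamma$ and a.e.\ $\tilde m$; (iii) $\delta(\gamma\cdot\tilde m)=\rho_{T\cM}(\gamma)\cdot\delta(\tilde m)$ for every $\gamma$. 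Together these yield, for every $\gamma\in\Gamma$ and a.e.\ $W\in\Omega$,
\[
\phi_{ext,m_0}\bigl(\rho_{T\cM}(\gamma)\cdot W\bigr)=\rho_{\mathbf{H}}(\gamma)\cdot\phi_{ext,m_0}(W).
\]
As both sides are algebraic sections of the trivial bundle $\mathbf{H}_{m_0}$ over $\Gr^\circ(2,T\cM_{m_0})$ and $\Omega$ is Zariski dense, this identity persists for all $W\in\Gr^\circ(2,T\cM_{m_0})$.

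Finally, to reach the Zariski closure I would consider the set
\[
S=\bigl\{(B,A)\in\GL(\mathbf{H}_{m_0})\times G_{T\cM}\ :\ \phi_{ext,m_0}(A\cdot W)=B\cdot\phi_{ext,m_0}(W)\ \text{for all }W\in\Gr^\circ(2,T\cM_{m_0})\bigr\}.
\]
For each fixed $W$ the condition $\phi_{ext,m_0}(A\cdot W)=B\cdot\phi_{ext,m_0}(W)$ is Zariski closed in $\GL(\mathbf{H}_{m_0})\times G_{T\cM}$, since $G_{T\cM}$ acts algebraically on $\Gr^\circ(2,T\cM_{m_0})$ and $\phi_{ext,m_0}$ is algebraic; hence $S$, the intersection of these over all $W$, is Zariski closed in the Zariski closed subgroup $\GL(\mathbf{H}_{m_0})\times G_{T\cM}$, and therefore in $\GL(\mathbf{H}_{m_0})\times\GL(T\cM_{m_0})$. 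By the previous step $S$ contains the image of the monodromy homomorphism $\gamma\mapsto(\rho_{\mathbf{H}}(\gamma),\rho_{T\cM}(\gamma))$, hence it contains its Zariski closure — which is exactly the group in the statement — and unwinding the definition of $S$ gives the asserted equivariance \eqref{eqn:equiv_prop}. The only genuinely non-formal ingredient is Theorem~\ref{thm:phi_polynomial}; the point calling for the most care is the preliminary check that the monodromy, and hence its Zariski closure, preserves the open Grassmanian $\Gr^\circ(2,T\cM_{m_0})$ so that the equivariance relation is even meaningful — and this is precisely where the symplecticity of the Kontsevich--Zorich cocycle on $H^1$ is used.
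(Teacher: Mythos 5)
Your proof is correct and takes essentially the same route as the paper's: your passage to the universal cover, with the developing map $\delta$ intertwining the deck action with $\rho_{T\cM}$ and the lift $\widetilde\phi$ intertwining it with $\rho_{\mathbf{H}}$, is precisely what the paper condenses into ``going around a loop $\gamma$ and comparing results,'' which yields equation \eqref{eqn:equiv_prop}, and your closing step (the Zariski-closed set $S$ of pairs $(B,A)$ satisfying the equivariance contains the monodromy image, hence its Zariski closure) is the same observation the paper states as ``the equality of rational functions holds for all $\gamma$ in the monodromy, therefore for all elements of the Zariski closure.'' The only point worth tightening is your claim that $\widetilde\phi=\phi_{ext,m_0}\circ\delta$ holds a.e.\ on all of $\pi^{-1}(U)$ rather than just on one sheet: as stated this is equivalent to the equivariance you are trying to establish, so to avoid circularity you should either invoke real-analytic continuation on the connected cover $\widetilde\cM$ (both sides are real-analytic and agree on the sheet through $\tilde m_0$), or argue as the paper implicitly does by applying \eqref{eqn:loc_form_eqvt_section} in the chart on each sheet and recording how the chart itself is twisted by $\rho_{\mathbf{H}}(\gamma)$ and $\rho_{T\cM}(\gamma)$.
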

\begin{proof}
	Given the local description of an equivariant section $\phi$ in \eqref{eqn:loc_form_eqvt_section}, consider its behavior under a change of chart.
	Both the sections $s_l$ and the coordinates $\boldx,\boldy$ will change according to the change of coordinates map.
	Going around a loop $\gamma$ in the affine manifold $\cM$ and comparing results, it follows that
	\begin{align}
	\label{eqn:equiv_prop}
	\sum_l s_l \cdot \frac{P_l(\boldx,\boldy)} {A(\boldx,\boldy)^{k_l}} 
	= \sum_l (\rho_{\mathbf{H}}(\gamma)s_l )\cdot \frac{P_l(\rho_{T\cM}(\gamma)\boldx, \rho_{T\cM}(\gamma)\boldy)} {A(\rho_{T\cM}(\gamma)\boldx,\rho_{T\cM}(\gamma)\boldy)^{k_l}}.
	\end{align}
	Here $\rho_\bullet(\gamma)$ denotes the monodromy matrix corresponding to $\gamma$.
	On $s_l$ it acts by the appropriate cocycle on $\mathbf{H}_{m_0}$ (denoted $\rho_{\mathbf{H}}$ for brevity); on $(\boldx,\boldy)$ it acts via the representation on $T\cM$.

	The equality of rational functions in \eqref{eqn:equiv_prop} holds for all $ \gamma $ in the monodromy, therefore it holds for all elements of the Zariski closure of monodromy.
\end{proof}

We can now prove the main result, Theorem~\ref{T:AlgHull}.

\begin{proof}[Proof of Theorem~\ref{T:AlgHull}.]
	From the definition of algebraic hull, it is clear that it is contained in the Zariski closure of monodromy.
	Indeed, any flat tensor is automatically $ \GL_2^+(\bR) $-invariant.
	Additionally, the tautological plane (in either $ H^1 $ or $ H^1_{rel} $) is also invariant under the $ \GL_2^+(\bR) $-action, so the algebraic hull must be contained in its stabilizer.
	In particular, the algebraic hull is contained in the intersection of these two groups.
	
	We need to show that conversely, the algebraic hull contains the intersection of the Zariski closure of monodromy and the stabilizer of the tautological plane.
	Suppose therefore that $ \phi $ is a global tensor in $ \mathbf{H} $ (some tensor construction on $ H^1 $ or $ H^1_{rel} $) and that $ \phi $ is $ \GL_2^+(\bR) $-equivariant.
	
	Fix a basepoint $ m_0\in \cM$. By Proposition~\ref{prop:phi_ext}, $\phi $ extends to an algebraic section $ \phi_{m_0,ext}$ of the trivial bundle $ \mathbf{H}_{m_0} \to \Gr^\circ(2,T\cM_{m_0}) $.
	Note that $ m_0 $ gives a point in $ \Gr^\circ(2,T\cM_{m_0}) $ corresponding to its $ \GL_2^+(\bR) $-orbit (i.e. the value of the tautological plane at $ m_0 $).
	By construction $ \phi_{m_0,ext}(m_0) = \phi(m_0) $.
	
	By Proposition~\ref{prop:eqvt_Grass} the section $ \phi_{m_0,ext} $ is equivariant for the Zariski closure of monodromy acting simultaneously on the Grassmanian and on the bundle over it, i.e. for any such $ \gamma $ we have
	\begin{align*}
	\phi_{m_0,ext}(\gamma \cdot p) = \gamma\cdot \phi_{m_0,ext}(p).
	\end{align*}
	Now if $ \gamma $ is also in the stabilizer of the tautological plane, the above equation becomes (for $ p=m_0 $) simply $ \phi_{m_0,ext}(m_0)= \gamma\cdot  \phi_{m_0,ext}(m_0) $.
	
	Using that $ \phi_{m_0,ext}(m_0) = \phi(m_0) $, this implies $ \phi(m_0) =\gamma \cdot \phi(m_0)  $.
	This invariance of $ \phi $ implies that the algebraic hull contains the intersection of the Zariski closure of monodromy and the stabilizer of the tautological plane.
\end{proof}

\section{Monodromy}\label{S:monodromy}

\subsection{Setup}
We keep the notation for bundles as in the beginning of Section~\ref{sec:computing_alg_hull}, in particular we shall use the short exact sequence of cohomology bundles \eqref{eqn:H1bdl_ses}. 

Consider a fixed affine invariant manifold $\cM$ in a stratum of flat surfaces $\cH$. Define $\ker(p)_\cM$ to be $\ker(p)\cap T\cM$. We have the short exact sequences 
\begin{align}
0 \to \ker(p)_\cM \to T\cM \xrightarrow{p} p(T\cM)\to 0
\label{eqn:TM_ses}
\intertext{of bundles and}
0 \to U \to G_{T\cM} \xrightarrow{p} G_{p(T\cM)} \to 0
\label{eqn:TM_gp_ses}
\end{align}
of monodromies.
The groups $G_{T\cM}$ and $G_{p(T\cM)}$ are the Zariski closures of monodromies on the corresponding bundles, while $U$ denotes the kernel of the projection.
Note that $U$ is naturally a subgroup of transformations that act by the identity when projected to $p(T\cM)$.
This last group is naturally identified with $\Hom(p(T\cM),\ker(p)_\cM)$, which will be abbreviated $U_\cM$.

In Section~\ref{ssec:monodromy_computation} we shall compute the Zariski closure of the two monodromy groups.
Namely, we will see that on absolute cohomology we have $G_{p(T\cM)}=\Sp(p(T\cM))$, i.e. we get the full symplectic group.
On relative cohomology, we will see that the group is as large as it can be, namely the kernel $U$ is all of $U_\cM$.

Additionally, we will see that on bundles other than those coming from $\cM$, the monodromy is ``decoupled'' from that on the tangent bundle to $\cM$.
A precise statement is Proposition~\ref{prop:monodromy_decoupling}.

\begin{remark}[On Zariski closures]\ann{\textbf{Referee:} please make use of standard terminology -- given a
subset, say of $\bA^n$, one usually defines the Zariski closure of this subset and then discusses various rationality questions e.g. the field of definition of the variety thus obtained.\\\textbf{Authors:} Added clarification.}
	Throughout this paper, Zariski closures are taken with respect to $ \bR $, as opposed to $ \bQ $ (which could be larger).
	Concretely, the Zariski closure of a subset of $GL_n(\bR)\subset \bR^{n^2}$ is the intersection of $GL_n(\bR)$ and the zero locus in $\bR^{n^2}$ of all \emph{real} polynomials vanishing on the set.  
\end{remark}

\subsection{Monodromy of the Kontsevich--Zorich cocycle}
\label{ssec:monodromy_computation}

\bold{Monodromy and absolute cohomology.}
It follows from \cite{AEM} that the Kontsevich--Zorich cocycle on $H^1$ or any of its tensor powers is reductive, i.e. any invariant bundle has a complement.
This gives the decomposition of $H^1$ over $\cM$
\begin{align}
\label{eqn:ssimple_dec}
H^1 = p(T\cM) \oplus \left(\bigoplus_\iota p(T\cM)_{\iota}\right) \oplus V.
\end{align}\ann{\textbf{Referee:} please recall which number field is relevant here.\\\textbf{Authors:} Done.}
The bundles $p(T\cM)_{\iota}$ are Galois-conjugates of $p(T\cM)$ and $V$ is the symplectic (as well as Hodge) orthogonal to the other spaces. There is one $p(T\cM)_{\iota}$ for each non-identity embedding of the field of affine definition of $\cM$ to $\bR$; see \cite{AW_field} for more details.
The list of possible monodromy groups, up to compact factors is given in \cite[Thm 1.2]{sfilip_zero}.
In particular, the next result holds.

\begin{thm}[{\cite[Corollary 1.7]{sfilip_zero}}]
	\label{thm:Zar_cl_pTM}
	The Zariski closure of monodromy on $p(T\cM)$ or any of its Galois conjugates $ p(T\cM)_\iota $ is the full symplectic group $\Sp(p(T\cM))_\iota$. 
\end{thm}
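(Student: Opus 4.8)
The plan is to identify the bundle in question as an absolutely irreducible symplectic local system and then feed it into the classification of possible monodromy groups for the Kontsevich--Zorich cocycle. Write $W$ for $p(T\cM)$ or for one of its Galois conjugates $p(T\cM)_\iota$. Since monodromy preserves the (Galois-conjugated) cup-product pairing, which is symplectic and non-degenerate on $W$, the inclusion $G_W\subseteq\Sp(W)$ holds for free, and the content is the reverse inclusion.

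First I would record the algebraic structure of $W$ as a flat bundle. Semisimplicity of the cocycle on $H^1$ \cite{AEM}, together with the analysis of the field of affine definition $\bk$ (which by \cite{AW_field} is totally real), shows that $p(T\cM)$ is $\bk$-simple with $\End_{\mathrm{flat}}(p(T\cM))=\bk$; hence each real-conjugate summand $W=p(T\cM)_\iota$ has $\End_{\mathrm{flat}}(W)=\bR$. Equivalently $W\otimes_\bR\bC$ is an absolutely irreducible $G_W(\bC)$-module, self-dual through a symplectic form, so $W\otimes\bC$ is a standard representation and $G_W(\bC)^{\mathrm{der}}\subseteq\Sp(W\otimes\bC)$.

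Next I would invoke the classification of possible real Zariski closures of monodromy for a simple factor of the Kontsevich--Zorich cocycle, \cite[Thm.~1.2]{sfilip_zero}. Because $W$ underlies a polarized weight-one variation of Hodge structure, the semisimple part of $G_W$ is, up to compact factors, a product of almost-simple real groups of Hermitian type obtained by restriction of scalars over totally real subfields, each factor of type III ($\Sp$), type I ($\mathrm{SU}(p,q)$) or type II ($\mathrm{SO}^*(2m)$). Absolute irreducibility together with $\End_\bR(W)=\bR$ forces a single factor (no nontrivial restriction of scalars) and excludes types I and II, since the real local systems underlying the standard representations of $\mathrm{SU}(p,q)$ and of $\mathrm{SO}^*(2m)$ carry, respectively, a flat complex structure and a flat quaternionic structure, contradicting $\End_\bR(W)=\bR$. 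Hence the non-compact part of $G_W$ is exactly $\Sp(W)$.

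The remaining point, which I expect to be the main obstacle, is to exclude a nontrivial compact factor, i.e.\ the possibility that $G_W$ acts on $W\otimes\bC\cong W_1\otimes W_c$ with $W_c$ a nontrivial representation of a compact group. This cannot be decided by representation theory alone and requires a Hodge-theoretic input: a compact monodromy factor yields a flat piece of the Hodge filtration, that is, a direction of the period map of $\cM$ along which the variation of Hodge structure is constant, whereas every direction of $p(T\cM)$ is realized by a genuine deformation of the flat structure (note that, since $p(T\cM)$ is simple and contains the nonzero tautological plane $p(T)$, parallel transports of $p(T)$ span $p(T\cM)$, and the Galois-conjugate statement spans $W$). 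Concretely, Forni's positivity of the second fundamental form on the tautological plane, combined with $\GL_2^+(\bR)$-equivariance and this spanning property, shows the period map is an immersion along the $p(T\cM)$-directions and therefore admits no flat sub-Hodge-structure; alternatively one may use that cylinder shears act by unipotents on $T\cM$, hence on $W$, and so die in any compact factor. Either way $G_W=\Sp(W)$. The representation-theoretic steps above are essentially formal once \cite[Thm.~1.2]{sfilip_zero} is in hand; it is the non-degeneracy of the variation in the tautological directions that does the real work.
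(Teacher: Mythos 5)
Your proposal takes a genuinely different route from the paper. The paper proves this theorem simply by citing \cite[Cor.~1.7]{sfilip_zero}, and in the remark that follows sketches an alternative argument via cylinder shears: by \cite[Lemma~4.6]{MirWri} one finds a surface in $\cM$ admitting an equivalence class of $\cM$-parallel cylinders with rationally related moduli, the corresponding twist produces a nontrivial symplectic transvection on $p(T\cM)^\vee$, and then total irreducibility (from \cite{AW_field}) together with a Kazhdan--Margulis lemma forces Zariski density in the full symplectic group. Your proposal instead attempts to re-derive the cited corollary from the classification \cite[Thm.~1.2]{sfilip_zero}, passing through the assertion $\End_\bR(W)=\bR$ and an exclusion of compact factors.

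There are two genuine gaps in your route. First, $\End_\bR(W)=\bR$ is asserted but not established: \cite{AW_field} gives total (strong) irreducibility of the monodromy on $p(T\cM)$, and a totally irreducible real representation can still have endomorphism algebra $\bC$ or $\bH$ (for instance $\SL_2(\bC)$ on $\bR^4$), so the exclusion of $\mathrm{SU}(p,q)$ (type I) and $\mathrm{SO}^*(2m)$ (type II) does not follow from what you have invoked. Second, the compact-factor exclusion is under-developed in both of its proposed forms: Forni positivity gives non-degeneracy of the second fundamental form on the tautological plane, but you would need to propagate this to all of $p(T\cM)$ and, more seriously, to the Galois conjugates $p(T\cM)_\iota$, where there is no tautological plane and the $\GL_2^+(\bR)$-action does not lift; and the cylinder-shear remark shows only that a unipotent must project to the identity in a putative compact factor, not that the factor is trivial. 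These are exactly the points at which \cite[Cor.~1.7]{sfilip_zero} does the substantive work, so your outline recovers the skeleton of the argument but leaves its crux unproven.
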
\ann{\textbf{Referee:} Is the Zariski closure of the monodromy on compliment of $V$, from equation (4.2), a restriction of scalars group? (I could not find this explicated in the reference provided either. Moreover, statements like the algebraic hull has only one non-compact factor on an $\mathbb{R}$-irreducible piece appear in the reference -- this was confusing to me: shouldn't the
group be $\bR$-almost simple on an $\bR$-irreducible piece?)\\ \textbf{Authors:} Monodromy on $\bigoplus_\iota p(T\cM)_\iota$ can be viewed as a restriction of scalars group from monodromy on $p(T\cM)$. \\
Regarding your parenthetical question: For examples with multiple factors, see Remark \ref{R:compactfactors}. The product of two groups can have irreducible representations.}

\begin{remark}
This particular consequence of \cite{sfilip_zero} can also be derived as follows, assuming familiarity with \cite{Wcyl}. By \cite[Lemma 4.6]{MirWri}, $\cM$ contains a surface with an  equivalence class of $\cM$-parallel cylinders all of whose moduli are rationally related. Let $\alpha_i$ be the core curves of the cylinders in the given equivalence class. The twist on this equivalence class gives a closed loop in $\cM$, whose monodromy on $H_1$ is a composition of powers of the Dehn twists $\gamma\mapsto \gamma+ \alpha_i\langle \alpha_i , \gamma\rangle$. On $p(T\cM)^\vee$, which may be viewed as a subbundle of $H_1$, all the $\alpha_i$ are collinear by the definition of $\cM$-parallel, hence the monodromy has the form  $\gamma\mapsto \gamma+ C \alpha_1\langle \alpha_1 , \gamma\rangle$. A short argument of Kazhdan-Margulis gives that any subgroup of the symplectic group that is totally irreducible and contains such a transformation must be Zariski dense, compare to \cite[page 250]{Etale}. By \cite{AW_field}, the monodromy of $p(T\cM)$ is totally irreducible. 
\end{remark}


%
%
%

\bold{Monodromy and relative cohomology.}
The monodromy on $T\cM$ surjects onto monodromy on $p(T\cM)$ and we would like to understand the unipotent kernel $U$ (see \eqref{eqn:TM_gp_ses}).
Recall also that the kernel sits inside the linear transformations that act by identity on $p(T\cM)$, which is naturally identified with $U_\cM:=\Hom(p(T\cM),\ker(p)_\cM)$.
For the Galois-conjugate bundles, we can similarly define $U_{\cM,\iota}:=\Hom(p(T\cM)_\iota,\ker(p)_\cM)$.

\begin{prop}
	\label{prop:Zar_cl_TM}
	The Zariski closure of monodromy on $T\cM$ is $U_\cM \rtimes \Sp(p(T\cM))$.
	In other words, the unipotent part is as large as it can be.
	
	The same statement holds for the Galois-conjugate bundles: the Zariski closure of monodromy is $ U_{\cM,\iota}\rtimes \Sp(p(T\cM)_\iota) $.
\end{prop}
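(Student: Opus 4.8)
The plan is to show that the unipotent radical $U$ of the Zariski closure of monodromy on $T\cM$ fills out all of $U_\cM = \Hom(p(T\cM),\ker(p)_\cM)$; combined with Theorem~\ref{thm:Zar_cl_pTM} (which identifies the reductive quotient with $\Sp(p(T\cM))$), this gives the claimed semidirect product $U_\cM \rtimes \Sp(p(T\cM))$. First I would record that $U$ is automatically a normal subgroup of the full Zariski closure $G_{T\cM}$ — it is the kernel of the projection $p\colon G_{T\cM}\onto G_{p(T\cM)}$ in \eqref{eqn:TM_gp_ses} — and that it is contained in $U_\cM$ by the remarks in the Setup. So $U$ is a $G_{p(T\cM)}$-submodule of $U_\cM = \Hom(p(T\cM),\ker(p)_\cM)$, where the action is through $G_{p(T\cM)} = \Sp(p(T\cM))$ on the first factor and (after passing to the cover where zeros are labeled) trivially on $\ker(p)_\cM$. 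As an $\Sp(p(T\cM))$-module, $\Hom(p(T\cM),\ker(p)_\cM) \cong p(T\cM)^\vee \otimes \ker(p)_\cM$ is a direct sum of $\dim\ker(p)_\cM$ copies of the standard symplectic representation, which is irreducible. Hence $U$, being a subrepresentation, is a sum of some subset of these standard-representation summands: concretely $U = p(T\cM)^\vee \otimes K$ for some subspace $K\subseteq \ker(p)_\cM$, and the task reduces to showing $K = \ker(p)_\cM$, i.e. that for every nonzero functional $\lambda$ on $\ker(p)_\cM$ the composition $\lambda\circ U$ is nonzero.

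The second and main step is to produce enough unipotent monodromy elements to force $K$ to be everything. For this I would use the cylinder-deformation / Rel-twist technology as in the Remark following Theorem~\ref{thm:Zar_cl_pTM}: by \cite[Lemma 4.6]{MirWri} (or the cylinder results of \cite{Wcyl}) $\cM$ contains surfaces with $\cM$-parallel cylinders, and the associated twist loops in $\cM$ have monodromy on $H^1_{rel}$ that is unipotent with nontrivial relative part — explicitly, a twist in a cylinder $C$ with core curve $\alpha$ acts on $H^1_{rel}$ by adding to a relative class $v$ a term proportional to (the relative extension of) $\alpha^\vee$ paired against $p(v)$, so its unipotent part is a nonzero element of $U_\cM$. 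By varying over the (generically) many non-homologous-in-$\cM$ cylinders, and using that $T\cM$ must genuinely be larger than $p(T\cM)$ unless $\ker(p)_\cM = 0$ (in which case the statement is vacuous, the manifold being a Teichm\"uller curve up to the relabeling cover), I would argue that the subspace $K\subseteq \ker(p)_\cM$ spanned by these unipotent directions must be $\Sp(p(T\cM))$-invariant — which it is automatically, $K$ being cut out intrinsically — and nonzero. The crux is upgrading ``nonzero'' to ``everything'': here I would invoke either (i) an irreducibility statement, that $\ker(p)_\cM$ carries no proper invariant subspace compatible with the monodromy action on $T\cM$ together with the Rel foliation — which one can extract from the classification of flat subbundles of $H^1_{rel}$, cf. the use of \cite{AEM} above — or (ii) a direct geometric argument that the relative periods (positions of the zeros) are moved in all independent directions by cylinder deformations supported on a generic surface of $\cM$, so that no functional $\lambda$ annihilates all twist-unipotents. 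I expect step (ii), combined with the observation that $K$ is automatically invariant, to be the cleanest route.

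Finally, for the Galois-conjugate bundles $T\cM_\iota$, essentially the same argument applies verbatim after applying the field embedding $\iota$: Theorem~\ref{thm:Zar_cl_pTM} already gives $G_{p(T\cM)_\iota} = \Sp(p(T\cM)_\iota)$, the module $U_{\cM,\iota} = \Hom(p(T\cM)_\iota,\ker(p)_\cM)$ decomposes as $\dim\ker(p)_\cM$ copies of the (irreducible) standard representation of $\Sp(p(T\cM)_\iota)$, and the Galois-conjugate twist loops — the image under $\iota$ of the same topological twist cycles — provide unipotent monodromy elements with nontrivial image in $U_{\cM,\iota}$, forcing the unipotent radical to be all of $U_{\cM,\iota}$ by the same invariance-plus-nonvanishing argument. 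I anticipate the main obstacle is exactly the passage from ``the twist-unipotents span a nonzero invariant subspace of $\ker(p)_\cM$'' to ``they span all of $\ker(p)_\cM$'', which is where one genuinely needs input about the richness of cylinders on $\cM$ rather than formal representation theory; everything else is bookkeeping with the semidirect-product structure \eqref{eqn:TM_gp_ses} and Theorem~\ref{thm:Zar_cl_pTM}.
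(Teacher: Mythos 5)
Your first step matches the paper: $U$ is an $\Sp(p(T\cM))$-submodule of $\Hom(p(T\cM),\ker(p)_\cM)$, hence of the form $\Hom(p(T\cM),K)$ for some $K\subseteq\ker(p)_\cM$, and one must show $K=\ker(p)_\cM$. The gap is exactly where you flag it, and neither of your proposed escape routes closes it. The observation that $K$ is ``automatically invariant'' is vacuous: after passing to the labeled cover, $\ker(p)_\cM$ has \emph{trivial} monodromy, so every $K$ is invariant and every $\Hom(p(T\cM),K)$ is an $\Sp(p(T\cM))$-submodule; representation theory gives no leverage on which $K$ occurs. Your route (ii) --- that cylinder twists move the Rel directions in all independent directions --- is precisely the nontrivial content that needs proving, and the cylinder lemmas you cite do not give it; what you are implicitly asking for is a structure theorem for flat subbundles of $T\cM$. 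Your route (i) misattributes the required input: \cite{AEM} gives semisimplicity on $H^1$, not a classification of flat subbundles of $T\cM$ (or of $H^1_{rel}$), and is not sufficient on its own.

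The paper argues differently and avoids explicit unipotents. Suppose the unipotent part of monodromy lies in $\Hom(p(T\cM),S)$ for some $S\subseteq\ker(p)_\cM$. Choose a lift $L\subset T\cM_{m_0}$ with $p\colon L\xrightarrow{\sim}p(T\cM)_{m_0}$ and set $(E_S)_{m_0}=\spn(L,S)$. The hypothesis on the unipotent part is exactly what makes parallel transport of this fiber well defined, yielding a flat subbundle $E_S\subset T\cM$ with $E_S\not\subset\ker p$ and $E_S\cap\ker p=S$. The decisive input --- the actual substitute for your route (ii) --- is \cite[Thm.~7.4]{AW_field}: the only proper flat subbundles of $T\cM$ are those contained in $\ker p$. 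Hence $E_S=T\cM$ and $S=\ker(p)_\cM$. For the Galois-conjugate bundles the paper does not re-run the argument; it simply notes that the dimension of the unipotent part is preserved by Galois conjugation and is already maximal, so it is maximal for $(T\cM)_\iota$ too. If you want to repair your proposal, the fix is to replace the cylinder-twist step by the $E_S$ construction plus the citation to \cite[Thm.~7.4]{AW_field}.
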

\begin{proof}
	First, observe that $U_\cM$ has a natural action of the monodromy $\Sp(p(T\cM))$.
	The kernel $U\subset U_\cM$ is invariant under this action, therefore $U = \Hom(p(T\cM),S)$ for some subspace $S\subset \ker(p)_\cM$. 
	We will now see that $S$ in fact equals $\ker(p)_\cM$, thus establishing the claim.
	
	Suppose therefore that $S$ is such that the unipotent part of the monodromy is contained in $\Hom(p(T\cM),S)$.
	We will construct a flat subbundle $E_S\subset T\cM$ as follows.
	
	Fix a point $m_0\in \cM$ and consider a lift $L$ of $p(T\cM)_{m_0}$ to $T\cM_{m_0}$, i.e. a subspace $L\subset T\cM_{m_0}$ such that $p$ is an isomorphism from $L$ to   $p(T\cM)$.
	Define the fiber of $E_S$ at $m_0$ to be $(E_S)_{m_0}=\spn(L,S)$.
	
	Extend now $E_S$ by parallel transport of the fiber at $m_0$ to all of $\cM$.
	By the assumption on monodromy that its unipotent part lies in $\Hom(p(T\cM),S)$, we see that this gives a well-defined extension.
	Note also that $E_S\cap \ker(p)=S$ by construction and $E_S$ is not contained in $\ker(p)$.
	
	However, by \cite[Theorem 7.4]{AW_field}\ann{\textbf{Referee:} please make the reference to the paper [Wri14] more explicit.\\\textbf{Authors:} We added a theorem number.} there are no proper flat subbundles of $T\cM$ other than those contained in $\ker(p)$, therefore $E_S=T\cM$ and thus $S=\ker(p)\cap T\cM$.
	
	The statement for the Galois-conjugate bundles now follows by noting that the dimension of the unipotent part does not change under Galois conjugation, and it is maximal.
\end{proof}

\bold{Decoupling monodromies.}
Let  $ \Gamma $ denote the (orbifold) fundamental group of $ \cM $.
Monodromy representations are denoted $\rho_V:\Gamma\to \GL(V)$, where $V$ is an appropriate bundle (e.g. $T\cM, p(T\cM), H^1$, etc) and their Zariski closures are denoted $G_V$.

\begin{prop}
	\label{prop:monodromy_decoupling}
	Suppose that $W$ is a flat irreducible subbundle of $H^1$ other than $p(T\cM)$, and let $G_W$ be the Zariski closure of the monodromy $\rho_W(\Gamma)$.
	\begin{itemize}
		\item[(i)] The Zariski closure of monodromy on $p(T\cM) \oplus W$ is $\Sp(p(T\cM))\times G_W$.
		\item[(ii)] The Zariski closure of monodromy on $T\cM\oplus W$ is $G_{T\cM}\times G_W$.
		\item[(iii)] Let $ \widetilde{W}:=p^{-1}(W) $ denote the preimage of $ W $ in relative cohomology, and assume $ G_{\tilde{W}} $ is the Zariski closure of monodromy on it.
		Then the Zariski closure of monodromy on $ T\cM \oplus \widetilde{W} $ is $ G_{T\cM}\times G_{\widetilde{W}} $.
	\end{itemize}
\end{prop}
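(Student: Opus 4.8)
The plan is to prove all three parts by the same mechanism: a Goursat-type argument combined with the fact that the representation-theoretic "sizes" of the relevant blocks are forced to be distinct, so that no diagonal-type subgroup can occur in the product. Let me set up part (i) first. The Zariski closure $G_{p(T\cM)\oplus W}$ sits inside $\Sp(p(T\cM))\times G_W$ and surjects onto each factor — onto $\Sp(p(T\cM))$ by Theorem~\ref{thm:Zar_cl_pTM}, and onto $G_W$ by definition of $G_W$. By Goursat's lemma, if the subgroup were proper, there would be a common quotient $Q$ of $\Sp(p(T\cM))$ and of $G_W$ (through which both factors surject) identifying them; equivalently, passing to the quotient by the appropriate normal subgroups, one gets an isomorphism between a nontrivial quotient of $\Sp(p(T\cM))$ and a nontrivial quotient of $G_W$, compatible with the two projections. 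The key point is then to rule this out. Since $p(T\cM)$ is a simple bundle with $\bR$-Zariski closure $\Sp(p(T\cM))$, which is $\bR$-almost-simple, its only nontrivial quotient (up to isogeny) is $\Sp(p(T\cM))$ itself. So I would need the monodromy on $W$ to admit $\Sp(p(T\cM))$ as a quotient through which its action on $W$ factors in a way matching the action on $p(T\cM)$ — but $W$ is a distinct simple subbundle, and the only subbundles of $H^1$ on which $\Sp(p(T\cM))$ acts nontrivially and irreducibly are isomorphic to $p(T\cM)$ (or to its standard representation). Thus $W$ would have to be isomorphic, as a flat bundle, to $p(T\cM)$, contradicting that the decomposition \eqref{eqn:ssimple_dec} is into non-isomorphic isotypic-free pieces — or, more robustly, one invokes the classification of possible monodromy groups and the semisimplicity from \cite{AEM} to conclude $W\not\cong p(T\cM)$ implies no such common quotient exists. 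Hence $G_{p(T\cM)\oplus W}=\Sp(p(T\cM))\times G_W$.

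For part (ii), I would reduce to part (i) by analyzing the unipotent radical. Write $G_{T\cM}=U_\cM\rtimes\Sp(p(T\cM))$ by Proposition~\ref{prop:Zar_cl_TM}. The Zariski closure $G_{T\cM\oplus W}$ projects onto $G_{T\cM}$ and onto $G_W$, and projects onto $G_{p(T\cM)\oplus W}=\Sp(p(T\cM))\times G_W$ by part (i). The only thing left to check is that the full unipotent part $U_\cM\times\{1\}\subset G_{T\cM}\times G_W$ is contained in $G_{T\cM\oplus W}$. Since $G_{T\cM\oplus W}$ surjects onto $G_{T\cM}$, which contains $U_\cM$, some element $(u,w)$ with $u\in U_\cM$ arbitrary lies in $G_{T\cM\oplus W}$; I want to arrange $w=1$. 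Here I would use that $G_W$ is reductive (from \cite{AEM}), so it has no nontrivial unipotent elements other than those in its semisimple part's unipotents — more precisely, I intersect $G_{T\cM\oplus W}$ with the unipotent subgroup $U_\cM\times R_u(G_W)$ and use that the projection to $G_{T\cM}$ maps this onto $U_\cM$, while the structure of a reductive $G_W$ forces $R_u(G_W)=1$; alternatively, take a sufficiently high power or use the exponential map on the Lie algebra level, where the unipotent directions in $\mathfrak{u}_\cM$ are characterized by their weights under $\Sp(p(T\cM))$-action and these weights do not appear in $\mathfrak{g}_W$ since $W$ is not built from $p(T\cM)$ and $\ker(p)_\cM$. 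This gives $U_\cM\times\{1\}\subset G_{T\cM\oplus W}$, and combined with the surjection onto $\Sp(p(T\cM))\times G_W$ from part (i), we get all of $G_{T\cM}\times G_W$.

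Part (iii) follows the same template with $\widetilde W=p^{-1}(W)$ in place of $W$. The extra subtlety is that $\widetilde W$ is an extension $0\to\ker(p)_\cM\to\widetilde W\to W\to 0$, so $G_{\widetilde W}$ itself has a unipotent part lying in $\Hom(W,\ker(p)_\cM)$, and both $G_{T\cM}$ and $G_{\widetilde W}$ have unipotent radicals that act on the common bundle $\ker(p)_\cM$. The point to be careful about is that these two unipotent parts are "independent": $U_\cM=\Hom(p(T\cM),\ker(p)_\cM)$ and the unipotent part of $G_{\widetilde W}$ lands in $\Hom(W,\ker(p)_\cM)$, and since $p(T\cM)$ and $W$ are non-isomorphic simple bundles the $\Sp(p(T\cM))\times G_W$-module $\Hom(p(T\cM),\ker(p)_\cM)\oplus\Hom(W,\ker(p)_\cM)$ has no diagonal submodule, so no Goursat-type coupling of the unipotent directions can occur. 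After quotienting by all unipotent radicals one is reduced exactly to the reductive statement $G_{p(T\cM)\oplus W}=\Sp(p(T\cM))\times G_W$ from part (i). I expect the main obstacle to be this bookkeeping of the unipotent parts in (ii) and (iii) — making rigorous that the $\Sp(p(T\cM))$-isotypic decomposition of the relevant $\Hom$-spaces genuinely prevents any off-diagonal identification — rather than the reductive Goursat step, which is standard once semisimplicity and Theorem~\ref{thm:Zar_cl_pTM} are in hand.
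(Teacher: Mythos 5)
Your Goursat framework is the right skeleton and matches the paper's approach at a structural level, but each of the three parts has a genuine gap at exactly the step where Goursat needs external input. In part (i), once the kernel of $H\to G_W$ is shown to be trivial, you must rule out $H$ being the graph of an isomorphism $\Sp(p(T\cM))\cong G_W$. You try to do this by asserting that the decomposition \eqref{eqn:ssimple_dec} is ``into non-isomorphic isotypic-free pieces,'' but that decomposition does not assert this, and nothing you cite rules out a flat subbundle of $V$ abstractly isomorphic to $p(T\cM)$. The paper closes this step differently: by the classification in \cite[Thm.~1.2]{sfilip_zero}, a symplectic group only occurs in its standard representation, so the group isomorphism would force an isomorphism of \emph{flat bundles} $p(T\cM)\cong W$, hence equality of Lyapunov spectra; this contradicts Forni's spectral gap theorem \cite{Forni_deviations}, which gives $p(T\cM)$ top exponent $1$ and all other exponents in $H^1$ strictly less. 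That Lyapunov-exponent input is the load-bearing ingredient you are missing.

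In part (ii), the argument you give is circular: intersecting $H$ with $U_\cM\times R_u(G_W)=U_\cM\times\{1\}$ and asserting that this intersection surjects onto $U_\cM$ is precisely what needs to be proved, not a consequence of anything you have established. The weight-theoretic alternative (``these weights do not appear in $\mathfrak{g}_W$'') is also unjustified — there is no a priori reason the adjoint representation of $G_W$ avoids the standard representation of $\Sp(p(T\cM))$. The correct mechanism is: the kernel $K=\ker(H\to G_W)$ projects to $\Sp(p(T\cM))\times\{1\}$ surjectively (this \emph{is} where part (i) enters, since it gives surjectivity of $H$ onto $\Sp(p(T\cM))\times G_W$), so $K$ is a normal subgroup of $G_{T\cM}$ surjecting onto $\Sp(p(T\cM))$; normality forces $[U_\cM,K]\subset K\cap U_\cM$, and since commutators of $U_\cM$ with anything covering $\Sp(p(T\cM))$ span all of $U_\cM$, one gets $U_\cM\subset K$. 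Reductivity of $G_W$ is used only to identify the unipotent radical of the product, not to directly arrange $w=1$. Finally, in part (iii) the paper does not decouple the unipotent radicals by a module-theoretic argument as you propose; it shows that $\ker(H\to G_{\widetilde W})$ is a normal subgroup of $G_{T\cM}$ surjecting onto $\Sp(p(T\cM))$, and that such subgroups correspond bijectively to flat subbundles $E\subset T\cM$ surjecting onto $p(T\cM)$, which by \cite[Thm.~5.1]{AW_field} must equal $T\cM$ — a flat-bundle rigidity input your sketch does not invoke and which substantially simplifies the bookkeeping you worry about.
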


\begin{proof}
	To prove (i),\ann{\textbf{Referee:} Please see my question above about Theorem 4.5\\\textbf{Authors:}  $W$ might not be a Galois conjugate of $p(T\cM)$, and therefor we are not considering restriction of scalars groups.} let $H:=\overline{\rho_{p(T\cM)\oplus W}(\Gamma)}$ be the Zariski closure of the monodromy.
	By assumption $H\subset \Sp(p(T\cM))\times G_W$ surjects when projected to either component.
	
	Consider the kernel $K\subset H$ of the surjection $H \onto G_W$.
	Note that projecting $K$ to $\Sp(p(T\cM))$ embeds it as a normal subgroup.
	Indeed, $K$ is normal in $H$, and $H$ surjects onto $\Sp(p(T\cM))$.
	
	Since the symplectic group is simple, $ K $ is either the full group (in which case we are done) or trivial.
	Suppose that $ K $ is trivial.
	
	Then $ H $ is the graph of an isomorphism $ \Sp(p(T\cM)) \to G_W $.
	Therefore $ G_W $ is isomorphic to a symplectic group, and by the classification in \cite[Thm. 1.2]{sfilip_zero} a symplectic group can only occur in the standard representation.
	Thus the isomorphism of monodromy groups gives also an isomorphism of the flat bundles $p(T\cM)$ and $W$.
	It follows that $ p(T\cM) $ and $ W $ have the same Lyapunov exponents.
	This is a contradiction, since $p(T\cM)$ has top Lyapunov exponent $1$, and all other exponents in $H^1$ are strictly smaller by a result of Forni \cite{Forni_deviations}. (It is also possible to derive a similar contradiction by directly using the representation theory of the symplectic group instead of invoking \cite[Thm. 1.2]{sfilip_zero}.)
	
	To prove (ii), let again $H\subset G_{T\cM}\times G_W$ be the Zariski closure of the monodromy.
	Recall the unipotent radical of $G_{T\cM}\times G_W$ is $G^u_{T\cM}$.
	Letting $K$ be the kernel of the map $H\to \Sp(p(T\cM))\times G_W$, it follows that $K\subset G^u_{T\cM}$.
	Since $H$ surjects onto $G_{T\cM}$ by Proposition~\ref{prop:Zar_cl_TM}, it follows that $K$ surjects onto $G^u_{T\cM}$.
	Using part (i), it now follows that $H$ is all of $G_{T\cM}\times G_W$.
	
	To prove (iii), let again $ H\subset G_{T\cM}\times G_{\widetilde{W}} $ be the Zariski closure of monodromy; we want to show $ H $ is the entire product.
	By assumption $ H $ surjects onto both $ G_{\widetilde{W}} $ and $ G_{T\cM} $.
	Let $ K\subset H $ be the kernel of the surjection to $ G_{\widetilde{W}} $.
	As before, $ K $ viewed as a subgroup in $ G_{T\cM} $ is normal and surjects onto $ \Sp(p(T\cM)) $.
	Such subgroups are in bijection with flat bundles $ E\subset T\cM $ which surject onto $ p(T\cM) $, but from \cite[Thm 5.1]{AW_field} we must have $ E=T\cM $, i.e. $ K=G_{T\cM} $.
	To see the bijection between subgroups and subbundles, note that the group is given by automorphism of the fibers which preserve the subbundle (and act symplectically on the quotient).
\end{proof}

\section{Algebraic Hulls along limits}
\label{sec:alg_hull_limits}

Recall that by the results of \cite{EM, EMM}, given any infinite sequence of affine invariant manifolds $\{\cM_i\}$ in a fixed stratum, there exists an affine manifold $\cM$ and a subsequence $\{\cM_{n_i}\}$ such that $\cM_{n_i}\subseteq \cM$, and Lebesgue measure on $\cM_{n_i}$ tends to Lebesgue measure on $\cM$.
In particular, the $\cM_{n_i}$ become dense in $\cM$.

This section will establish that the algebraic hulls of the manifolds $\cM_{n_i}$ and the algebraic hull of $\cM$ eventually agree, up to finite index and compact factors.
We give two proofs -- one of a general ergodic-theoretic flavor, and a second one, based on \HT tensors.
Both proofs imply that outside a finite collection of affine submanifolds $ \cB\subset \cM $, any other affine submanifold $ \cM'\subset \cM $ has the same algebraic hull as $ \cM $, up to compact factors (assuming $ \cB\cap \cM' = \emptyset $).
The proof via \HT tensors gives further information on the locus $ \cB $, whereas the ergodic-theoretic one only implies its existence.

\bold{Finite index, compact factors, and Forni subspaces.}
For the purposes of this section, two groups $ G_1, G_2 $ (contained in the same ambient $ \GL_n $) \emph{agree up to finite index} if their connected components of identity are the same.
The two groups agree \emph{up to compact factors} if there is a common normal subgroup $ N $ such that each of the quotients $ G_i/N $ is compact.

\begin{remark}\label{R:compactfactors}\leavevmode
\begin{enumerate}
	\item Compact factors in the monodromy of the Kontsevich--Zorich cocycle arise in particular when a sub-VHS $V$ of $H^1$ may be written as the tensor product of a weight 1 VHS with Zariski closure of monodromy $G$ and a weight 0 VHS with Zariski closure of monodromy $K$, which must be compact.
	In this case the monodromy of $V$ is contained in $K\times G$, which may act via an irreducible representation (some examples can be found in \cite{FFM}). In the case where the weight 1 VHS is trivial one calls $V$ a Forni subspace \cite{FMZ_zero, Forni_deviations}.
	The results below allow for the possibility that $\cM$ has Zariski closure of monodromy $G\times K$ but $\cM_i$ has monodromy contained in $G\times K'$ with $K' \subsetneq K$. 
	\item \ann{\textbf{Authors:} We added part (2) to Remark \ref{R:compactfactors}. It will be relevant to  the updated proofs of Theorem \ref{thm:limit_absolute}.} For an algebraic group $G$, denote by $G^\zarcon{}$ the minimal normal algebraic subgroup of $G$ such that $G/G^\zarcon{}$ is compact.
	Note that in particular $G^\zarcon{}$ is connected in the Zariski topology.
	To prove that two algebraic groups $G_1,G_2$ agree up to finite index and compact factors, it suffices to check that $G_1^\zarcon{}$ and $G_2^\zarcon{}$ agree.

	The algebraic hull of any factor of the Kontsevich--Zorich cocycle is semisimple, up to compact factors.
	Indeed, from \cite{AEM} it follows that the hull is reductive and from the classification in \cite[Thm.~1.2]{sfilip_zero} it follows that all non-compact factors have to be semisimple.
	In particular, the algebraic hull cannot contain any non-compact abelian factors. 
	
	It then follows that for the algebraic hull $G$ of any factor of the Kontsevich--Zorich cocycle, the group $G^\zarcon{}$ is semisimple.
\end{enumerate}

\end{remark}

\subsection{The case of absolute cohomology}

\begin{thm}
  \label{thm:limit_absolute}
  Suppose $\{\cM_i\}$ is an infinite sequence of affine submanifolds of $\cM$ that equidistribute towards another affine manifold $\cM$.
  
  Let $G_i$ and $G$ be the algebraic hulls of $\cM_i$ and $\cM$ respectively, for the $\GL_2^+(\bR)$-action on the absolute cohomology bundle $H^1$.
  \begin{itemize}
   \item [(i)] We have $G_i\subseteq G$ for all
   sufficiently large $i$.
   \item [(ii)] There exists $N\geq 0$ such that for all $i\geq N$, the groups $G_i$ and $G$ agree up to finite index and compact factors.
  \end{itemize}
\end{thm}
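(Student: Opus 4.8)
The plan is to prove (i) by comparing monodromy groups and applying Theorem~\ref{T:AlgHull}, and (ii) by reducing (via Remark~\ref{R:compactfactors}(2)) to the inclusion $G^{\zarcon}\subseteq G_i$ for $i$ large, which in turn follows from a rigidity statement asserting that $\GL_2^+(\bR)$-equivariant tensors over $\cM_i$ extend to $\cM$. For (i), fix $i$ and a basepoint $m_0\in\cM_i$. Since $\cM_i\subseteq\cM$, the orbifold fundamental group of $\cM_i$ maps to that of $\cM$ and the monodromy on $H^1$ over $\cM_i$ is the restriction of the one over $\cM$; passing to real Zariski closures gives $G_{H^1}(\cM_i)\subseteq G_{H^1}(\cM)$ inside $\GL(H^1_{m_0})$. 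The tautological plane is determined pointwise by $(X,\omega)$, hence its fiber at $m_0$ is the same plane whether computed over $\cM_i$ or over $\cM$. By Theorem~\ref{T:AlgHull}, $G_i=A_{H^1}(\cM_i)=\Stab_{G_{H^1}(\cM_i)}(\text{tautological plane})\subseteq \Stab_{G_{H^1}(\cM)}(\text{tautological plane})=A_{H^1}(\cM)=G$.

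For (ii): from (i) and the fact that $G_i\cap G^{\zarcon}$ is normal in $G_i$ with compact quotient we get $(G_i)^{\zarcon}\subseteq G^{\zarcon}$. Conversely, by Remark~\ref{R:compactfactors}(2) the group $G^{\zarcon}$ is semisimple with no compact factors, hence admits no nontrivial compact quotient, so once $G^{\zarcon}\subseteq G_i$ we get $G^{\zarcon}=(G^{\zarcon})^{\zarcon}\subseteq (G_i)^{\zarcon}$ and thus equality. Since by Chevalley's Theorem~\ref{thm:chevalley} the (reductive) algebraic hull $G_i$ is the common stabilizer of the values at $m_0$ of the $\GL_2^+(\bR)$-equivariant tensors over $\cM_i$, it suffices to show that for $i$ large every such equivariant tensor has $G^{\zarcon}$-invariant value at $m_0$. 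This is deduced from the following rigidity statement: there is a proper, closed, $\GL_2^+(\bR)$-invariant subset $\cB\subsetneq\cM$ (empty when $\cM$ is a stratum) such that every $\GL_2^+(\bR)$-equivariant tensor over an affine submanifold $\cM'\subseteq\cM$ with $\cM'\not\subseteq\cB$ is the restriction of a $\GL_2^+(\bR)$-equivariant tensor over $\cM$. Granting this: only finitely many tensor bundles $\mathbf{H}$ need be considered, since every $G_i$ lies in the fixed group $G$ and subgroups of a fixed algebraic group are cut out by invariant vectors in a fixed finite list of tensor powers (Noetherianity together with Theorem~\ref{thm:chevalley}); and $\cM_i\not\subseteq\cB$ for $i$ large, since otherwise infinitely many $\cM_i$ lie in the proper closed set $\cB$, so that weak-$*$ convergence of the Lebesgue measures forces $\cM\setminus\cB$ to have measure zero, contradicting that it is nonempty and open. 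Hence for $i$ large every equivariant tensor over $\cM_i$ extends to $\cM$, where its value at $m_0$ is invariant under $G=A_{H^1}(\cM)$, and so under $G^{\zarcon}$.

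The rigidity statement is the heart of the argument and is proved using the polynomiality of equivariant sections. By Theorem~\ref{thm:phi_polynomial}, near a point $p\in\cM'$ an equivariant tensor $\phi$ on $\mathbf{H}$ has the form $\phi(\boldx,\boldy)=\sum_l s_l\,P_l(\boldx,\boldy)/A(\boldx,\boldy)^{k_l}$ with $s_l$ flat local sections and $P_l$ homogeneous $\SL_2(\bR)$-invariant polynomials of bounded bidegree in the period coordinates $T\cM'$; as in Propositions~\ref{prop:phi_ext} and \ref{prop:eqvt_Grass} this packages $\phi$ as an algebraic section of a trivial bundle over $\Gr^\circ(2,T\cM'_{p})$, equivariant for the Zariski closure of the monodromy of $\cM'$. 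One then shows that when the second fundamental form of the Hodge bundle of $\cM$, restricted to $T\cM'$, has full rank at a.e.\ point of $\cM'$, the data $(s_l,P_l)$ is forced to be compatible with the monodromy and the full period coordinates $T\cM$ of $\cM$, not merely of $\cM'$ — which yields the desired extension. The set $\cB$ is (contained in) the locus where this rank drops; it is proper, closed and $\GL_2^+(\bR)$-invariant, and using \cite{EMM} and the fact that the rank condition is itself affine-invariant one arranges $\cB$ to be a finite union of proper affine submanifolds, which is the refined form required for Theorem~\ref{T:equidistribution}. An alternative, purely ergodic-theoretic argument proves (ii) by a limiting argument on equivariant tensors, at the cost of not describing $\cB$ explicitly.

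The main obstacle is precisely this rigidity/extension step and the identification of $\cB$: an equivariant tensor over $\cM'$ a priori sees only the smaller period coordinates $T\cM'$ and the smaller monodromy $G_i\subseteq G$, so there is genuinely ``missing'' data that must be recovered before one can extend to $\cM$, and non-degeneracy of the second fundamental form restricted to $T\cM'$ is exactly what supplies it; controlling where this degenerates — and showing the degeneracy locus is small (finitely many proper affine submanifolds, and none for strata) — is where the real work lies. A secondary point, handled by part (i), is uniformity in $i$: one needs the relevant tensor degrees bounded independently of $i$, which holds because every $G_i$ is a subgroup of the single group $G$.
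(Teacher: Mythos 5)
Your proof of part (i) is correct and takes a route different from the paper's: you combine Theorem~\ref{T:AlgHull} with the elementary observation that the monodromy of $\cM_i$ on $H^1$ factors through that of $\cM$, so $G_{H^1}(\cM_i)\subseteq G_{H^1}(\cM)$, and intersecting with the stabilizer of the (common) tautological plane gives $G_i\subseteq G$. The paper instead quotes an analyticity result from \cite{sfilip_ssimple}: any measurable $\GL_2^+(\bR)$-equivariant tensor over $\cM$ is real-analytic, hence restricts to $\cM_i$, giving a reduction of $A_{H^1}(\cM_i)$. Both work; yours is self-contained within the paper, while the paper's version has the advantage of applying verbatim in $H^1_{rel}$ as well.

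There is a genuine gap in your argument for part (ii), and it lies exactly in the ``rigidity/extension'' step you identify as the heart of the matter. You claim that, outside a proper locus $\cB$, \emph{every} $\GL_2^+(\bR)$-equivariant tensor over $\cM'\subseteq\cM$ is the restriction of an equivariant tensor over $\cM$. If this were true, the value at $m_0$ of any equivariant tensor over $\cM_i$ would be invariant under all of $G=A_{H^1}(\cM)$, and combined with part (i) this would force $G_i=G$ \emph{exactly}, not merely up to compact factors. But the paper's Remark~\ref{R:compactfactors}(1) explicitly flags that this can fail: one can have $G$ with a compact factor $K$ and $G_i$ with a strictly smaller compact factor $K'\subsetneq K$. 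In that situation there is an equivariant tensor over $\cM_i$ whose stabilizer is contained in $G\times K'$ but not $G\times K$; such a tensor cannot extend to $\cM$. So the extension statement is false as stated, and no amount of control on the second fundamental form will rescue it, because the obstruction is the compact factor, not degeneracy of period coordinates. What is true (and what the paper's Hodge--Teichm\"uller proof establishes) is the weaker statement that the \emph{value} of any equivariant tensor over $\cM_i$ lies in the bundle of Hodge--Teichm\"uller tensors, on which $A_{H^1}(\cM)$ acts by isometries; this gives a common normal subgroup of $G$ and $G_i$ with compact quotients, without claiming any tensor extends. Your ancillary claim that $\cB$ is empty for strata is also unsupported: for strata the paper concludes exact equality of hulls not because $\cB$ is empty, but because $A_{H^1}(\cM)$ has no compact factors, so ``up to compact factors'' becomes vacuous. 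Finally, your reduction of (ii) to ``$G^{\zarcon}\subseteq G_i$ for $i$ large'' and the surrounding bookkeeping with Remark~\ref{R:compactfactors}(2), SGA3-style finiteness of $\operatorname{Hom}$, and the equidistribution-forces-$\cM_i\not\subseteq\cB$ step are all fine; the missing piece is only the mechanism supplying $G^{\zarcon}$-invariance, which must be the isometry argument rather than extension.
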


\begin{remark}
 By Theorem \ref{thm:Zar_cl_pTM}, the Zariski closure of monodromy for $p(T\cM)$ and its Galois conjugates do not have compact factors, and so the algebraic hull of any of these bundles for $\cM_i$ and $\cM$ are exactly equal for $i$ sufficiently large. (Later this will also apply to $T\cM$.)
  \end{remark}\ann{\textbf{Referee:} Please see
my question above about Theorem 4.5\\\textbf{Authors:} We added a reference to Theorem \ref{thm:Zar_cl_pTM}.}
 
 \subsection{First proof of Theorem \ref{thm:limit_absolute}}
We begin by recalling some useful preliminaries from ergodic theory.

Suppose we have a bundle $ P_F \to X $ over some space $ X $, with fiber $ F $ and structure group $ G $.
In other words, locally on $ X $ we have an isomorphism $ P_F\vert_{U_i} \cong U_i\times F $ (where $ U_i\subset X $) and the gluing maps on overlaps are given by maps $ U_i\cap U_j \to G $.

Suppose now that $ X $ carries an action of a group $ A $, and the action lifts to $ P_F $ by $ G $-maps, i.e. after local trivialization of the bundle, the maps between fibers are in $ G $. (This is independent of the trivialization, since gluings are in $ G $ as well.)
Suppose next that $ s:X\to P_F $ is an $ A $-equivariant section, i.e. $ s(a\cdot x) = a\cdot s(x) $.
Then $ s $ descends to a \emph{map}
\begin{align}
	\label{eqn:section_descent}
s : X\to F/G
\end{align}
where $ F/G $ is the space of $ G $-orbits on $ F $.

The next results, due to Borel--Serre in the algebraic case, and Margulis and Zimmer for measures, give control over spaces of $ G $-orbits (see \cite[Sec. 3.2]{Zimmer_book}).
Throughout, we consider the real points of the corresponding algebraic varieties.

\begin{prop}
	\label{prop:separability}
	Let $G$ be a real-algebraic group, and $V$ an algebraic variety with a $G$-action.
	
	Then the space of $G$-orbits on $V$, with its induced topology, is countably separated (\cite[Def. 2.1.8]{Zimmer_book}).
	Moreover, for any two $G$-orbits in $V$, there is a closed $G$-invariant set which contains one, but not the other.
	
	The same separability properties hold for the space of probability measures on $V$ with the weak topology, for the induced action of $G$.
\end{prop}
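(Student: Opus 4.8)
The statement to prove is Proposition~\ref{prop:separability}, which asserts countable separability of orbit spaces for real-algebraic group actions, a separating-closed-set property between orbits, and the analogous statements for actions on spaces of probability measures.

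\medskip

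The plan is to reduce everything to standard facts about algebraic group actions and then bootstrap to the measure setting. First I would recall that if $G$ is a real-algebraic group acting algebraically on an algebraic variety $V$, then every orbit $G\cdot v$ is locally closed in the Zariski topology (a classical theorem of Chevalley/Rosenlicht: the orbit is open in its closure because it contains a dense open subset of its closure, the complement being a union of lower-dimensional orbits, and then one translates by $G$). Passing to real points and the Hausdorff topology, a Zariski-locally-closed set is locally closed in the real topology, hence is itself a locally compact, second countable Hausdorff space; in particular each orbit is a Borel subset of $V$. Since $V(\bR)$ has a countable basis, the collection of orbit closures (minus smaller orbits), or more simply the fact that $V$ is a countable union of $G$-invariant locally closed pieces stratified by orbit dimension, lets one produce a countable separating family of Borel sets. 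This is precisely the statement that $V/G$ is countably separated in the sense of \cite[Def.~2.1.8]{Zimmer_book}. For the ``closed $G$-invariant set separating two orbits'' claim: given orbits $\mathcal O_1 \ne \mathcal O_2$, at least one, say $\mathcal O_1$, does not lie in the closure $\overline{\mathcal O_2}$ (if each were in the other's closure the closures would be equal, forcing equal dimension and then equality of the orbits since each is open in its closure); then $\overline{\mathcal O_2}$ (Zariski closure, intersected with real points, which is closed in the real topology and $G$-invariant) contains $\mathcal O_2$ but not $\mathcal O_1$.

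\medskip

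For the measure version, I would invoke the Borel--Serre / Margulis--Zimmer machinery referenced in \cite[Sec.~3.2]{Zimmer_book}: the space $\mathcal P(V)$ of probability measures on $V(\bR)$, with the weak-$*$ topology, is a Polish space, and $G$ acts continuously on it. The key input is that for a real-algebraic action, the $G$-orbits on $\mathcal P(V)$ are again locally closed (this is the content of the cited results — it uses that an algebraic group has finitely many ``types'' of invariant measures up to the action, via the structure theory of orbit spaces and the fact that a measure invariant under a subgroup can be pushed to an orbit of a unipotent-by-reductive quotient). Granting that, the same two arguments as above apply verbatim: local closedness of orbits in a Polish space gives countable separation of the quotient, and the non-symmetry of the closure relation gives a closed invariant separating set. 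I would state this cleanly by citing \cite[Sec.~3.2]{Zimmer_book} (specifically the results there attributed to Borel--Serre, Margulis, Zimmer) rather than reproving the local-closedness of orbits on $\mathcal P(V)$.

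\medskip

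The main obstacle is the measure-theoretic part: proving from scratch that $G$-orbits on $\mathcal P(V)$ are locally closed is genuinely delicate (it is the heart of the Borel density / Zimmer cocycle reduction theory). My intention is not to reprove it but to isolate it as the cited black box, so the real work in the write-up is just the clean deduction of countable separation and the separating-set property from local closedness of orbits, in both the variety and the measure settings — a short topological argument once local closedness is in hand. A secondary point to be careful about is the transition from the Zariski to the Hausdorff topology on real points: I should note that a Zariski-locally-closed subset of $V(\bR)$ is locally closed in the analytic topology, so that each orbit, with its subspace topology, is a locally compact second countable space, which is what ``countably separated'' ultimately rests on.
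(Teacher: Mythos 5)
The paper does not actually prove this proposition; it states it as a known result and points to \cite[Sec.~3.2]{Zimmer_book}, attributing the variety case to Borel--Serre and the measure case to Margulis and Zimmer. Your plan --- derive the variety case from local closedness of orbits and cite the measure case as a black box --- is the standard route and essentially what those references contain, so the overall strategy is fine. Two points in the write-up need tightening, though. First, a $G(\bR)$-orbit in $V(\bR)$ is generally \emph{not} Zariski-locally-closed: Chevalley--Rosenlicht gives Zariski local closedness of the $G(\bC)$-orbit; its real locus is then locally closed in the Hausdorff topology, and a single $G(\bR)$-orbit is a finite union of connected components of that locus (Whitney finiteness). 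All you use afterward is Hausdorff local closedness, so the conclusion holds, but the justification you wrote conflates the orbit with the Zariski-locally-closed set. Second, and this one is a genuine gap: your separating set must be the Hausdorff closure of an orbit, not its Zariski closure intersected with the real points. For $\bR^\times$ acting on $\bR$ by scaling, the rays $(0,\infty)$ and $(-\infty,0)$ are distinct $G(\bR)$-orbits of equal dimension with the \emph{same} Zariski closure (all of $\bR$), so the Zariski closure of one contains the other and does not separate them. Their Hausdorff closures $[0,\infty)$ and $(-\infty,0]$ do separate them, and your open-in-its-closure observation, applied to the Hausdorff closure, already yields the contradiction --- the ``forcing equal dimension'' step is both unnecessary and, over $\bR$, misleading, since equal-dimensional orbits with equal Zariski closure need not coincide.
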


\begin{proof}[First proof of Theorem \ref{thm:limit_absolute}] \ann{\textbf{Referee:} Perhaps it is useful to remind the reader $G_i$'s have bounded degree, e.g. $G_i$'s are semisimple -- this seems to be used to bound the dimension of representations in Chevalley's theorem.\\\textbf{Authors:} We don't have to remark on that, because there are only finitely many possibilities for the isomorphism type of $G_i^\zarcon{}$, and each $\Hom(G_i^\zarcon{}, G^\zarcon{})$ is a variety.}
  Part (i) is immediate from \cite[Thm. 1.5]{sfilip_ssimple} -- the measurable and continuous (in fact, real-analytic) algebraic hulls of $\cM$ coincide.
  Indeed, the cited result implies that any measurable invariant tensor is necessarily continuous, and is thus well defined on $ \cM_i\subset \cM $.
  
  Part (ii) is proved by contradiction.
  First, by passing to a finite cover of $\cM$, we can assume that $G$ is Zariski-connected, in particular an irreducible representation of $G$ is strongly irreducible, i.e. it does not contain a proper finite collection of subspaces permuted by $G$.
  Suppose now that there exists a subsequence of the $\cM_i$ such that we have the strict inclusion $G_i^{\zarcon}\subsetneq G^{\zarcon}$, where for a group $H$ the subgroup $ H^{\zarcon} $ was defined in Rmk.~\ref{R:compactfactors}(2) as the smallest normal subgroup such that $H/H^\zarcon{}$ is compact.

  Chevalley's Theorem associates to the groups $G_i^\zarcon\subset G$ lines $ l_i $ inside $G$-representations $V_i$ such that $G_i^\zarcon$ is the stabilizer of $ l_i $.\ann{\textbf{Authors:} The proof has been restructured and shortened. First, it deals with issues of irreducibility and connected components earlier in the proof, and where possible at the level of representations instead of bundles.
  Second, it introduces the groups $G^\zarcon{}$ (see \ref{R:compactfactors}) which are in fact semisimple, not just reductive, so that the quoted theorems apply.}
  There are only finitely many isomorphism types of groups $ G_i^\zarcon $, since they are semisimple and connected (see Rmk.~\ref{R:compactfactors}(2)).
  By \cite[Thm. XXIV 7.3.1(i)]{SGA3} the homomorphisms $ \Hom(G_i^\zarcon, G^\zarcon) $ form a variety of finite type, so in particular $ \Hom(G_i^\zarcon, G^\zarcon) $ has finitely many components.
  The proof of Chevalley's Theorem shows that all subgroups parameterized by a given component can be obtained as stabilizers of lines in a fixed representation $V$.
  So passing to a further subsequence (still denoted $G_i^\zarcon$), we may assume that all the $ l_i $ occur inside one single representation $ V $.

  To get a contradiction to $G_i^\zarcon{}\subsetneq G^\zarcon{}$ for all sufficiently large $i$, it suffices to prove the following.
  Let $V=V'\oplus V''$ where $V'$ is $G$-irreducible and $V''$ is $G$-invariant.
  If $l_i$ is contained in $V''$ for all sufficiently large $i$, then we apply the reasoning to $V''$.
  Therefore, suppose that $l_i$ projects non-trivially to $V'$ along a subsequence.
  We will show $G$ acts on $V'$ via a compact group. 
  If $G$ acts on each irreducible $V'\subset V$ via a compact group, then $G$ acts on $V$ via a compact group, and so we will be able to conclude that $G_i^\zarcon{} = G^\zarcon{}$. (We must also have that $G_i$ acts on $V$ via a compact group, since $G_i\subset G$.)

  Assume that $l_i$ projects non-trivially to $V'$ for a subsequence, which we take again to be $l_i$.
  From now on, identify all the groups with their images in $\GL(V')$ and assume that $l_i\subset V'$.
  Then the orbit $G_i\cdot l_i \subset \bP(V')$ is identified with $G_i/G_i^\zarcon{}$ and carries a natural $G_i$-invariant probability measure $\eta_{i,mod}$, since the quotient $G_i/G_i^\zarcon{}$ is compact.

  Associate to the $G$-representation $V'$ the vector bundle $E'\to \cM$; since $V$ arises as a subrepresentation in tensor construction on the natural representation of $G$ on $H^1$, $E'$ is itself a subbundle of such a natural tensor construction.
  Over each $\cM_i$ define the measure $\eta_i$ on $\bP(E')$ which is the product of Lebesgue measure on $\cM_i$ with the model measure $\eta_{i,mod}$ in the fiber direction.
  By construction, the probability measure $\eta_i$ is invariant under $\SL_2(\bR)$.

  Let now $\eta$ be any weak limit of the $\eta_i$; it will still be invariant under $\SL_2(\bR)$ and now project to Lebesgue measure on $\cM$.
  Denote by $\mathscr{P}(\bP(E'))$ the bundle of probability measures on the fibers of $\bP(E')$.
  Then the measure $\eta$, via its disintegration, gives a section $s:\cM \to \mathscr{P}(\bP(E'))$.

By \eqref{eqn:section_descent}, the section $s$ descends to a map
$[s]:\cM\to G\backslash\mathscr{P}(\bP(V'))$ where
$\mathscr{P}(\bP(V'))$ is the space of probability measures on the
projectivization of the $G$-representation $V'$. Because the space
of probability measures divided by the $G$-action is countably
separated (Proposition~\ref{prop:separability}), by
\cite[Proposition 2.1.10]{Zimmer_book} it follows that $s$ takes
values in a single $G$-orbit, Lebesgue-a.e. on $\cM$.
Therefore, for
Lebeague a.e.\ $x \in \cM$, the measure $\eta_x$ given by the
disintegration of $\eta$ along the fiber $E'_x \approx V'$ is given by
\begin{displaymath}
\eta_x = \psi(x)_* \eta_0,
\end{displaymath}
where $\psi(x) \in G$, and $\eta_0$ is some fixed measure on
$V'$. (Here we are choosing some measurable trivialization of the
bundle $E'$). Let $H \subset G$ denote the stabilizer of $\eta_0$.

If $H$ is a proper subgroup of $G$, \ann{\textbf{Referee:} Why can $H$ not be e.g. a proper parabolic subgroup in $G$?\\ \textbf{Authors:} We show that $H$ cannot be any proper subgroup of $G$. The proof works even if $H$ is a parabolic subgroup. In that case one can reach more specific conclusions: $H$ fixes some nontrivial line or subspace. And the measure will be supported there, so $G$ will act by a compact factor there.} we can reduce the algebraic hull over $\cM$ to $H$ as follows.
  First, by \cite[Corollary 3.2.23]{Zimmer_book} stabilizers of measures are algebraic subgroups, so there exists a tensor construction $ T(V') $ on the representation $ V' $, and a line $ l'\subset T(V') $ such that $ H $ is the stabilizer of $ l' $.
Moreover, for a.e. $ x\in \cM $ the linear map $\psi(x)^{-1}$
yields an isomorphism $ E_x' \to V' $ taking $ \eta_x$ to $ \eta_0$; such isomorphisms are parametrized by $ H $ acting by postcomposition on $ V' $.
  A linear isomorphism $ E_x'\to V' $ induces one on tensor constructions $ T(E_x')\to T(V') $ and so we can pull back the line $ l\subset T(V') $ to $ l_x\subset T(E_x') $.
  The linear isomorphism was well-defined up to postcomposition with $ H $, but $ l $ is $ H $-invariant so $ l_x $ is well-defined.
  The collection of lines $ l_x $ gives a further reduction of the algebraic hull over $ \cM $, which is not possible.
  Thus $G=H$, so $G$ leaves invariant a nontrivial measure $[\eta]$ on $\bP(V')$.


  According to \cite[Corollary 3.2.2]{Zimmer_book} either the stabilizer of the measure $ [\eta] $ is compact, or there is a proper subspace of positive $ [\eta] $-mass which is left invariant by a finite index subgroup of $ G $. 
  But we assumed at the start that $G$ is connected (by passing to a finite cover) and $V'$ is $G$-irreducible, so it must be the case that $G$ acts on $V'$ via a compact group.
\end{proof}

\subsection{Second proof of Theorem \ref{thm:limit_absolute}}\label{SS:HT} We now give the second approach  to Theorem \ref{thm:limit_absolute}, which is related to \cite{MW}. In this section we assume all bundles are complexified.

Any tensor construction on $ H^1 $, denoted $ \mathbf{H} $, will admit a Hodge decomposition $ \mathbf{H} = \oplus \mathbf{H}^{p,q} $.
For establishing properties of the algebraic hull using \HT tensors, the following concepts will be useful.
Throughout, $ m\in \cM $ is some point, and $ \mathbf{H}_m $ denotes the fiber of $ \mathbf{H} $ at $ m $.

\begin{definition}[\HT tensor]
	\label{def:HTtensor}
	A pure Hodge-Teichm\"uller tensor at $m$ is an element of $\mathbf{H}_m$ of pure Hodge type $(p,q)$ for some $p,q$ such that the parallel transport along any path in the $\GL_2^+(\bR)$ orbit remains pure Hodge type $(p,q)$. A Hodge-Teichm\"uller tensor at $m$ is a linear combination of pure  Hodge-Teichm\"uller tensors. 
\end{definition}

\begin{prop}
	\label{prop:HT_properties}
	Let $ \mathbf{H} $ denote some fixed tensor construction on $ H^1 $.
	\leavevmode
	\begin{itemize}
		\item [(i)] The fiber of any $ \GL_2^+(\bR) $equivariant line subbundle of $\mathbf{H}$ is the span of a Hodge-Teichm\"uller tensor. 
		\item [(ii)] For any affine invariant submanifold $\cM$ there is a finite union $\cB$ of proper affine invariant submanifold of $\cM$ such that the Hodge-Teichm\"uller tensors in $\mathbf{H}$ form a continuous equivariant subbundle of $\mathbf{H}$ over $\cM\setminus \cB$. 
		\item [(iii)] The algebraic hull acts via isometries on any fiber of the bundle of Hodge-Teichm\"uller tensors.
	\end{itemize}
\end{prop}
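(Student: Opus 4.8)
The plan is to prove the three parts in the order (iii), (i), (ii). Part (iii) is a short computation with the Hodge metric. Let $v$ be a pure Hodge--Teichm\"uller tensor of type $(p,q)$ at $m$ and let $v(t)$ be its flat transport along the $\GL_2^+(\bR)$-orbit, which by definition stays of pure type $(p,q)$. Write the Gauss--Manin connection on $\mathbf{H}$ as $\nabla = D + \sigma + \bar\sigma$, where $D$ preserves the Hodge decomposition and is unitary for the Hodge metric $h$ on each graded piece, and $\sigma\colon \mathbf{H}^{p,q}\to\mathbf{H}^{p-1,q+1}$ and $\bar\sigma\colon\mathbf{H}^{p,q}\to\mathbf{H}^{p+1,q-1}$ are tensorial (Griffiths transversality). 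Since $v(t)$ is $\nabla$-flat and of pure type $(p,q)$, projecting $0 = \nabla_{\dot g} v(t)$ onto the three distinct Hodge summands (for $\dot g$ tangent to the orbit) gives $D_{\dot g} v(t) = \sigma_{\dot g} v(t) = \bar\sigma_{\dot g} v(t) = 0$, so $\frac{d}{dt}\|v(t)\|_h^2 = 2\Re\, h(D_{\dot g}v(t), v(t)) = 0$. Thus the Kontsevich--Zorich cocycle preserves the Hodge metric on the (equivariant) subbundle of Hodge--Teichm\"uller tensors; since this cocycle and its tensor constructions are reductive by \cite{AEM}, the algebraic hull preserves this invariant metric section, i.e.\ acts by isometries on each of its fibers.

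For part (i): a $\GL_2^+(\bR)$-equivariant line subbundle $L\subset\mathbf{H}$ is generated near a basepoint by a section $\phi$ which is equivariant (up to a scalar cocycle, immaterial for what follows), hence flat along each $\GL_2^+(\bR)$-orbit, and Theorem~\ref{thm:phi_polynomial} puts it in the form $\phi = \sum_l s_l\, P_l(\mathbf{x},\mathbf{y})/A(\mathbf{x},\mathbf{y})^{k_l}$ with the $s_l$ flat and the $P_l$ homogeneous of bidegree $(k_l,k_l)$ and $\SL_2(\bR)$-invariant. I would then show that each Hodge-graded component $\phi^{p,q}(m)$ is a pure Hodge--Teichm\"uller tensor. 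Decomposing $\mathbf{H}$ into flat irreducible sub-bundles via \cite{AEM}, one reduces to two cases. On a flat sub-bundle built from the non-tautological parts of $H^1$, an equivariant section is flat (the only source of non-flatness, the factor $P_l/A^{k_l}$, is built from the tautological plane), hence spans a rank-one sub-VHS, whose fiber is automatically of pure Hodge type. On a flat sub-bundle involving the tautological plane, the point is that the tautological plane carries, along each orbit, exactly the standard weight-one variation of Hodge structure --- in the flat trivialization its $(1,0)$-line at $g\cdot m$ is $\bigl(g_{11}p(\mathbf{x})+g_{12}p(\mathbf{y})\bigr)+i\bigl(g_{21}p(\mathbf{x})+g_{22}p(\mathbf{y})\bigr)$ --- so passing to the variables $z = \mathbf{x}+i\mathbf{y}$, $\bar z = \mathbf{x}-i\mathbf{y}$ and using the $\SL_2(\bR)$-invariance and bidegree of the $P_l$ lets one compute directly that the Hodge components of $\phi(m)$ remain of fixed type along the orbit. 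This is modeled on the analysis in \cite{sfilip_ssimple}.

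For part (ii): the span of the Hodge--Teichm\"uller tensors inside $\mathbf{H}_m$ transforms equivariantly, so its dimension is $\GL_2^+(\bR)$-invariant in $m$, hence constant on a conull set by ergodicity; let $d$ be this generic value. The locus where the dimension exceeds $d$ is cut out by the vanishing of suitable minors of the operators $\sigma_{\dot g}$ of part (iii) (together with their derivatives along orbits) restricted to the pure Hodge subspaces --- this is the promised ``analog of the locus where the second fundamental form of the Hodge bundle fails to have full rank.'' These are real-analytic, $\GL_2^+(\bR)$-equivariant data, so this locus is a proper, closed, $\GL_2^+(\bR)$-invariant, real-analytic subset of $\cM$; by the algebraicity of affine invariant submanifolds and the structure theory of \cite{EM, EMM, sfilip_algebraicity}, such a locus is a finite union $\cB$ of proper affine invariant submanifolds. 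Off $\cB$ these equations have locally constant rank, so the Hodge--Teichm\"uller tensors form there a real-analytic, hence continuous, subbundle, which is $\GL_2^+(\bR)$-equivariant by construction.

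The step I expect to be the main obstacle is the finiteness assertion in part (ii): upgrading ``proper closed invariant real-analytic subset'' to ``finite union of affine invariant submanifolds,'' and identifying it precisely with the degeneracy locus of the second fundamental form. The Hodge-type bookkeeping in part (i) is the conceptual core, but once the standard weight-one model for the tautological variation along orbits is in hand it should be a routine (if intricate) computation.
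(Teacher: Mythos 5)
Your proposal is in substance correct and addresses all three parts, but it takes a genuinely different route from the paper's at one point. For part (iii), the paper's argument is shorter and avoids the Gauss--Manin calculus: it observes that the flat polarization form $B$ on $\mathbf{H}$ is an equivariant (indeed flat) tensor, hence preserved by the algebraic hull, and that on tensors of pure type $(p,q)$ the Hodge inner product coincides with $B$ up to sign, so the hull preserves the Hodge metric on each pure-type HT piece. Your Griffiths-transversality computation --- $\nabla v = 0$ and $v$ pure forces $Dv = \sigma v = \bar\sigma v = 0$, hence $\|v\|_h$ is constant along orbits, hence $h$ restricted to the HT subbundle is an equivariant tensor that the (reductive) algebraic hull preserves --- proves the same thing by hand and is equally valid; the paper's version is a representation-theoretic shortcut that buys brevity. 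For part (i), the paper simply cites \cite[Thm.\ 1.2]{sfilip_ssimple}, whereas you re-derive the key content from Theorem~\ref{thm:phi_polynomial}; this is extra work that follows essentially the outline of the cited proof. For part (ii) you and the paper do the same thing --- define $\cB$ as the jump locus of the fiberwise dimension of the (closed, $\GL_2^+(\bR)$-invariant) set of HT tensors --- and you are right to flag that the step from ``closed, proper, invariant'' to a \emph{finite union of affine invariant submanifolds} is the delicate one; the paper itself only records that $\cB$ is closed and proper, implicitly leaning on the structure theory of \cite{EM, EMM} and algebraicity (\cite{sfilip_algebraicity}) for the finiteness, exactly as you propose to.
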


\begin{proof}
	Part (i) is a direct consequence of \cite[Thm. 1.2]{sfilip_ssimple}.
	
	For (ii), the set of Hodge-Teichm\"uller tensors by definition is a $\GL_2^+(\bR)$ invariant subset of $\mathbf{H}$. Because the  Hodge decomposition and  $\GL_2^+(\bR)$-action are continuous, it is also a closed subset of $\mathbf{H}$. 
	
	Let $d$ be the minimal dimension of the space of Hodge-Teichm\"uller tensors for any point $m\in \cM$. Define $\cB$ to be the subset of $\cM$ where the dimension of the space of Hodge-Teichm\"uller tensors is strictly greater than $d$.
	Note that $ \cB $ is automatically a closed proper subset of $ \cM $.
	Over $\cM\setminus \cB$, the set of Hodge-Teichm\"uller tensors is vector subbundle of $\mathbf{H}$.
	
	For (iii) note that the bundle of Hodge-Teichm\"uller tensors of pure type $(p,q)$ forms an equivariant subbundle. The full bundle of Hodge-Teichm\"uller tensors is the direct sum of these  subbundles, so it suffices to prove the result for the bundle of pure $(p,q)$ Hodge-Teichm\"uller tensors. 
	
	The symplectic form on $H^1$ gives rise to a  locally constant bilinear form $B$ on $\mathbf{H}$. One can obtain a positive definite inner product from $B$ called the Hodge inner product by scaling $B$ by different signs on the different pieces of the Hodge decomposition. (This is part of the definition of a variation of Hodge structure.) 
	
	The tensor defining the bilinear form $B$ is flat and hence in particular equivariant, so by definition the algebraic hull preserves $B$. Hence the algebraic hull acts via isometries for the Hodge norm on a fiber of the  $(p,q)$ Hodge-Teichm\"uller tensors, since on this space the Hodge inner product and $B$ are proportional. 
\end{proof}

\begin{proof}[Second proof of Theorem~\ref{thm:limit_absolute}]
	The proof of the inclusion in part (i) is the same.
	
	For part (ii), suppose that $ \cM'\subset \cM $ is an affine manifold outside the locus $ \cB $ defined in Proposition~\ref{prop:HT_properties}.
	By definition of the algebraic hull and using Chevalley's Theorem~\ref{thm:chevalley}, the identity component of the algebraic hull of $ \cM' $ is the stabilizer of an equivariant polynomial line subbundle $\ell$ of the bundle of Hodge-Teichm\"uller tensors in some tensor construction $\mathbf{H}$. As in the first proof, only finitely many tensor constructions need to be considered.
	
	By part (iii) of Proposition~\ref{prop:HT_properties}, the algebraic hull of $ \cM $ acts by isometries on the bundle of \HT tensors.
	Therefore the kernel of the action on \HT tensors is cocompact in both connected components of the identity of the algebraic hulls of $ \cM $ and $ \cM' $, showing again that the hulls agree up to finite index and compact factors.
\end{proof}

\subsection{The relative case}
We will need several preliminaries before dealing with the algebraic hull in $ H^1_{rel} $.

\paragraph{Multiplicities}
The first step in controlling the algebraic hull of $ H^1_{rel} $ is to reduce to the case where bundles in $ H^1 $ have no multiplicity. (This is not necessary for subbundles of $H^1_{rel}$ whose image in $H^1$ have no multiplicity, such as $T(\cM)$. In particular, this is not required for our finiteness applications. The reader willing to ignore multiplicities may proceed directly to Theorem \ref{thm:limit_relative}.)

Let $ \cM $ be an affine invariant manifold and suppose that $ E\subset H^1 $ is a bundle with multiplicities, i.e. $ E = E_{irr}\otimes W $ where $ E_{irr} $ is an irreducible bundle and $ W $ is a vector space parametrizing the isotypical components.
Let $ \tilde{E}:=p^{-1}(E) $ denote the associated bundle in relative cohomology.
So we have a short exact sequence
\begin{align}
\label{eqn:ses_mult}
0 \to \ker p \to \tilde{E} \to E_{irr}\otimes W \to 0.
\end{align}
We would like to reduce to the case where there is no multiplicity in the pure weight $ 1 $ Hodge structure on the right.
For this, take a tensor with the dual $ W^\vee $ to obtain
\begin{align*}
0 \to \ker p\otimes W^\vee \to \tilde{E}\otimes W^\vee \xrightarrow{p} E_{irr}\otimes W\otimes W^\vee \to 0.
\end{align*}
Identifying $ W\otimes  W^\vee = \End(W) $ we have the direct sum decomposition $ W\otimes W^\vee = Id\oplus (\textrm{trace }0) $ where $ Id $ denotes multiples of the identity, and $ (\textrm{trace }0) $ denotes the trace $ 0 $ endomorphisms.
The factor $ E_{irr}\otimes Id = E_{irr} $ is present on the right-hand side above, so we take its preimage to obtain \begin{align}
\label{eqn:ses_no_mult}
0 \to \ker p\otimes W^\vee \to \widetilde{E_{irr}} \to E_{irr} \to 0
\end{align}
where $ \widetilde{E_{irr}} = p^{-1}(E_{irr}\otimes Id) $.
The advantage is that now $ E_{irr} $ is irreducible.

\begin{prop}
	\label{prop:mult}
	Suppose that for an affine manifold $ \cM'\subset \cM $ the algebraic hull of $ \widetilde{E_{irr}} $ from \eqref{eqn:ses_no_mult} over $ \cM' $ agrees up to finite index and compact factors with that over $ \cM $.
	Then the same holds for $ \tilde{E} $ from \eqref{eqn:ses_mult}.
\end{prop}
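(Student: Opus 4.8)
The plan is to show that $\tilde E$ and $\widetilde{E_{irr}}$ are obtained from one another by tensor operations together with passage to flat, $\GL_2^+(\bR)$-equivariant subbundles and quotients, and then to invoke functoriality of the algebraic hull: it will follow that $A_{\widetilde{E_{irr}}}(\cN)$ and $A_{\tilde E}(\cN)$ are isomorphic as algebraic groups, via one fixed representation, for every affine invariant submanifold $\cN$, so that the hypothesis for $\widetilde{E_{irr}}$ transports verbatim to $\tilde E$.

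One direction is built into the construction in the text: by \eqref{eqn:ses_no_mult} the bundle $\widetilde{E_{irr}}$ is the preimage under the flat surjection $p\otimes\mathrm{id}\colon\tilde E\otimes W^\vee\to E\otimes W^\vee$ of the flat subbundle $E_{irr}\otimes(\mathbb{C}\cdot\mathrm{Id}_W)$, hence it is a flat equivariant subbundle of the tensor construction $\tilde E\otimes W^\vee$. For the opposite direction I would recover $\tilde E$ from $\widetilde{E_{irr}}$ using the flat equivariant contraction
\[ c\colon\ \widetilde{E_{irr}}\otimes W\ \hookrightarrow\ (\tilde E\otimes W^\vee)\otimes W\ \xrightarrow{\ \mathrm{ev}\ }\ \tilde E,\qquad (v\otimes\xi)\otimes w\ \longmapsto\ \xi(w)\,v, \]
and I would verify fibrewise that $c$ is surjective. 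A vector $v\in\tilde E_m$ with $p(v)=\sum_j e_j\otimes f_j$ decomposes as the sum of a $\ker p$-part --- which equals $c((v'\otimes\xi)\otimes w)$ for $v'\in\ker p$ and $\xi(w)=1$, noting that $v'\otimes\xi\in\widetilde{E_{irr},m}$ --- and of parts lifting each $e_j\otimes f_j$; for such a part, extending $f_j$ to a basis $\{f_k\}$ of $W$ with dual basis $\{f_k^\vee\}$ and choosing lifts $v_k\in\tilde E_m$ of $e_j\otimes f_k$, the element $x:=\sum_k v_k\otimes f_k^\vee$ lies in $\widetilde{E_{irr},m}$ because $(p\otimes\mathrm{id})(x)=e_j\otimes\mathrm{Id}_W$, while $c(x\otimes f_j)=v_j$. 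Thus $\tilde E$ is a flat equivariant quotient of $\widetilde{E_{irr}}\otimes W$, namely by the flat subbundle $\ker c$.

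I would then pass to algebraic hulls. Let $\rho'$ be the algebraic-group morphism $h\mapsto(h\otimes\mathrm{id}_{W^\vee})|_{\widetilde{E_{irr}}}$ induced by the inclusion, and $\rho$ the morphism induced by $c$ (well defined because $\ker c$ is flat and equivariant, hence preserved by any hull). Functoriality of the algebraic hull --- the hull of a bundle obtained from $B$ by tensor operations and a flat equivariant subquotient is the image of $A_B$ under the induced representation --- gives $A_{\widetilde{E_{irr}}}(\cN)\subseteq\rho'(A_{\tilde E}(\cN))$ and $A_{\tilde E}(\cN)\subseteq\rho(A_{\widetilde{E_{irr}}}(\cN))$ for every $\cN$. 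The formula for $c$ shows $\rho\circ\rho'=\mathrm{id}$, so the two inclusions combine to $A_{\tilde E}(\cN)=\rho(A_{\widetilde{E_{irr}}}(\cN))$; and since the monodromy on $\widetilde{E_{irr}}$ is literally $\rho'$ of the monodromy on $\tilde E$, the group $A_{\widetilde{E_{irr}}}(\cN)$ lies in the image of $\rho'$, whence $\rho'\circ\rho=\mathrm{id}$ on it as well. So $\rho$ restricts to an isomorphism of algebraic groups $A_{\widetilde{E_{irr}}}(\cN)\xrightarrow{\ \sim\ }A_{\tilde E}(\cN)$. Taking representatives with $A_{\widetilde{E_{irr}}}(\cM')\subseteq A_{\widetilde{E_{irr}}}(\cM)$ --- valid since any equivariant tensor over $\cM$ restricts to one over $\cM'$ --- the same $\rho$ realizes the isomorphism over $\cM$ and over $\cM'$, so $A_{\tilde E}(\cM')=\rho(A_{\widetilde{E_{irr}}}(\cM'))$ and $A_{\tilde E}(\cM)=\rho(A_{\widetilde{E_{irr}}}(\cM))$. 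Because an isomorphism of algebraic groups intertwines the operation $(-)^{\zarcon}$ of Remark~\ref{R:compactfactors}(2), the hypothesis, which by that remark amounts to $A_{\widetilde{E_{irr}}}(\cM')^{\zarcon}=A_{\widetilde{E_{irr}}}(\cM)^{\zarcon}$, immediately yields $A_{\tilde E}(\cM')^{\zarcon}=A_{\tilde E}(\cM)^{\zarcon}$, i.e.\ agreement up to finite index and compact factors.

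The step I expect to be the main obstacle is recovering $\tilde E$ from $\widetilde{E_{irr}}$: the construction in the text supplies only the inclusion $\widetilde{E_{irr}}\subseteq\tilde E\otimes W^\vee$, whereas the proposition transfers information in the opposite direction, which forces one to establish surjectivity of $c$ and then to carry out the bookkeeping with $\rho$ and $\rho'$ that upgrades ``$\tilde E$ is a functorial construction of $\widetilde{E_{irr}}$'' from a mere surjection to a genuine isomorphism. Everything after that is formal, relying only on functoriality of the hull and on the fact that an isomorphism preserves identity components and compactness of quotients.
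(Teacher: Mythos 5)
Your argument is correct and takes essentially the same approach as the paper: both recover $\widetilde{E}$ from $\widetilde{E_{irr}}$ by tensoring with $W$ and quotienting by a flat equivariant subbundle, and your contraction $c$ is precisely the middle-column surjection of the paper's commutative diagram, with kernel $\ker p \otimes (\textrm{trace }0)$. The only difference is that you spell out, via the mutually inverse representations $\rho$ and $\rho'$, why this operation induces an isomorphism of algebraic hulls and hence transports ``agreement up to finite index and compact factors''---a step the paper compresses into the phrase ``it suffices to reconstruct the sequence by natural operations.''
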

\begin{proof}
	It suffices to reconstruct the sequence \eqref{eqn:ses_mult} from \eqref{eqn:ses_no_mult} by natural operations.
	For this, take a tensor with $ W $ in \eqref{eqn:ses_no_mult} to obtain
	\begin{align*}
	0 \to \ker p\otimes W^\vee \otimes W \to \widetilde{E_{irr}}\otimes W \to E_{irr}\otimes W \to 0.
	\end{align*}
	
 Consider the commutative diagram below, which involves the natural quotient map $ q: \widetilde{E_{irr}}\onto E_{irr} $, the inclusion $ i:\widetilde{E_{irr}}\to \widetilde{E}\otimes W^\vee $, and for a bundle $ X $ the identity homomorphism $ \bold{1}_X $ (to be distinguished from $ id $ viewed as a vector in $ X\otimes X^\vee $).
\begin{center}
\resizebox{\textwidth}{!}{
	$$\xymatrix{
0 \ar[r] & \ker p\otimes W^\vee \otimes W \ar[r] \ar[d] & \widetilde{E_{irr}}\otimes W \ar[r]^{q\otimes \bold{1}_W} \ar[d]^{i\otimes \bold{1}_W} & E_{irr}\otimes W \ar[r]\ar[d]^{\bold{1}_{E_{irr}}\otimes id \otimes \bold{1}_W} & 0\\
0 \ar[r] & \ker p \otimes W^\vee \otimes W \ar[r] \ar[d] &  \widetilde{E}\otimes W^\vee \otimes W  \ar[r]_{p\otimes \bold{1}_{W^\vee\otimes W}} \ar[d] & E_{irr} \otimes W \otimes W^\vee \otimes W \ar[r] \ar[d] & 0\\
0 \ar[r] &\ker p \ar[r] &\widetilde{E} \ar[r]  & E_{irr}\otimes W  \ar[r] & 0\\
}$$ 
}
\end{center}

The map from the second row to the last is simply quotienting out by the trace $ 0 $ part of $ W^\vee \otimes W $. The commutativity of the upper-right corner of the diagram follows from the construction on $ \widetilde{E_{irr}} $ by tensoring with $ W $ the corresponding maps. The composition in the last column from top to bottom is an isomorphism (as can be checked by selecting a basis of $ W $) and the middle column is a surjection with kernel $ \ker p \otimes (\textrm{trace }0) $.

We thus obtain that  \eqref{eqn:ses_mult} can be obtain from \eqref{eqn:ses_no_mult} by first tensoring with $ W $and then quotienting the left and middle terms by $ \ker p \otimes (\textrm{trace }0) $.
\end{proof}

\begin{remark}In the exact sequence \eqref{eqn:ses_no_mult} the term $ \ker p \otimes W^\vee $ is still a trivial vector space, not a bundle.
So in all arguments below, $ \ker p $ can still be treated as a trivial bundle, even if we are in the case with multiplicities.
\end{remark}

To show containment of algebraic hulls in $ H^1_{rel} $ when $ \cM'\subset \cM $, we will need an analyticity result similar to the one for $H^1$.
It is established in the appendix, in Proposition~\ref{prop:an_H1rel}.

\begin{thm}
  \label{thm:limit_relative}
  Suppose $\cM_i$ is a sequence of affine invariant submanifolds of $\cM$ that equidistribute towards  $\cM$.
  
  Let $G_i$ and $G$ be the algebraic hulls of $\cM_i$ and $\cM$ respectively, for the $\GL_2^+(\bR)$-action on the relative cohomology bundle $H^1_{rel}$.
  \begin{itemize}
   \item [(i)] We have $G_i\subseteq G$ for all $i$.
   \item [(ii)] There exists $N\geq 0$ such that for all $i\geq N$, the groups $G_i$ and $G$ agree up to finite index and compact factors.
  \end{itemize}
\end{thm}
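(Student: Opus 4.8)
\textbf{Proof plan for Theorem~\ref{thm:limit_relative}.}
The plan is to reduce the relative case to the absolute case (Theorem~\ref{thm:limit_absolute}) by exploiting the extension structure of $H^1_{rel}$ over $H^1$, together with the exact analysis of the unipotent part of monodromy carried out in Proposition~\ref{prop:Zar_cl_TM} and the decoupling results of Proposition~\ref{prop:monodromy_decoupling}. First I would dispose of multiplicities using Proposition~\ref{prop:mult}, so that all weight-$1$ bundles occurring may be assumed irreducible; this lets $\ker p$ be treated as a trivial bundle throughout. For part (i), the argument is the same rigidity argument as before: by the analyticity result Proposition~\ref{prop:an_H1rel} in the appendix (the analogue for $H^1_{rel}$ of Theorem~\ref{thm:phi_polynomial}), any measurable $\GL_2^+(\bR)$-invariant tensor on a tensor construction of $H^1_{rel}$ over $\cM$ is real-analytic, hence restricts to an invariant tensor over any $\cM_i\subset\cM$; so every reduction of the algebraic hull of $\cM$ is inherited by $\cM_i$, giving $G_i\subseteq G$.

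For part (ii) I would use the short exact sequence
\begin{align*}
0 \to \ker p \to H^1_{rel} \xrightarrow{p} H^1 \to 0
\end{align*}
and the corresponding sequence of groups \eqref{eqn:H1gp_ses}. The algebraic hull $A$ of $H^1_{rel}$ over $\cM$ surjects onto the algebraic hull $\bar A$ of $H^1$ over $\cM$ (since an invariant tensor on a construction of $H^1$ pulls back via $p$), with kernel a unipotent group $N\subset \Hom(H^1,\ker p)$; similarly for $\cM_i$ with groups $A_i$, $\bar A_i$, $N_i$. By Theorem~\ref{thm:limit_absolute}(ii), $\bar A_i$ and $\bar A$ agree up to finite index and compact factors for $i$ large, so it remains to control the unipotent parts $N_i$ versus $N$. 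The key point is that $N$ is determined by which flat subbundles of the relevant tensor constructions fail to split off $\ker p$; concretely, the part of $N$ coming from $T\cM$ and its Galois conjugates is \emph{maximal} by Proposition~\ref{prop:Zar_cl_TM}, i.e. equals $\Hom(p(T\cM)_\iota,\ker p)$, and the decoupling Proposition~\ref{prop:monodromy_decoupling}(iii) shows the unipotent part on $T\cM\oplus\widetilde W$ is the full product. Invariant tensors witnessing a reduction of $N$ over $\cM'$ would, via analyticity, be invariant tensors over $\cM$, forcing the same reduction there; and the maximality statements of Proposition~\ref{prop:Zar_cl_TM} show there is no room for such a reduction on the $p(T\cM)_\iota$ summands. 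The remaining unipotent contributions (from bundles $\widetilde W$ with $W$ neither $p(T\cM)$ nor a Galois conjugate) may genuinely shrink, but only inside a component that is already being handled up to finite index and compact factors by the absolute case applied to $W$ — more precisely, such a summand of $N$ is a $\bar A$-representation with no nonzero invariants being forced, and the argument of the first proof of Theorem~\ref{thm:limit_absolute}, applied to the tensor construction $\Hom(H^1,\ker p)$ on which the unipotent part acts, shows any discrepancy lies in a compact factor or is absorbed after passing to $N^{\zarcon}$.

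The main obstacle I anticipate is the bookkeeping of the unipotent part: unlike the reductive case, the group $N^{\zarcon}$ need not be semisimple (it is unipotent, hence connected but not reductive), so one cannot directly invoke the finiteness of isomorphism types and $\Hom$-varieties as in the first proof of Theorem~\ref{thm:limit_absolute}. The way around this is to note that $N$ is a \emph{linear} subspace of $\Hom(H^1,\ker p)$ stable under $\bar A$ (and $N_i$ under $\bar A_i$), so the comparison reduces to comparing $\bar A$-subrepresentations with $\bar A_i$-subrepresentations of a fixed vector space; once $\bar A_i$ and $\bar A$ agree up to finite index and compact factors, and once the $T\cM$-part is pinned down as maximal by Proposition~\ref{prop:Zar_cl_TM}, the subspaces $N_i$ are forced to stabilize, with the only possible failure being along compact factors of $\bar A$ acting on $\ker p$-valued homomorphisms — which is exactly the allowed ``up to compact factors'' ambiguity. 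Assembling $A$ from $\bar A$ and $N$, and likewise $A_i$, then yields that $A_i$ and $A$ agree up to finite index and compact factors for $i$ sufficiently large.
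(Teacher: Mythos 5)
Part (i) of your plan matches the paper: Proposition~\ref{prop:an_H1rel} (analyticity in relative cohomology) gives that every reduction of the hull over $\cM$ restricts to $\cM_i$. You also correctly identify that after invoking Proposition~\ref{prop:mult} one can reduce to irreducible $E\subset H^1$, and that the real content of part (ii) is controlling the unipotent part $N\subset\Hom(E,\ker p)$.

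However, your treatment of part~(ii) has a genuine gap precisely at the step you flag as the main obstacle. You acknowledge that for an irreducible piece $E$ that is not $p(T\cM)$ or one of its Galois conjugates, the unipotent part $N_i$ could a priori shrink, and you propose to absorb the discrepancy ``in a compact factor or \ldots after passing to $N^{\zarcon}$.'' This cannot work: $N$ and $N_i$ are unipotent, and a unipotent group has no nontrivial compact quotients, so $N^{\zarcon}=N$ and ``up to compact factors'' buys you nothing on the unipotent side. The theorem asserts that $G_i$ and $G$ agree up to finite index and compact factors, and those fudge factors live entirely in the reductive quotient; the unipotent parts must actually become equal. Similarly, Propositions~\ref{prop:Zar_cl_TM} and~\ref{prop:monodromy_decoupling} are about Zariski closures of \emph{monodromy over $\cM$}; they pin down the $T\cM$-part, but give no mechanism for passing information about $\cM_i$ to a limit, which is what part~(ii) requires.

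The ingredient you are missing is Lemma~\ref{lemma:S_alg_hull}: for irreducible $E$, parts~(i)--(ii) show the unipotent part of the hull over $\cM_i$ has the form $S_i=\Hom(E,W_i)$ for a subspace $W_i\subset\ker p$ (because $\ker p$ carries the trivial action and $E$ is irreducible, every invariant subspace of $\Hom(E,\ker p)=E^\vee\otimes\ker p$ is of this form), and part~(iii) characterizes $S_i$ intrinsically as the smallest space of transformations realizing the parallel transport of the Hodge lift $\widetilde{E^{1,0}}\oplus\widetilde{E^{0,1}}$ along $\GL_2^+(\bR)$-orbits in $\cM_i$. This Hodge-theoretic characterization is what makes the limit argument go: take a subsequential limit $W_\infty$ of the $W_i$; the lower-semicontinuity of the defining property shows $S_\infty=\Hom(E,W_\infty)$ has the same realization property over all of $\cM$, hence contains the unipotent part of the hull over $\cM$, while the containment $S_i\subset S$ (from part~(i)) forces equality in the limit and hence $S_i=S$ for all large $i$. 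Without Lemma~\ref{lemma:S_alg_hull}(iii), there is no way to compare the unipotent groups of $\cM_i$ with that of $\cM$, and the ``compact factors'' escape hatch is unavailable.
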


\begin{proof}
  Part (i) follows from Proposition \ref{prop:an_H1rel} since any reduction of the algebraic hull of $\cM$ is necessarily real-analytic, so it descends to $\cM_i\subset \cM$.
  
  To prove part (ii), note that by Theorem \ref{thm:limit_absolute},  in absolute cohomology  the algebraic hull stabilizes (up to compact factors and finite index) on $\cM_i$ for $i\gg 0$.
  
  Assume therefore that $E\subset H^1$ is an irreducible piece for the action of the reductive part of the algebraic hull; by Proposition~\ref{prop:mult} we can assume irreducibility of $ E $.
  It suffices to show that in the short exact sequence of bundles
  \begin{align}
    0 \to \ker(p) \to p^{-1}(E) \to E \to 0
  \end{align}
  the unipotent part of the algebraic hull stabilizes.
  By Lemma~\ref{lemma:S_alg_hull}(i)\&(ii) the unipotent part is a subbundle $S_i\subset \Hom(E,\ker p)$, which is invariant under the algebraic hull of $ E $.
  
    Since $ E $ is irreducible under the action of the algebraic hull,  $ S_i = \Hom(E,W_i) $ for some subspace $ W_i\subset \ker p $ .
Passing to a subsequence we obtain an accumulation point $W_\infty$ of $ W_i $. Define   $ S_\infty =\Hom(E,W_\infty)  \subset \Hom(E,\ker p) $.
    
  Note that the bundle of holomorphic $ 1 $-forms $ H^{1,0} \subset H^{1} $ lifts to a subbundle denoted $ \widetilde{H^{1,0}}\subset H^{1}_{rel} $, where lifting means that $ p: \widetilde{H^{1,0}} \to H^{1,0} $ is an isomorphism.

Lemma~\ref{lemma:S_alg_hull}(iii) shows that the $ S_i $ have the following description.
  Moving the bundle of $ 1 $-forms $ \widetilde{E^{1,0}}\oplus \widetilde{E^{0,1}} \subset H^{1}_{rel} $ by parallel transport along $ \GL_2^+(\bR) $-orbits in $ \cM_i $, it can be taken to its value at the new point by transformations in $ S_i $.
  
  It follows that $ S_\infty $ has the same property in $ \cM $, thus $ S_\infty $ contains the unipotent part of the algebraic hull of $ \cM $; they must agree since the $ S_i $ are contained in the algebraic hull of $ \cM $.
\end{proof}
\begin{remark}
	The above proof gives a $ \GL_2^+(\bR) $-invariant closed locus $ \cB_{rel}\subset \cM $ such that as soon as an affine submanifold $ \cM'\subset \cM $ is disjoint from $ \cB_{rel} $, the unipotent parts of the algebraic hulls agree.
	
	Indeed, above a fixed point $ x\in \cM $ we have a closed subset of the Grassmanian consisting of subspaces $ S_x\subset \Hom(E_x,\ker p) $ for which the defining property in Lemma~\ref{lemma:S_alg_hull}(iii) holds in a neighborhood of $ x $ inside its $ \GL_2^+(\bR) $-orbit.
	The subset of the total Grassmanian bundle is closed and $ \GL_2^+(\bR) $-invariant, and $ \cM $ is stratified by the minimal possible dimension of an $ S $.
	Away from a proper closed subset, this subspace is unique over the entire $ \cM $.
\end{remark}

\section{Finiteness and abundance results}\label{S:fin}

In this section we prove Theorems \ref{T:Fin} and \ref{T:H2}.
Recall that $ A_V(\cM) $ and $ G_V(\cM) $ denote the algebraic hull, and the Zariski closure of monodromy, on a flat bundle $ V $ in relative or absolute cohomology over $ \cM $.

\subsection{Finiteness.}
If $\cM'\subset \cM$, the restriction any flat bundle on $\cM$ gives a flat bundle on $\cM'$. 

\begin{lem}\label{L:finite}
	Suppose $\cM'\subsetneq \cM$ both have the same algebraic hull for $T\cM$ and its Galois conjugates. Then either
	\begin{itemize}
		\item $\cM'$ is rank 1 degree 1, or
		\item  $\cM'$ is rank 1 degree 2 and $\cM$ is rank 2 degree 1 and $\ker(p)\cap T\cM= \ker(p)\cap T\cM'$. 
	\end{itemize}
\end{lem}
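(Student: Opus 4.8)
The plan is to run everything off the explicit descriptions of the hulls in Theorem~\ref{T:MoreAlgHull}, using the principle that $A_{T\cM}(\cM')=A_{T\cM}(\cM)$ (and the analogous equalities $A_{(T\cM)_\iota}(\cM')=A_{(T\cM)_\iota}(\cM)$) must stabilize every flat subbundle of $T\cM$, resp.\ $(T\cM)_\iota$, coming from the smaller manifold. Fix a basepoint $m_0\in\cM'\subseteq\cM$. If $E\subseteq T\cM$ is a flat subbundle over $\cM'$, then $E$ is $\GL_2^+(\bR)$-invariant and $\wedge^{\dim_\bC E}E\subset\wedge^{\dim_\bC E}(T\cM)$ is a flat line subbundle of a tensor construction; the stabilizer of its fiber $\wedge^{\dim_\bC E}(E_{m_0})$ equals the stabilizer of $E_{m_0}$, so the algebraic hull of $T\cM$ over $\cM'$ stabilizes $E_{m_0}$. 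The subbundles I feed in are $T\cM'$ and its Galois conjugates $(T\cM')_{\iota'}$: recalling from \cite{AW_field} the containment $\bk(\cM)\subseteq\bk(\cM')$ of fields of affine definition and tracking how the defining linear equations transform, $(T\cM')_{\iota'}$ is a flat subbundle of $(T\cM)_{\iota}$ over $\cM'$ with $\iota=\iota'|_{\bk(\cM)}$. Now for the rank of $\cM'$: by Theorem~\ref{T:MoreAlgHull}(1), $A_{T\cM}(\cM)$ acts on $p(T\cM)=p(T)\oplus p(T)^\perp$ as $\Sp(p(T))\times\Sp(p(T)^\perp)$, with unipotent kernel exactly $\Hom(p(T)^\perp,\ker(p)\cap T\cM)$ (those $\xi\in\Hom(p(T\cM),\ker(p)\cap T\cM)$ vanishing on $p(T)$, since the tautological plane is respected). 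Since $(T\cM')_{m_0}$ is stabilized, $p(T\cM')$ is a $\Sp(p(T))\times\Sp(p(T)^\perp)$-invariant subspace containing $p(T)$, hence equals $p(T)$ or $p(T\cM)$; if it equals $p(T\cM)$ and $\rank(\cM)\ge 2$, applying the unipotent kernel to a vector of $(T\cM')_{m_0}$ with nonzero $p(T)^\perp$-component forces $\ker(p)\cap T\cM\subseteq\ker(p)\cap T\cM'$, hence $\dim T\cM'=\dim T\cM$ and $\cM'=\cM$, a contradiction; if $\rank(\cM)=1$ then $p(T\cM)=p(T)$ anyway. So $p(T\cM')=p(T)$, i.e.\ $\rank(\cM')=1$.

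Next I would show $\deg(\cM)=1$. Suppose $\deg(\cM)\ge 2$ and pick a nontrivial real embedding $\iota$ of $\bk(\cM)$, extended to $\iota'$ on $\bk(\cM')$. Then $A_{(T\cM)_\iota}(\cM)=A_{(T\cM)_\iota}(\cM')$ stabilizes $(T\cM')_{\iota'}$, so it stabilizes the $2$-dimensional subspace $p((T\cM')_{\iota'})$ of $p((T\cM)_\iota)$; but by Theorem~\ref{T:MoreAlgHull}(2) it acts on $p((T\cM)_\iota)$ (dimension $2\rank(\cM)$) as the full symplectic group with unipotent kernel all of $\Hom(p((T\cM)_\iota),(\ker p\cap T\cM)_\iota)$. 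When $\rank(\cM)\ge 2$ the space has dimension $\ge 4$ and the irreducible action of $\Sp(2\rank(\cM))$ cannot fix a proper $2$-plane; when $\rank(\cM)=1$ the same unipotent argument as before forces $(\ker p\cap T\cM)_\iota\subseteq(\ker p\cap T\cM')_{\iota'}$, hence $\cM'=\cM$. Either way we get a contradiction, so $\deg(\cM)=1$ and $\bk(\cM)=\bQ$.

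Now every real embedding $\iota'$ of $\bk(\cM')$ fixes $\bQ$, so $(T\cM')_{\iota'}\subseteq T\cM$ over $\cM'$. For $\iota'\ne\mathrm{id}$, $p((T\cM')_{\iota'})$ is a symplectic $2$-plane symplectically orthogonal to $p(T\cM')=p(T)$, hence contained in $p(T)^\perp$, and it is stabilized by $A_{T\cM}(\cM)=A_{T\cM}(\cM')$, which acts on $p(T)^\perp$ as the full group $\Sp(p(T)^\perp)$. If $\rank(\cM)\ge 3$ or $\rank(\cM)=1$ no such $2$-plane exists, so $\deg(\cM')=1$ and $\cM'$ is rank $1$ degree $1$ (the first alternative). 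If $\rank(\cM)=2$ then $\dim p(T)^\perp=2$, so each nontrivial conjugate has image exactly $p(T)^\perp$; two distinct ones would be mutually symplectically orthogonal $2$-planes both equal to $p(T)^\perp$, forcing the symplectic space $p(T)^\perp$ to be isotropic, which is absurd, so there is at most one nontrivial conjugate, i.e.\ $\deg(\cM')\le 2$. When $\deg(\cM')=2$, apply the unipotent kernel $\Hom(p(T)^\perp,\ker p\cap T\cM)$ of $A_{T\cM}(\cM)$ to a vector of $(T\cM')_{\iota'}$ with nonzero $p(T)^\perp$-component: this forces $\ker p\cap T\cM\subseteq(\ker p\cap T\cM')_{\iota'}$, and since $\ker p\cap T\cM$ is $\bQ$-rational hence $\iota'$-invariant we also have $(\ker p\cap T\cM')_{\iota'}\subseteq\ker p\cap T\cM$, so $\ker p\cap T\cM=\ker p\cap T\cM'$. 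Together with $\rank(\cM)=2$, $\deg(\cM)=1$ and $\rank(\cM')=1$, $\deg(\cM')=2$, this is exactly the second alternative.

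The step I expect to be the main obstacle is the bookkeeping with Galois conjugates: verifying that $(T\cM')_{\iota'}$ really sits inside $(T\cM)_{\iota'|_{\bk(\cM)}}$ as a flat subbundle over $\cM'$ (which rests on $\bk(\cM)\subseteq\bk(\cM')$ and on the transformation of the defining equations under the embeddings), and correctly identifying which subspaces of $\ker(p)$ are fixed by the conjugation. The one genuinely structural input is that the equality of hulls transfers invariance of all these subbundles --- detected through top exterior powers --- so that the rigidity of Theorem~\ref{T:MoreAlgHull} applies; once that is in place the remaining arguments are elementary symplectic linear algebra and dimension counting.
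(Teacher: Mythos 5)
Your proof is correct and rests on the same ingredients as the paper's (Theorem~\ref{T:MoreAlgHull} constraining the invariant subspaces of $p(T\cM)$, $T\cM$, and their Galois conjugates, together with the fact that flat subbundles over $\cM'$ must be stabilized by the common hull), but you organize the case analysis differently. The paper first reduces to $\rank(\cM')=1$ via its observations (O1)--(O2), and then cases directly on $\rank(\cM)$, deriving $\deg(\cM)=1$ only inside the $\rank(\cM)=2$, $\deg(\cM')=2$ subcase from the Galois orbit of $p(T\cM')$ spanning $p(T\cM)$. You instead prove $\deg(\cM)=1$ as a standalone intermediate step, by pushing a nontrivial conjugate $(T\cM')_{\iota'}$ (a $2$-plane after projecting by $p$) into $(T\cM)_\iota$ and invoking irreducibility of the full symplectic group on $p((T\cM)_\iota)$ when $\rank(\cM)\ge 2$, together with the unipotent-kernel argument when $\rank(\cM)=1$; this makes the remaining $\rank(\cM)$ case split cleaner since all conjugates of $T\cM'$ then live in $T\cM$. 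You also use the unipotent kernel $\Hom(p(T)^\perp,\ker p\cap T\cM)$ explicitly where the paper abstracts this into observations (O2), (O4); these are equivalent, but your version is more concrete. The one point to be careful about --- and which you flag yourself --- is the containment $\bk(\cM)\subseteq\bk(\cM')$ and the accompanying bookkeeping placing $(T\cM')_{\iota'}$ inside $(T\cM)_{\iota'|_{\bk(\cM)}}$; the paper handles this with the looser assertion that a Galois conjugate of $p(T\cM')$ sits inside \emph{some} Galois conjugate of $p(T\cM)$, which it calls a triviality. Your stronger version is what lets you extract $\deg(\cM)=1$ cleanly, and it is correct, but that step is the one that would need the most care if you were to justify everything from scratch.
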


\begin{proof}
	Theorem \ref{T:MoreAlgHull} implies that 
	\begin{enumerate}
		\item[(O1)] the only proper subspaces of $p(T\cM)$ invariant under $A_{p(T\cM)}(\cM)$ are the tautological plane and its complement, 
		\item[(O2)] the only proper subspace of $T\cM$ invariant under $A_{T\cM}(\cM)$ and not contained in  $p^{-1}$ of the tautological plane is $p^{-1}$ of the complement of the tautological plane in  $p(T\cM)$,
		\item[(O3)] there are no proper subspaces of a nontrivial Galois conjugate $p(T\cM)_\iota$ invariant under $A_{p(T\cM)_\iota}(\cM)$,
		\item[(O4)] there is no proper subspace of $(T\cM)_\iota$ that is not contained in $\ker(p)$ and is invariant under $A_{(T\cM)_\iota}(\cM)$.
	\end{enumerate}  
	
	Since the algebraic hulls of $\cM$ and $\cM'$ are the same, the algebraic hull of $\cM$ must preserve flat subbundles over $\cM'$. (Technically in such statements we should refer to fibers of bundles at points of $\cM'$, but throughout this proof we omit this specification.) In particular, $p(T\cM')$ must be stabilized by $A_{p(T\cM)}(\cM)$. By (O1), $p(T\cM')$ is either the tautological plane or $p(T\cM)$. 
	
	Suppose $p(T\cM')$ is not the tautological plane. So  $p(T\cM')=p(T\cM)$, and $\cM'$ has rank greater than 1. Since $T\cM'$ must be stabilized by $A_{T\cM}(\cM)$, (O2) implies $T\cM'= T\cM$. This contradicts $\cM'\neq \cM$. 
	
	Hence  $p(T\cM')$ is the tautological plane. So $\cM'$ is rank 1.  Note that any Galois conjugate of $p(T\cM')$ must be contained in some Galois conjugate of $p(T\cM)$. (This is a triviality about subspaces of vector spaces.) Any Galois conjugate of $p(T\cM')$ has dimension 2 and is stabilized by the algebraic hull of a Galois conjugate of $p(T\cM)$. 
	
	If $\cM$ has rank at least 3, (O1) and (O3) imply that $p(T\cM)$ and its Galois conjugates contain only one subspace invariant under algebraic hull of dimension 2, namely the tautological plane. Hence $\cM'$ has degree 1. 
	
	If $\cM$ has rank 2, (O1) and (O3) imply that $p(T\cM)$ and its Galois conjugates contain only two subspaces invariant under algebraic hull of dimension 2, namely the tautological plane and its complement in $p(T\cM)$. Hence $\cM'$ has degree 1 or 2. If $\cM'$ has degree 2, then the Galois conjugate of $p(T\cM')$ must be the complement of the tautological plane in $p(T\cM)$. Hence $p(T\cM)$ is the sum of $p(T\cM')$ and its Galois conjugate, and so $\cM$ has degree 1. The Galois conjugate of $T\cM'$ must be stabilized by  $A_{T(\cM)}(\cM)$, so (O2) implies that it is $p^{-1}$ of the complement of the tautological plane. In particular, $\ker(p)\cap T\cM =  \ker(p)\cap T\cM$.
	
	If $\cM$ has rank 1 and is not degree 1, (O4) implies that a Galois conjugate of $T(\cM')$ must be equal to a Galois conjugate of $T(\cM)$. This implies $\cM=\cM'$, a contradiction.  
\end{proof}

\begin{proof}[Proof of Theorem \ref{T:Fin}.]
	Let $\cM_1, \cM_2, \ldots $ be an infinite sequence of affine invariant submanifolds in some stratum that have fixed rank, degree, and dimension. The closure of their union is a finite union of affine invariant submanifolds, so passing to a subsequence we may assume that the $\cM_i$ are contained in and equidistribute to a single affine invariant submanifold $\cM$. By Theorem \ref{T:equidistribution}, removing finitely many of the $\cM_i$ if necessary, we may assume that the algebraic hull of $T(\cM)$ and its Galois conjugates on $\cM$ are equal to the algebraic hull of their restrictions to $\cM_i$. 
	
	Lemma \ref{L:finite} now gives the result.
\end{proof}

\subsection{Abundance.}
Let $\cO$ be an order in a real quadratic field, for example $\cO=\bZ[\sqrt{D}]$, where $D$ is a positive integer. Let $\cM$ be an affine invariant submanifold of rank 2 and degree 1. Say that $(X, \omega)$ is an eigenform for real multiplication by $\cO$ if there is an action of $\cO$ on $p(T\cM)_{(X,\omega)}$ by linear transformations that are self-adjoint with respect to the symplectic form, preserve the integer lattice, and act on $p(\omega)$ via scalars.  

\begin{lem}\label{L:RM}
	The locus of eigenforms in $\cM$ for  real multiplication by $\cO$ is a finite (possibly empty) union of codimension 2 degree 2 rank 1 affine invariant submanifolds. 
\end{lem}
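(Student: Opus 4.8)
Fix a generator $\mu$ of $\cO$ with $\mu^{2}=e\mu+f$ for integers $e,f$, let $\mu'$ denote its Galois conjugate (so $\mu\neq\mu'$ are real, since $\cO$ is an order in a real quadratic field), and let $\cE\subseteq\cM$ be the eigenform locus for $\cO$. The plan is to describe $\cE$ locally in period coordinates by linear equations with coefficients in $\bQ(\mu)$, conclude that it is a finite union of affine invariant submanifolds, and then read off the rank, codimension, and degree of each piece. The basic linear-algebraic observation is the following. Suppose $(X,\omega)\in\cE$, witnessed by a self-adjoint $\theta\in\End\!\bigl(p(T\cM)_{(X,\omega)}\bigr)$ preserving the lattice $\Lambda:=p(T\cM)_{(X,\omega)}\cap H^{1}(X;\bZ)$, with $\theta^{2}=e\theta+f\cdot\Id$, acting on $p(\Re\omega)$ and $p(\Im\omega)$ by the scalar $\mu$ (relabel $\mu\leftrightarrow\mu'$ if necessary). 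Since $x^{2}-ex-f=(x-\mu)(x-\mu')$ has distinct roots, $\theta$ is diagonalizable with $p(T\cM)_{(X,\omega)}=W_{\mu}\oplus W_{\mu'}$; self-adjointness gives $\mu\langle u,v\rangle=\langle\theta u,v\rangle=\langle u,\theta v\rangle=\mu'\langle u,v\rangle$ for $u\in W_{\mu}$, $v\in W_{\mu'}$, so $W_{\mu}$ and $W_{\mu'}$ are symplectically orthogonal, hence each symplectically nondegenerate and even-dimensional. If $W_{\mu}$ were all of $p(T\cM)_{(X,\omega)}$ then $\theta=\mu\cdot\Id$ would preserve the full lattice $\Lambda$, forcing $\mu\in\bZ$; similarly $W_{\mu'}\neq0$. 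Hence $\dim W_{\mu}=\dim W_{\mu'}=2$, and since $p(\Re\omega),p(\Im\omega)\in W_{\mu}$ the tautological plane $p(T)_{(X,\omega)}$ lies in the two-dimensional $W_{\mu}$ and therefore equals it; thus $W_{\mu'}=p(T)_{(X,\omega)}^{\perp}$ and $\theta=\mu\operatorname{pr}_{p(T)}+\mu'\operatorname{pr}_{p(T)^{\perp}}$ is determined by the point.

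To globalize, I would work in a simply connected period chart $U$ around a point of $\cE$ and trivialize $H^{1}_{rel}$ flatly, so $\Lambda$ and the symplectic form are constant on $U$. For an eigenform in $U$ the endomorphism $\theta$ is integral (it preserves $\Lambda$), and its relevant eigenspace is the tautological plane, which varies continuously; since the integral endomorphisms form a discrete set, $\theta$ is locally constant, so it assumes only finitely many values $\theta_{0},\dots$ on $\cE\cap U$, and the corresponding pieces are the linear loci $\{(X,\omega)\in\cM\cap U:p(\omega)\in\ker(\theta_{0}-\mu\,\Id)\otimes\bC\}$. The same discreteness shows $\cE$ is closed in $\cM$: along a convergent sequence of eigenforms the associated $\theta$'s converge in the discrete set of integral endomorphisms, hence stabilize, and the limit is again an eigenform. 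Being closed, $\GL_{2}^{+}(\bR)$-invariant, and locally a finite union of linear subspaces of period charts, $\cE$ is a finite union $\cE=\cE_{1}\cup\dots\cup\cE_{r}$ of affine invariant submanifolds (this local linearity is McMullen's mechanism in \cite{Mc}; one can also invoke the algebraicity theorem of \cite{sfilip_algebraicity}), and it is proper in $\cM$ since $W_{\mu}\subsetneq p(T\cM)$.

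Finally, I would identify the invariants of a component $\cE_{j}$. The tautological bundle is tangent to any $\GL_{2}^{+}(\bR)$-invariant submanifold, so $p(T)\subseteq p(T\cE_{j})$, while the defining equation forces $p(T\cE_{j})\subseteq W_{\mu}=p(T)$; hence $p(T\cE_{j})=p(T)$ is two-dimensional and $\cE_{j}$ has rank $1$. As the defining equation constrains only the absolute periods, $T\cE_{j}=T\cM\cap p^{-1}(p(T))\supseteq\ker(p)\cap T\cM$, so $\ker(p)\cap T\cE_{j}=\ker(p)\cap T\cM$; comparing the exact sequences \eqref{eqn:TM_ses} for $\cM$ and $\cE_{j}$ and using that $\cM$ has rank $2$ (so $\dim p(T\cM)=4$), we get $\dim\cM-\dim\cE_{j}=\dim p(T\cM)-\dim p(T\cE_{j})=4-2=2$, i.e.\ codimension $2$. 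For the degree: by \cite{AW_field} and \cite{AEM}, $p(T\cE_{j})$ together with its $\deg\cE_{j}$ Galois conjugates is a direct sum of simple subbundles, which all lie inside $p(T\cM)$ because the latter is defined over $\bQ$ (as $\cM$ has degree $1$) and hence Galois-stable, so $2\deg\cE_{j}\leq\dim p(T\cM)=4$. And if $\deg\cE_{j}=1$ then, again by \cite{AW_field}, $p(T\cE_{j})=p(T)$ would be a flat subbundle of $H^{1}$ defined over $\bQ$ over $\cE_{j}$, so $\Lambda\cap p(T)$ would be a full-rank lattice spanning a $\bQ$-subspace preserved by the scalar $\mu=\theta|_{p(T)}$, forcing $\mu\in\bQ$, a contradiction. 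Hence $\deg\cE_{j}=2$, so each $\cE_{j}$ is a codimension $2$, degree $2$, rank $1$ affine invariant submanifold (matching the second case of Lemma~\ref{L:finite}).

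The conceptual core is the parity/orthogonality argument of the first paragraph, which forces $W_{\mu}=p(T)$; the points requiring the most care are the global finiteness of the number of components in the second paragraph — for which I rely on McMullen's analysis and \cite{sfilip_algebraicity}, not merely on the local linear statement — and the precise form of the field-of-definition results of \cite{AW_field} invoked in the degree computation.
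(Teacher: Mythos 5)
Your proof is correct and follows the same mechanism as the references the paper cites for its omitted proof (\cite{Mc}, \cite{Mc2}, \cite{Wsurvey}, \cite{MMW}): the self-adjoint integral endomorphism $\theta$ is locally constant by discreteness, so the eigenform condition $p(\omega)\in W_\mu$ becomes a pair of linear equations on absolute periods with coefficients in $\bQ(\mu)$, and the rank, codimension, and degree are read off from this local description. Since the paper itself omits the argument, there is no competing route in the paper to compare against; your fleshed-out version, including the symplectic-orthogonality step forcing $W_\mu = p(T)$ and the degree bound from Galois conjugates inside the $4$-dimensional $p(T\cM)$, is what the citations supply.
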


The proof is omitted and is almost identical to that of \cite[Proposition 2.5]{Wsurvey},  \cite[Theorem 7.2]{Mc}, and \cite[Theorem 3.2]{Mc2}, see also \cite[Section 7]{MMW}. 

The following proof arose from a conversation with Ronen Mukamel. 

\begin{proof}[Proof of Theorem \ref{T:H2}.]
	Let $U$ be an open subset of $\cM$, and pick $m\in U$. Using period coordinates, we  identify $U$ with an open set in $(T\cM)_m$. 
	
	Pick any order $\cO$ in any real quadratic field $K$, and consider any action of $\cO$ of $p(T\cM)_m$ by self-adjoint transformations that preserve the integer lattice. Since the transformations in $\cO$ are self-adjoint, they are diagonalizable, and since $\cO$ is abelian they preserve each others eigenspaces. 
	
	Let $v$ be an eigenvector for the action of $\cO$. Since $\cM$ is degree 1, we may define $Sp(p(T\cM)_m, \bQ)$ to be the group of symplectic transformations of $p(T\cM)_m$ preserving the set of rational points. Note $Sp(p(T\cM)_m, \bQ)$ is isomorphic to $Sp(4, \bQ)$, which is dense in $Sp(4, \bR)$. Hence we can find $\gamma\in Sp(p(T\cM)_m, \bQ)$ so that $\gamma v\in p(U)$. 
	
	We can define a new action of $\cO$ on $p(T\cM)_m$ by conjugating the original action by $\gamma$. The resulting action is via self-adjoint transformations that act via a scalar on $v'$. There exists $N$ so that $\gamma$ is in the set $Sp(4, \frac1N \bZ)$. The restriction of the resulting action to $\cO'=N\cO$ preserves the integer lattice. 
	
	Hence if we pick $(X, \omega)\in U$  with $p(\omega)=v'$, we get that $(X,\omega)$ is an eigenform for real multiplication by $\cO'$. By Lemma \ref{L:RM}, this gives the result. 
\end{proof}


\appendix
\section{Analyticity and polynomiality of measurable bundles in relative cohomology}
\label{app:an_poly}

Throughout this appendix, we work over a fixed affine invariant submanifold $\cM$.
Over $\cM$ we have the exact sequence of bundles
\begin{align}
  \label{eqn:app_ses_H1rel}
  0 \to \ker p \to H^1_{rel} \xrightarrow{p} H^1 \to 0
\end{align}
and we assume that $\ker p$ is trivialized (e.g. by passing to a finite cover).
The bundles above are real, but their complexifications contain holomorphic subbundles $ H^{1,0} $ and $ \widetilde{H^{1,0}} $ of holomorphic $ 1 $-forms, inducing variations of Hodge structures.

\subsection{Analyticity}

To handle the relative cohomology bundle, the first step is to show that any measurable $ GL_2^+(\bR) $-equivariant subbundle in $ H^1_{rel} $ must in fact be real-analytic.
This extends \cite[Thm. 7.7]{sfilip_ssimple} and was used in the proof of Theorem \ref{thm:limit_relative}. 

Let $ E\subset H^{1} $ be an irreducible bundle over a fixed affine manifold $ \cM $, and let $ \widetilde{E}:= p^{-1}(E) $.
Recall that we have the bundles of holomorphic $ 1 $-forms $ E^{1,0}\subset E_\bC $ and $ \widetilde{E^{1,0}}\subset \widetilde{E_{\bC}} $, and the forgetful map $p$ is an isomorphism from $ \widetilde{E^{1,0}}$ to $ E^{1,0}$.

Recall that $\xi \in \Hom(E,\ker p)$ defines a unipotent automorphism $ v\mapsto v + \xi(p(v)) $ of $\widetilde{E}$, and all automorphisms of $ \widetilde{E} $ which act as the identity on $ \ker p $ and $ E $ are of this form. Hence the unipotent part of the algebraic hull of $ \widetilde{E} $, denoted $ S $, is naturally contained in $ \Hom(E,\ker p) $. Moreover, since $ E $ carries a polarized weight one variation of Hodge structures, so does $ \Hom(E,\ker p )=E^\vee \otimes \ker p $, where $ \ker p $ is equipped with the trivial Hodge structure.

\begin{lem}
	\label{lemma:S_alg_hull}
	With notation as above, we have:
	  \begin{itemize}
	  	\item [(i)] The bundle $ S\subset \Hom(E,\ker p) $ respects the Hodge structure, i.e. has a Hodge decomposition compatible with that of $ \Hom(E,\ker p) $.
	  	Moreover $ S $ is invariant by the algebraic hull of $ E $.
	  	\item [(ii)] The bundle $ S $ can be alternatively described as the smallest bundle with the following property.
	  	Consider the bundle $ \widetilde{E^{1,0}_x} $ and its complex conjugate denoted $ \widetilde{E^{0,1}_x} $, where $ x\in \cM $.
	  	Then for any $ g\in \GL_2^+(\bR) $ small, using parallel transport for flat identifications, the bundle $ \widetilde{E^{1,0}_x} \oplus \widetilde{E^{0,1}_x}$ can be taken to $ \widetilde{E^{1,0}_{gx}}\oplus \widetilde{E^{0,1}_{gx}} $ using a transformation in $ S $.
	  \end{itemize}
\end{lem}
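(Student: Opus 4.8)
The plan is to unwind the two claims by comparing the unipotent radical of the algebraic hull of $\widetilde{E}$ with the variation of Hodge structure on $\Hom(E,\ker p)$. For part (i), first recall that, since the algebraic hull is defined via equivariant tensors, the subbundle $S\subset\Hom(E,\ker p)$ is itself equivariant: it is the fiberwise unipotent radical of a group determined by a finite collection of $\GL_2^+(\bR)$-equivariant tensors, so $S$ is preserved by the reductive part of the algebraic hull of $\widetilde{E}$, and hence in particular by the algebraic hull of $E$ acting on $\Hom(E,\ker p)$. To see that $S$ respects the Hodge structure, I would invoke the rigidity of equivariant subbundles: by Theorem~\ref{thm:phi_polynomial} (in the relative version established in Proposition~\ref{prop:an_H1rel}), any equivariant measurable subbundle of a tensor construction on $H^1_{rel}$ is real-analytic, and the usual argument (as in \cite{sfilip_ssimple}) upgrades such equivariant analytic subbundles of a polarized VHS to sub-VHS; concretely, one applies the second fundamental form / semisimplicity argument to conclude that the equivariant analytic bundle $S$ is flat on each $\GL_2^+(\bR)$ orbit in a way forcing it to split according to Hodge type. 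Thus $S$ inherits a Hodge decomposition compatible with that of $\Hom(E,\ker p)=E^\vee\otimes\ker p$.

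For part (ii), I would argue in two directions. One inclusion: the bundle $\widetilde{E^{1,0}}\oplus\widetilde{E^{0,1}}$ is a lift of $E^{1,0}\oplus E^{0,1}=E_\bC$ to $\widetilde{E_\bC}$, and two such lifts differ precisely by an element of $\Hom(E,\ker p)_\bC$. As $g\in\GL_2^+(\bR)$ varies near the identity and we trivialize $\widetilde{E}$ by parallel transport, the discrepancy between the transported bundle $\widetilde{E^{1,0}_x}\oplus\widetilde{E^{0,1}_x}$ and the target $\widetilde{E^{1,0}_{gx}}\oplus\widetilde{E^{0,1}_{gx}}$ is measured by an element of $\Hom(E,\ker p)$; the point is that these discrepancies all lie in the unipotent part $S$ of the algebraic hull, because the holomorphic subbundle $\widetilde{E^{1,0}}$ itself, being $\GL_2^+(\bR)$-equivariant, determines an equivariant object whose transition cocycle lies in the algebraic hull. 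So the smallest bundle $S'$ with the stated transport property satisfies $S'\subseteq S$. Conversely, to see $S\subseteq S'$: the unipotent part of the algebraic hull is by definition the smallest subbundle such that $\widetilde{E}$ can be measurably trivialized with cocycle landing in $S\rtimes(\text{alg.\ hull of }E)$. Since the Hodge flag is itself equivariant, such a trivialization can be chosen compatibly with the Hodge flag, so the unipotent cocycle of the Hodge flag already generates all of $S$; hence $S\subseteq S'$, giving equality.

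The main obstacle, I expect, is the first direction of (ii) combined with the Hodge-compatibility in (i): one must be careful that the ``discrepancy cocycle'' of the lifted Hodge subbundle actually lands in the algebraic hull and not merely in the full $\Hom(E,\ker p)$. This is where minimality of the algebraic hull is used: one shows that if the transport discrepancies were confined to a proper subbundle $S_0\subsetneq\Hom(E,\ker p)$, then one could build an equivariant (hence, by the analyticity result, real-analytic) flat-on-orbits trivialization of $\widetilde{E}$ with cocycle in $S_0\rtimes(\text{alg.\ hull of }E)$, contradicting that $S$ is the unipotent radical of the algebraic hull — and conversely the equivariance of the Hodge flag forces $S$ to be generated by exactly these discrepancies. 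The rest is bookkeeping with the exact sequence $0\to\ker p\to\widetilde{E}\to E\to 0$ and the identification of automorphisms trivial on $\ker p$ and $E$ with $\Hom(E,\ker p)$, already recalled before the lemma.
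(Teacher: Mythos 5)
Your proof of part (i) is in the right spirit: the $\GL_2^+(\bR)$-equivariance of the bundle $S$, combined with the semisimplicity machinery of \cite{sfilip_ssimple}, forces Hodge-compatibility, and this matches the paper. The gap is in part (ii), and it is a genuine one.

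You correctly single out the crux: one must show that the ``discrepancy cocycle'' of the transported Hodge lift $\widetilde{E^{1,0}}\oplus\widetilde{E^{0,1}}$ actually lands in $S$, i.e.\ $S'\subseteq S$. But both of your arguments for this containment beg the question. Your first attempt asserts that the transition cocycle of the equivariant Hodge flag ``lies in the algebraic hull'' — but the algebraic hull is defined via a specific measurable trivialization, and an arbitrary $\GL_2^+(\bR)$-equivariant section such as the Hodge lift gives an a priori \emph{different} trivialization, whose unipotent cocycle could occupy all of $\Hom(E,\ker p)$. Your second attempt claims ``such a trivialization can be chosen compatibly with the Hodge flag'' — this is precisely the statement that needs to be proved, not a fact one can invoke. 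What you actually establish cleanly, twice over, is the easy containment $S\subseteq S'$: trivialize via the Hodge lift, note the unipotent part of the resulting cocycle lands in $S'$ by definition, hence the algebraic hull sits inside $S'\rtimes(\text{hull of }E)$.

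The missing ingredient, which the paper uses, is a genuine rigidity theorem, namely \cite[Thm.~4.2]{sfilip_algebraicity}. The paper observes that a reduction of the hull to unipotent radical $S$ produces a preferred class of lifts $\sigma_{\mathrm{hull}}\colon E\to\widetilde{E}$ well-defined modulo $S$; the difference $\sigma_{\mathrm{hull}}-\sigma_{\mathrm{hol}}$ is then a $\GL_2^+(\bR)$-equivariant section of $Q=\Hom(E,\ker p)/S$, which by part (i) carries a weight-one VHS. The cited vanishing theorem forces this section to vanish, showing that the Hodge lift \emph{is} a reduction realizing $S$, and only then does $S'\subseteq S$ follow. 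Without this algebraicity input there is no reason the Hodge flag should be compatible with the measurable reduction defining the hull; your appeal to ``minimality'' cannot substitute for it, since minimality only gives the opposite inclusion.
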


\begin{proof}
	For part (i), note that the algebraic hull is $ \GL_2^+(\bR) $-invariant (viewed as a group above each point) so in particular $ S $ is an invariant bundle.
	Therefore $ S $ must also respect the Hodge structure, by the semisimplicity results established in \cite{sfilip_ssimple}, which apply to any $ \GL_2^+(\bR) $-invariant subbundle of a weight 1 variation of Hodge structure over $\cM$.
	
	To establish part (ii) let $ S' $ denote the bundle described in it. First we show $S\subset S'$. Indeed, at each point we may pick a basis for $\tilde{E}$ which consists of a fixed basis for $\ker(p)\cap \tilde{E}$ together with any basis of $ \widetilde{E^{1,0}_{x}}\oplus \widetilde{E^{0,1}_{x}} $. In this basis we may consider the ``the unipotent part'' of an element in $ \End{\widetilde{E}} $ preserving $ \ker p $, and by definition the unipotent part of the cocycle is contained in $S'$.

	Suppose therefore that the algebraic hull could have been reduced to have unipotent part $ S\subset S' $.
	The reduction to algebraic hull with unipotent part $ S$ means the following.
	At every point $ x\in \cM $, we can pick a subspace $ \widetilde{E}_x' \subset \widetilde{E}$, projecting isomorphically to $ E $.
	Moreover, $ \widetilde{E}_x' $ is well-defined up to the action of $ S $ on $ \widetilde{E} $, and these choices and ambiguities are $ \GL_2^+(\bR) $-invariant.
	
	So locally on a $ \GL_2^+(\bR) $-orbit we have a map $ \sigma_{hull}:E \to \widetilde{E} $ giving a section of the projection (i.e. a map which, when composed with the projection, gives the identity).
	The section $ \sigma_{hull} $ is well-defined up to the action of $ S $, and can be viewed as a flat section of $ \Hom(E,\widetilde{E})/S $.
	
	We also have the canonical section $ \sigma_{hol}:E \to \widetilde{E} $, which is defined as the inverse of $p$ restricted to $ \widetilde{E^{1,0}} $ plus its complex conjugate.
	The difference $ \sigma_{hull}-\sigma_{hol} $ is an element of $ \Hom(E,\ker p) $, well-defined up to the action of $ S $, since $ \sigma_{hull} $ is.
	Now the image of $ \sigma_{hull}-\sigma_{hol} $ in $Q:= \Hom(E,\ker p)/S $ is well-defined, and moreover $ Q $ carries a weight $ 1 $ variation of Hodge structure, since $ S $ is compatible with the Hodge structure.
	
	To finish, note that \cite[Thm. 4.2]{sfilip_algebraicity} applies here (although stated only for certain parts of $ H^1 $ and $ H^1_{rel} $, the proof works in the present context). 
	It implies that $ \sigma_{hull}- \sigma_{hol} $ vanishes in $ Q $, in particular the subspaces we started with $ \widetilde{E}_x' $ can be taken to $ \sigma_{hol}(E) $ by elements of $ S $; therefore we can assume that they are, in fact, equal.
	By the definition of $ S' $, this implies that $ S'=S $.
\end{proof}

Using the above result as a preliminary step, we can now establish the analyticity of the algebraic hull in relative cohomology.
Note that although the algebraic hull of $H^1$ and $H^1_{rel}$ differ in a unipotent part only, the key difference is in the \emph{lift} of the semisimple part from $ H^1 $ to $ H^1_{rel} $.

\begin{prop}
  \label{prop:an_H1rel}
  Let $G$ be the (measurable) algebraic hull of the Kon\-tse\-vich--Zorich cocycle over an affine manifold $\cM$ for the $\GL_2^+(\bR)$-action on the relative cohomology bundle $H^1_{rel}$.
  Then the reduction to $G$ can be made real-analytic.
\end{prop}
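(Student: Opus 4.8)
The plan is to deduce the proposition from the corresponding fact in absolute cohomology, \cite[Thm.~1.5]{sfilip_ssimple} (which identifies the measurable and the real-analytic algebraic hulls of $H^1$ and its tensor constructions), by supplying the one ingredient absent in the absolute case: a $\GL_2^+(\bR)$-equivariant real-analytic \emph{lift} of the reductive part of the hull from $H^1$ to $H^1_{rel}$. As the remark preceding the proposition indicates, the hulls of $H^1$ and $H^1_{rel}$ differ only in a unipotent part, so all the new content lies in producing this lift, and the Hodge filtration supplies it.

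First I would fix notation: passing to a finite cover of the ambient stratum, assume $\ker p$ is trivialized. Write $G\subset\Aut(H^1_{rel})$ for the measurable algebraic hull, $G^{red}$ for its image in $\Sp(H^1)$ --- which is exactly the algebraic hull of $H^1$ --- and $S:=G\cap\Hom(H^1,\ker p)$ for its unipotent radical (this intersection makes sense since $\Hom(H^1,\ker p)$ is canonically the unipotent radical of $\Aut(H^1_{rel})$, namely the transformations acting trivially on $\ker p$ and on $H^1$). Thus $G$ is an extension of $G^{red}$ by $S$, and by Lemma~\ref{lemma:S_alg_hull}(i), applied with $E=H^1$ (its proof nowhere uses irreducibility of $E$), $S$ is a $G^{red}$-invariant subbundle of $\Hom(H^1,\ker p)$ compatible with the Hodge decomposition.

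The key step is the lift. Consider the real subbundle $\sigma\subset H^1_{rel}$ whose complexification is $\widetilde{H^{1,0}}\oplus\overline{\widetilde{H^{1,0}}}$. It is real-analytic because $\widetilde{H^{1,0}}$ is a holomorphic subbundle of $(H^1_{rel})_\bC$ over the complex-analytic manifold $\cM$; the map $p$ restricts to an isomorphism from $\sigma$ onto $H^1$, so $\sigma$ splits the sequence $0\to\ker p\to H^1_{rel}\to H^1\to 0$; and, by Lemma~\ref{lemma:S_alg_hull}(ii) (again with $E=H^1$), transporting $\sigma$ along $\GL_2^+(\bR)$-orbits by the flat connection changes it only by transformations in $S$ --- that is, the $S$-orbit of $\sigma$ is a $\GL_2^+(\bR)$-equivariant real-analytic section of the bundle of $S$-classes of splittings. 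Moreover, the proof of that lemma shows the reduction of the algebraic hull may be arranged so that $\sigma$ \emph{is} the chosen splitting; equivalently, the trivialization $\Aut(H^1_{rel})\cong\Hom(H^1,\ker p)\rtimes\Sp(H^1)$ determined by $\sigma$ carries $G$ onto $S\rtimes G^{red}$. This is the ``lift of the semisimple part from $H^1$ to $H^1_{rel}$'' referred to before the proposition.

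It remains to assemble. By \cite[Thm.~1.5]{sfilip_ssimple} the structure group of $H^1$ admits a $\GL_2^+(\bR)$-equivariant real-analytic reduction to $G^{red}$; using the real-analytic splitting $\sigma$ and the trivialization of $\ker p$, this pulls back to a real-analytic $G^{red}$-reduction of $H^1_{rel}$ which is equivariant modulo $S$, and its saturation by the subbundle $S$ --- legitimate because $G^{red}$ normalizes $S$ --- is a genuinely equivariant real-analytic reduction to $S\rtimes G^{red}=G$. Since $G$ is by definition the measurable algebraic hull, this is the asserted real-analytic reduction. I expect the main obstacle to be the lift: one needs not merely that $\sigma$ is \emph{some} analytic splitting, but that modulo $S$ it is $\GL_2^+(\bR)$-equivariant and implements the genuine reduction of the algebraic hull, so that no information about $G$ is lost in passing from the $H^1$-reduction to the $H^1_{rel}$-reduction. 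This is exactly what Lemma~\ref{lemma:S_alg_hull}, and beneath it the rigidity result \cite[Thm.~4.2]{sfilip_algebraicity} invoked in its proof, delivers; the remaining points --- the finite cover and the saturation of structure groups --- are routine.
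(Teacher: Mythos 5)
Your proof takes essentially the same route as the paper's: realize the unipotent radical $S$ of $G$ inside $\Hom(H^1,\ker p)$, use Lemma~\ref{lemma:S_alg_hull} to identify $S$ with the Hodge-compatible ``drift'' of the real-analytic splitting $\sigma$ built from $\widetilde{H^{1,0}}$, and then glue this to the $H^1$ analyticity of \cite{sfilip_ssimple}. The one place you diverge is that you apply Lemma~\ref{lemma:S_alg_hull} directly with $E=H^1$, whereas the paper first splits $H^1$ into $G^{ss}$-irreducible pieces (with Proposition~\ref{prop:mult} dispatching multiplicities) and only then invokes the lemma piece by piece. Your streamlining is plausible---parts (i) and (ii) of the lemma never visibly quote irreducibility of $E$---but it rests on the assertion that the rigidity input \cite[Thm.~4.2]{sfilip_algebraicity} (whose scope the paper already stretches, noting it is ``stated only for certain parts of $H^1$ and $H^1_{rel}$'') applies to the quotient $\Hom(H^1,\ker p)/S$ wholesale, whereas the paper opts for the safer path of applying it only to $\Hom(E,\ker p)/S_E$ for an irreducible $E$, where the authors are more confident the cited theorem's proof carries over. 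If you mean to bypass the irreducible decomposition you should state and justify that extension explicitly rather than rely on the parenthetical ``nowhere uses irreducibility''; otherwise, the cleaner move is to retain the decomposition, costing you only the multiplicity bookkeeping of Proposition~\ref{prop:mult}. (Incidentally, the paper's citations of ``Lemma~\ref{lemma:S_alg_hull}(ii)'' and ``part (iii)'' in its own proof are off by one---the lemma has only parts (i) and (ii)---and your usage of (i) for Hodge compatibility and (ii) for the splitting is the corrected reading.)
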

\begin{proof}
  Decompose the algebraic hull $G$ according to the short exact sequence \ref{eqn:app_ses_H1rel} into a unipotent and reductive part:
  \begin{align}
    0 \to G^u \to G \to G^{ss} \to 0.
  \end{align}
  According to \cite{sfilip_ssimple}, and as discussed in the proof of Theorem \ref{thm:limit_absolute} the tensors defining $G^{ss}$ can be picked real-analytically.
  Note that $G^u$ is a subgroup of $\Hom(H^1,\ker(p))$ and hence decomposes according to the action of $G^{ss}$ on $H^1$.
  
  Let $E\subset H^1$ be an irreducible piece for the action of $G^{ss}$ (by Proposition~\ref{prop:mult} we can reduce to this case).
  It suffices to check that $G^{u}$ is defined by real-analytic bundles on the piece $\Hom(E,\ker(p))$.
  Indeed, $G$ is of the form $\prod_E G_E^{ss}\ltimes G^u_E$ where $G_E^{ss}$ is the algebraic hull of an irreducible piece $E \subset H^1$ and $G^u_E \subset \Hom(E,\ker(p))$.
  
  The algebraic hull $G_E$ on  $p^{-1}(E)$ sits in the exact sequence
  \begin{align}
    0 \to G^u_E \to G_E \to G^{ss}_E \to 0.
  \end{align}
  The unipotent part $ S_E:=G^{u}_E\subset \Hom(E,\ker p) $ respects the Hodge structure and is invariant under $ G^{ss}_E $, by Lemma~\ref{lemma:S_alg_hull}(ii).
  Finally, part (iii) of the same Lemma shows how to real-analytically reduce the algebraic hull to have unipotent part contained in $ S_E $.
  Indeed, the bundles $ \widetilde{E^{1,0}} $ and its complex-conjugate vary real-analytically, and give a real-analytic splitting $ G_E \simeq G^{ss}_E \ltimes S_E $, where $ G^{ss}_E $ is viewed as acting via the splitting of $ p^{-1}(E)\to E  $ coming from $ \widetilde{E^{1,0}} $.
\end{proof}

\subsection{Polynomiality}

We can now establish polynomiality of the algebraic hull, where polynomiality is understood in the following sense.
Consider $\GL_2^+(\bR)$-invariant bundles $\tilde{E}\subset H^1_{rel}$, or perhaps some tensor powers thereof.
Note that because $H^1$ is a symplectic bundle, and $\ker p$ is trivial, the bundle $H^1_{rel}$ is equipped with a natural volume form.
Any bundle $ \widetilde{E} $ obtained by tensor constructions from subbundles in $ H^1_{rel} $ will be also filtered by bundles with volume forms, and so we can assume $ \widetilde{E} $ has a volume form.
Therefore, its top exterior power $\Lambda^{(\dim \tilde{E})} \tilde{E}$ carries a canonical trivializing vector, which is $\GL_2^+(\bR)$-equivariant and denoted $ v_{\tilde{E}} $.
The coordinates of $v_{\tilde{E}}$ in $ \Lambda^{\dim \tilde{E}} (H^{1}_{rel})$, in local flat trivializations of the bundles, give functions on $\cM$ (these are just the Pl\"ucker coordinates on a Grassmanian).
We will show that these functions are polynomial, when viewed in period coordinates on $\cM$.
This is meant in the sense of \eqref{eqn:loc_form_eqvt_section}, i.e. as polynomials divided by the area function to some power.

\begin{prop}
\label{prop:poly_H1rel}
 Suppose $\tilde{E}\subset H^1_{rel}$ is a measurable $\GL_2^+(\bR)$-invariant subbundle of the relative cohomology bundle, or some tensor power of $H^1_{rel}$.
 Then $\tilde{E}$ is in fact polynomial.
\end{prop}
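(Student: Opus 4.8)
The plan is to deduce the statement from the proof of \cite[Thm.~7.7]{sfilip_ssimple}, feeding in the real-analyticity established in Proposition~\ref{prop:an_H1rel}. First I would reformulate the claim. The subbundle $\tilde{E}$ (of $H^1_{rel}$, or of a tensor power) is recovered from the line subbundle $\Lambda^{\dim\tilde{E}}\tilde{E}$ of the corresponding tensor construction $\mathbf{H}$ on $H^1_{rel}$ via the Pl\"ucker embedding, and the canonical trivializing vector $v_{\tilde{E}}$ of $\Lambda^{\dim\tilde{E}}\tilde{E}$ is a $\GL_2^+(\bR)$-equivariant section of $\mathbf{H}$. So it is enough to show that $v_{\tilde{E}}$, written in local flat trivializations and in period coordinates on $\cM$, has the form of \eqref{eqn:loc_form_eqvt_section}.

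The first step is to show $v_{\tilde{E}}$ is real-analytic. By Proposition~\ref{prop:an_H1rel} the algebraic hull of $H^1_{rel}$ admits a real-analytic reduction, so every measurable $\GL_2^+(\bR)$-invariant line subbundle of a tensor construction on $H^1_{rel}$ is real-analytic: it is constant in a $G$-adapted real-analytic trivialization, being preserved by $G$, hence real-analytic in a flat trivialization after the real-analytic gauge change. Applying this to $\Lambda^{\dim\tilde{E}}\tilde{E}$, and noting that the normalization defining $v_{\tilde{E}}$ uses only the flat volume forms coming from the symplectic form on $H^1$ and the trivialization of $\ker p$, we conclude that $\tilde{E}$, and hence $v_{\tilde{E}}$, is real-analytic.

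The second step is to upgrade ``real-analytic and $\GL_2^+(\bR)$-equivariant'' to the polynomial normal form \eqref{eqn:loc_form_eqvt_section}. For this I would transcribe the argument of \cite[Thm.~7.7]{sfilip_ssimple}: over a simply connected chart, in a flat trivialization, $v_{\tilde{E}}$ becomes a real-analytic $\GL_2^+(\bR)$-invariant $\mathbf{H}_{m_0}$-valued function on the period-coordinate domain $U\subset T\cM_{m_0}\otimes\bC$, with $\GL_2^+(\bR)$ acting through the standard representation on the pair $(\boldx,\boldy)$ and with the homogeneity under $\bR^\times$-scaling recorded by a power of the area function $A(\boldx,\boldy)$. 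Descending this function to $\Gr^\circ(2,T\cM_{m_0})$ as in Proposition~\ref{prop:phi_ext}, the remainder of Filip's argument uses only the algebraicity of the $\GL_2^+(\bR)$-action on period coordinates --- which are the sublocal system $T\cM$, the same whether one works inside $H^1$ or $H^1_{rel}$ --- together with the invariant-theoretic fact that an $\SL_2(\bC)$-invariant holomorphic function of the requisite growth on the complement of $\{A=0\}$ is a rational function in the $2\times2$ minors, with poles only along $\{A=0\}$. That part therefore applies verbatim, and yields \eqref{eqn:loc_form_eqvt_section}.

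I expect the only genuinely new point to be the real-analyticity in the second paragraph, which is precisely what forced the earlier Lemma~\ref{lemma:S_alg_hull} and Proposition~\ref{prop:an_H1rel}: the semisimplicity results of \cite{sfilip_ssimple} control directly only the reductive part of the algebraic hull that comes from $H^1$, whereas the unipotent lift to $H^1_{rel}$ has to be pinned down separately through the Hodge-theoretic description of the unipotent part $S$. Once that is available, the passage to the polynomial form is insensitive to the distinction between $H^1$ and $H^1_{rel}$, since it is carried out entirely in period coordinates.
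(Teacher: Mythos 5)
Your Pl\"ucker reformulation and the real-analyticity step via Proposition~\ref{prop:an_H1rel} match the paper. The gap is in your second step, where you pass from ``real-analytic and $\GL_2^+(\bR)$-invariant'' to polynomial. You describe this as an invariant-theoretic statement: descend to $\Gr^\circ(2,T\cM_{m_0})$ and observe that an $\SL_2(\bC)$-invariant holomorphic function ``of the requisite growth'' on the complement of $\{A=0\}$ is a rational function in the minors. But this puts the conclusion into the hypothesis. The descended function is a priori defined only on the image of a local period chart, which is a small open subset of $\Gr^\circ$, not the whole complement of $\{A=0\}$; to conclude it is rational you would need to know it already has polynomial growth, and the whole point is to establish exactly that. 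You are effectively invoking Proposition~\ref{prop:phi_ext}, but that proposition takes the local polynomial normal form \eqref{eqn:loc_form_eqvt_section} as \emph{input} (it is the output of Theorem~\ref{thm:phi_polynomial}) and merely extends it; it does not produce the polynomiality.

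What actually produces the polynomial normal form in \cite[Prop.~7.5, Thm.~7.7]{sfilip_ssimple}, and what the paper's proof of this proposition repeats, is a quantitative dynamical argument. One Taylor-expands $\phi$ along an unstable leaf, $\tilde\phi(x,v)=\sum_\alpha c_\alpha(x)v^\alpha$, uses equivariance under the geodesic flow $g_t$ to compare the expansion at $x$ with the pulled-back expansion at a return time $g_t x$, and then invokes Forni's spectral gap: $dg_t$ contracts the unstable period directions at a rate strictly faster than the Lyapunov exponents of the (tensor-power) Kontsevich--Zorich cocycle in which $\phi$ takes values, so all $c_\alpha$ with $|\alpha|$ large must vanish. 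This truncates the Taylor series to a polynomial on each stable/unstable leaf; joint polynomiality then follows from the elementary Propositions 7.6--7.7 of \cite{sfilip_ssimple}. None of this is captured by the algebraic descent to the Grassmannian, and without the spectral-gap input your argument does not close.
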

\begin{proof}
  By Prop.~\ref{prop:an_H1rel}, the bundle $\tilde{E}$ is at least real-analytic.
  Next, recall that polynomiality in $H^1_{rel}$ or its tensor powers is understood in terms of Pl\"ucker coordinates of the bundle.
  In other words, we have an invariant section of some tensor power of $H^1_{rel}$ given by the top exterior power $\Lambda^{\dim \tilde{E}}\tilde{E}$ (normalized by a fixed volume form). 
  
  But now, the same proof as in Prop.~7.5 and Thm.~7.7 of
  \cite{sfilip_ssimple} applies to give that the section must
  necessarily be polynomial in period coordinates.

  Let us recall a sketch of proof.
  Let $\phi$ be some real-analytic, $\GL_2^+\bR$-equivariant section of some tensor power of the Kontsevich--Zorich cocycle (on $H^1_{rel}$).
  The first part, following the proof of Prop.~7.5 in loc.cit., is to show that $\phi$ is polynomial on each stable, or each unstable, leaf.
  The joint polynomiality is then established as in Prop.~7.6 and Thm.~7.7, as those are simply statements about polynomials.
  
  To establish polynomiality on a stable leaf, in a local chart around a point $x\in \cM$ define $\tilde{\phi}(x,v):=\phi(x+v)$ where $v$ is a (sufficiently small) tangent vector in the unstable direction.
  We then have a Taylor expansion
  \begin{align*}
  \tilde{\phi}(x,v) = \sum_\alpha c_\alpha(x)v^\alpha
  \end{align*}
  where $\alpha$ denotes a multi-index and $c_\alpha(x)$ are vectors in the same bundle as $\phi$.
  From the equivariance under the Teichm\"uller geodesic flow, for any large time $t$ such that $g_t(x)$ returns to the same chart, we have:
  \begin{align*}
  \sum_\alpha c_\alpha(x) v^\alpha = \sum_\alpha g_{-t}c_\alpha(g_t x)(dg_t v)^\alpha
  \end{align*}
  where to obtain the right-hand side, we have pulled back by $g_{-t}$ the expansion near $g_t x$.
  Note that $dg_t$ denotes the cocycle on the tangent space, which by Forni's spectral gap result for the Lyapunov spectrum has a definite contraction, linear in time.
  Thus if $c_\alpha$ must vanish for $\alpha$ sufficiently large depending on the spectral gap and the Lyapunov exponents of the bundle in which $\phi$ lives.
\end{proof}

\section{Algebraic Hulls and Bundles}
\label{app:alg_hull}

\subsection{Algebraic Hulls following Zimmer}
\label{app:zimmer_alg_hull}

\bold{Setup.}
Recall that Zimmer \cite[Sec. 4.2]{Zimmer_book} works in the following setup.
We have a group action on a space $ A \curvearrowright X $.

\begin{definition}
	\label{def:Zimmer_cocycle}
	A cocycle for the action of $ A $ on $ X $ is a map $ \alpha: A \times X \to \GL_n(\bR) $ satisfying the cocycle relation
	\begin{align*}
	\alpha(a_1,a_2 x) \cdot \alpha(a_2,x)= \alpha(a_1 a_2, x).
	\end{align*}
	This induces an action of $ A $ on the trivial vector bundle $ X\times \bR^n $ by linear transformations on the fiber.
\end{definition}

Moreover, there is a notion of equivalence (or cohomology) of cocycles.
Namely, two cocycles $ \alpha $ and $ \beta $ as above are equivalent if there exists $ C:X\to \GL_n(\bR) $ such that $ C(ax)\circ \alpha(a,x) = \beta(a,x)\circ C(x)  $.

To have a more intrinsic view, one can work with general bundles $ V\to X $, with a lift of the action of $ A $ to $ V $ by linear transformations on the fiber.
Then a description as in Definition~\ref{def:Zimmer_cocycle} is obtained by trivializing the bundle so that $ V\simeq X\times \bR^n $.
Different trivializations give cohomologous cocycles.

Recall also Zimmer's definition of the algebraic hull.
\begin{definition}
	\label{def:alg_hull_zimmer}
	The Zimmer algebraic hull of a cocycle $ \alpha $ is the smallest algebraic group $ H\subset \GL_n(\bR) $ such that $ \alpha $ is cohomologous to a cocycle $ \beta $ taking values in $ H $.
\end{definition}
Note that with this definition, the algebraic hull is well-defined only up to conjugacy, and is not a priori clear why it is even well-defined.
Namely, one has to check that if the cocycle can be conjugated to take values in $ H_1 $ and $ H_2 $, then there is a conjugation with values in $ H_1 \cap g H_2 g^{-1} $, for some $ g\in \GL_n(\bR) $.

\begin{prop}
	The algebraic hull as in Definition~\ref{def:alg_hull_tensor} of the main text (call it the tensor algebraic hull), and Zimmer's Definition~\ref{def:alg_hull_zimmer} are equivalent.
\end{prop}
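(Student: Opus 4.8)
The plan is to prove two inclusions, each up to conjugacy, which together pin down both hulls. Fix once and for all a measurable trivialization $V \cong X\times\bR^n$, so that the lifted $A$-action on $V$ is recorded by a cocycle $\alpha:A\times X\to\GL_n(\bR)$ in the sense of Definition~\ref{def:Zimmer_cocycle}; changing the trivialization replaces $\alpha$ by a cohomologous cocycle and conjugates the fiberwise groups $G_x$ of Definition~\ref{def:alg_hull_tensor} by the corresponding cochain. Throughout I will use that $A\curvearrowright(X,\mu)$ is ergodic, that algebraic actions have countably separated orbit spaces (Proposition~\ref{prop:separability}), and Chevalley's Theorem~\ref{thm:chevalley}.

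First I would show that the tensor hull is contained in a conjugate of Zimmer's hull. Let $H\subset\GL_n(\bR)$ be the Zimmer hull, so there is a measurable $C:X\to\GL_n(\bR)$ with $\beta(a,x):=C(ax)\,\alpha(a,x)\,C(x)^{-1}$ valued in $H$. By the first part of Theorem~\ref{thm:chevalley} applied to $H\subset\GL_n(\bR)$ there is a tensor construction $\cS(\bR^n)$ and a line $l_0\subset\cS(\bR^n)$ with $H=\Stab_{\GL_n(\bR)}(l_0)$. Define $\ell_x:=\cS(C(x))^{-1}l_0$, viewed as a line in the fiber $\cS(V)_x$. A short computation, using only that $\beta$ takes values in $\Stab(l_0)$ together with the identity $\alpha(a,x)C(x)^{-1}=C(ax)^{-1}\beta(a,x)$, gives $\cS(\alpha(a,x))\,\ell_x=\ell_{ax}$; hence $\ell$ is a measurable $A$-invariant line subbundle of $\cS(V)$. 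By definition the group $G_x$ preserves $\ell_x$, so $G_x\subseteq\Stab_{\GL_n(\bR)}(\ell_x)=C(x)^{-1}HC(x)$.

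Next I would produce a conjugation of $\alpha$ into a group conjugate to the tensor hull; this gives the reverse inclusion and, as a byproduct, re-proves that the $G_x$ form a single conjugacy class (Remark~\ref{rmk:alg_hull}(i)). By the Noetherian property (Remark~\ref{rmk:alg_hull}(ii)) finitely many $A$-invariant line subbundles $\ell^{(1)},\dots,\ell^{(k)}$ of tensor constructions cut out $G_x$; in the fixed trivialization these assemble to a measurable $A$-equivariant map $x\mapsto F_x$ into the algebraic variety $Y:=\prod_i\bP\big(\cS^{(i)}(\bR^n)\big)$, on which $\GL_n(\bR)$ acts algebraically, with $\Stab_{\GL_n(\bR)}(F_x)=G_x$. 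The $\GL_n(\bR)$-orbit of $F_x$ depends $A$-invariantly on $x$, so by ergodicity and the countable separation of $Y/\GL_n(\bR)$ (Proposition~\ref{prop:separability}) it is $\mu$-a.e.\ equal to a single orbit $\GL_n(\bR)\cdot F_0$; in particular every $G_x$ is conjugate to the algebraic group $\Stab_{\GL_n(\bR)}(F_0)$. Choosing, by measurable selection for the continuous transitive $\GL_n(\bR)$-action on that orbit, a measurable $C:X\to\GL_n(\bR)$ with $C(x)\cdot F_x=F_0$, the conjugated cocycle $\beta(a,x)=C(ax)\alpha(a,x)C(x)^{-1}$ fixes $F_0$ and hence is valued in $\Stab_{\GL_n(\bR)}(F_0)$. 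Therefore the Zimmer hull is contained in this group, which is conjugate to every $G_x$.

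Combining the two steps, $G_x$ lies in a conjugate of $H$ and $H$ lies in a conjugate of $G_x$; a standard dimension and connected-component count for algebraic groups then forces $G_x$ and $H$ to be conjugate, which is the asserted equivalence. The reductive variants are handled identically, replacing the first part of Theorem~\ref{thm:chevalley} by the second: one works with $A$-invariant vectors instead of lines and with $\GL_n(\bR)$-orbits of points of a vector space instead of projective spaces. I expect the main obstacle to be the measurable-selection step in the third paragraph -- choosing $C(x)$ measurably with $C(x)\cdot F_x=F_0$ -- which rests on the existence of measurable cross-sections for the continuous action of the locally compact second countable group $\GL_n(\bR)$ on a standard Borel $\GL_n(\bR)$-space, i.e.\ exactly the Mackey--Zimmer circle of results underlying Proposition~\ref{prop:separability}.
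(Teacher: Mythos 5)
Your proof is correct and follows essentially the same route as the paper: both directions hinge on Chevalley's Theorem~\ref{thm:chevalley}, translating lines in tensor constructions into algebraic stabilizers and back, and the paper's two inclusion arguments correspond precisely to your second and third paragraphs. The one place you go beyond the paper's write-up is where the paper simply asserts ``we pick a measurable trivialization of $V$ such that \dots the lines $l_x$ are identified with $l_m$''; you correctly identify that this needs the $\GL_n(\bR)$-orbit type of the invariant line(s) to be $\mu$-a.e.\ constant (via ergodicity and Proposition~\ref{prop:separability}) plus a measurable cross-section, and you supply both, together with the final dimension/component argument that the two inclusions up to conjugacy actually force conjugacy. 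This is the same proof with an implicit step made explicit, not a genuinely different argument.
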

Recall that by Chevalley's Theorem~\ref{thm:chevalley}, to define an algebraic subgroup of $ \GL_n(\bR) $ is equivalent to specifying a line in some tensor construction on $ \bR^n $, with the group being the stabilizer of the line.
\begin{proof}
	First, we check that the tensor algebraic hull contains the Zimmer algebraic hull.
	Suppose that we have a cocycle $ V\to X $ and its algebraic hull is defined by some line subbundle $ l\subset \mathbf{V} $ in some tensor construction on $ V $.
	Fix a model line $ l_m \subset \mathbf{T}(\bR^n) $ in a corresponding tensor construction on $ \bR^n $.
	Then we pick a measurable trivialization of $ V $ such that under the identification of each fiber $ V_x \to \bR^n $, the lines $ l_x \subset \mathbf{V} $ are identified with $ l_m \subset \mathbf{T}(\bR^n) $.
	Thus we have reduced the cocycle in the sense of Zimmer to have algebraic hull contained in the tensor algebraic hull.
	
	Conversely, suppose that under some trivialization $ V\simeq X \times \bR^n $, all the cocycle linear transformations are in some group $ H\subset \GL_n(\bR) $.
	Then $ H $ is the stabilizer of some line $ l_m\subset  \mathbf{T}(\bR^n) $ in some tensor construction.
	The line $ l_m $ pulls back to give a line subbundle $ l\subset \mathbf{V} $ in the corresponding tensor construction on $ V $.
	By definition, the cocycle preserves the line bundle $ l $, so the tensor algebraic hull is contained in Zimmer's.
\end{proof}

\subsection{Irreducible and absolutely irreducible bundles}
To end, we clarify a point regarding absolute irreducibility of bundles.
It is not used in the main text, but shows that the analyticity results apply to both irreducible and strongly irreducible bundles.

Recall that a cocycle on a vector bundle $V$ is irreducible if it has no invariant subbundle.
Similarly, a cocycle is strongly irreducible if it does not admit a finite invariant collection of subbundles $W_i\subset V$.
For instance, monodromy could permute a finite number of subspaces.
Thus a cocycle can be irreducible without being strongly irreducible.

All the results above and in \cite{sfilip_ssimple} about analyticity and polynomiality of bundles refer to irreducible bundles.
They also apply to non-irreducible bundles, by simply decomposing them into irreducible pieces.
Below, we establish the same (local) property for a collection of invariant subspaces.

\begin{prop}
	Suppose that $V$ is some irreducible piece of a tensor power of $H^1$ over $\cM$, and suppose that $V$ is not strongly irreducible for the $\GL_2^+(\bR)$-action.
	By assumption, there exists a finite collection of bundles $W_i\subset V$ which are permuted by the $\GL_2^+(\bR)$-action.
	
	Then locally (after a renumbering), each of the bundles varies polynomially in period coordinates.
\end{prop}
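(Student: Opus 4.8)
The plan is to reduce to the situation where each $W_i$ is individually $\GL_2^+(\bR)$-invariant, by passing to a suitable finite cover of $\cM$, and then to realize each $W_i$ as the image of a $\GL_2^+(\bR)$-equivariant idempotent, to which the polynomiality statement Theorem~\ref{thm:phi_polynomial} applies directly.

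First I would argue that the permutation of the $W_i$ is a monodromy phenomenon. Since the $W_i$ are distinct and the set $\{W_1,\dots,W_n\}$ is $\GL_2^+(\bR)$-invariant, the connected group $\GL_2^+(\bR)$ cannot move one $W_i$ continuously onto another, so the way the $W_i$ get permuted is locally constant and is therefore encoded by a homomorphism from the orbifold fundamental group $\Gamma$ of $\cM$ to the group of permutations of $\{W_1,\dots,W_n\}$. I would then pass to the finite cover $\cM'\to\cM$ corresponding to the kernel of this homomorphism, and pull back $V$, the $W_i$, and the period coordinates. Over $\cM'$ each $W_i$ is a $\GL_2^+(\bR)$-invariant subbundle of $V$, and since $\cM'\to\cM$ is a local isomorphism while polynomiality in period coordinates is a purely local condition, it suffices to prove that each $W_i$ varies polynomially over $\cM'$. (This is exactly why the statement allows a renumbering: there is a global labeling of the $W_i$ only on the cover.)

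Next, using that the Kontsevich--Zorich cocycle on $H^1$ is reductive (Remark~\ref{rmk:alg_hull}(iii), from \cite{AEM}), hence so is every tensor construction on it and its restriction to $\cM'$, I would invoke the existence of $\GL_2^+(\bR)$-invariant complements: there is an invariant projection $\pi_i\in\End(V)$ with image $W_i$. Being invariant for the conjugation action means precisely that $\pi_i$ is a $\GL_2^+(\bR)$-equivariant section of $\End(V)=V\otimes V^\vee$. Since $V$ is a direct summand of a tensor power of $H^1$, the bundle $\End(V)$ is a direct summand of a tensor construction on $H^1$, so Theorem~\ref{thm:phi_polynomial} applies to $\pi_i$ (extended by zero) and expresses it, near any point of $\cM'$, in the form $\pi_i=\sum_l s_l\,P_l(\boldx,\boldy)/A(\boldx,\boldy)^{k_l}$ with $s_l$ flat and the $P_l$ homogeneous $\SL_2(\bR)$-invariant polynomials of bidegree $(k_l,k_l)$. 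To pass from $\pi_i$ to $W_i=\operatorname{im}\pi_i$, I would note that, with $r=\dim W_i$, the endomorphism $\Lambda^r\pi_i$ of $\Lambda^r V$ is again an idempotent whose one-dimensional image is the Pl\"ucker line $\Lambda^r W_i$; its entries in a flat trivialization are polynomials in those of $\pi_i$, hence again of the shape $P/A^k$, so applying $\Lambda^r\pi_i$ to a fixed flat section of $\Lambda^r V$ yields, near a generic point, a nowhere-vanishing section of $\Lambda^r W_i$ of this same form. That is the assertion that $W_i$ varies polynomially; restricting back to $\cM$ and relabeling finishes the argument.

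The step I expect to be the main obstacle is the first one: pinning down that the permutation of the $W_i$ really is governed by a representation of $\Gamma$, so that a finite cover genuinely trivializes it and turns each $\pi_i$ into an honest global $\GL_2^+(\bR)$-equivariant section of a tensor construction on $H^1$. Once that reduction is set up, the rest is routine — the invariant complement comes for free from reductivity, and the passage from a polynomially-varying idempotent to its image is elementary.
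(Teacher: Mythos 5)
Your steps 2 through 4 are sound: once each $W_i$ is individually $\GL_2^+(\bR)$-equivariant, reductivity of the hull (Remark~\ref{rmk:alg_hull}(iii)) does give an equivariant idempotent $\pi_i\in\End(V)$ with image $W_i$, Theorem~\ref{thm:phi_polynomial} applies to $\pi_i$ as a section of a tensor construction on $H^1$, and the passage via $\Lambda^r\pi_i$ to the Pl\"ucker line is correct. That would be a perfectly good alternative to the paper's route at that point.

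The gap is in your step 1, and you even flag it yourself as the main obstacle. Your claim that ``the connected group $\GL_2^+(\bR)$ cannot move one $W_i$ continuously onto another, so the permutation is locally constant'' silently assumes that the assignment $m\mapsto W_i(m)$, or at least the permutation cocycle $\sigma(g,m)\in S_n$, is \emph{continuous} in $g$. But the $W_i$ are only \emph{measurable} subbundles to begin with: for fixed $m$ you know $g\cdot W_i(m)\in\{W_j(gm)\}$ for a.e.\ $m$ and all $g$, and while $g\mapsto g\cdot W_i(m)$ is continuous (parallel transport), $g\mapsto W_j(gm)$ is a priori only measurable. A measurable cocycle over a connected group with values in a finite group need not be locally constant, and connectedness alone does not rule out a nontrivial measurable permutation. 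So you cannot yet conclude that the permutation is governed by a representation $\Gamma\to S_n$, and the finite cover you want to pass to is not available at this stage. The only natural way to obtain the required regularity is to feed something $\GL_2^+(\bR)$-equivariant into Theorem~\ref{thm:phi_polynomial} \emph{before} trying to separate the $W_i$; but any equivariant object you can build from the collection $\{W_i\}$ without separating them (the union, its ideal) is exactly what the paper uses.

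This is precisely how the paper's proof sidesteps the issue. Instead of trying to make each $W_i$ invariant on a cover, it works with the single invariant object $X=\bigcup_i\bP(W_i)\subset\bP(V)$: its fiberwise homogeneous ideal components $I^k$ are honest $\GL_2^+(\bR)$-invariant measurable subbundles of tensor powers of $V^\vee$, to which Theorem~\ref{thm:phi_polynomial} (via the Pl\"ucker reduction) applies directly, so each $I^k$ and hence $X$ varies polynomially; one then reads off each $W_i$ locally as an irreducible component of $X$ and renumbers. Once you have that, the local constancy of the permutation you wanted in step 1 follows as a corollary, rather than being an input. If you want to keep the idempotent-and-Pl\"ucker structure of your steps 2--4, you would need to insert the invariant-union argument first to upgrade measurability to continuity; at that point the idempotent detour is no longer needed, since polynomiality of $X$ already gives polynomiality of each $W_i$ locally.
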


Note that the numbering of the bundles can be pathological -- take any (measurable!) function from $\cM$ to permutations of the indices and relabel the bundles.
Part of the statement is that there is locally a relabeling for which the bundles vary polynomially.

\begin{proof}
	Consider the projectivizations $\bP(W_i)\subset \bP(V)$.
	Their union $X:=\cup \bP(W_i)$ is in each fiber a collection of linear spaces, i.e. an algebraic variety.
	Let $I^\bullet$ be (fiberwise) ideal of homogeneous polynomials which vanish on $X$.
	
	Note that each homogeneous component $I^k$ of $I^\bullet$ is a subbundle of some tensor power of the dual $V^\vee$.
	Therefore, each homogeneous component varies polynomially in period coordinates.
	
	Next, any sufficiently high homogeneous component of $I^\bullet$ determines the variety $X$.
	Therefore each of the bundles $W_i$ must (locally) vary polynomially in period coordinates, after an appropriate relabeling.
\end{proof}


\bibliographystyle{amsalpha}
\bibliography{AlgHull}
\end{document}